\newcommand{\veps}{\varepsilon}
\numberwithin{equation}{section}
\newtheorem{thm}{Theorem}[section]
\newtheorem{prop}[thm]{Proposition}
\newtheorem{lem}[thm]{Lemma}
\newtheorem{coro}[thm]{Corollary}
\newtheorem{rmk}[thm]{Remark}
\newtheorem{mainthm}{Theorem}
\begin{document}

\title{The Brjuno functions of the by-excess, odd, even and odd-odd continued fractions and their regularity properties}

\author{Seul Bee Lee}
\address{Center for Geometry and Physics, Institute for Basic Science (IBS), Pohang 37673, Korea}
\email{seulbee.lee@ibs.re.kr}

\author{Stefano Marmi}
\address{Scuola Normale Superiore, Palazzo della Carovana, Piazza dei Cavalieri 7, 56126 Pisa, Italy}
\email{stefano.marmi@sns.it}


\subjclass[2020]{37F50, 37E05, 11J70, 11K50}
\keywords{Brjuno function; Brjuno condition; H\"older continuity; continued fractions}
\thanks{The first author acknowledges the support of the Centro di Ricerca Matematica Ennio de
Giorgi and the support of the Institute for Basic Science (IBS-R003-D1). All authors acknowledge the support of UniCredit Bank R\&D group for financial support through
the ‘Dynamics and Information Theory Institute’ at the Scuola Normale Superiore. The second author acknowledges the support through the PRIN Grant ``Regular and stochastic behaviour in
dynamical systems” (PRIN 2017S35EHN)}

\maketitle

\begin{abstract}
The Brjuno function was introduced by Yoccoz to study the linearizability of holomorphic germs and other one-dimensional small divisor problems.
The Brjuno functions associated with various continued fractions including the by-excess continued fraction were subsequently investigated: it was conjectured that the difference between the classical Brjuno function and the 
even part of the Brjuno function associated with the by-excess continued fraction extends to a H\"older continuous function of the whole real line.
In this paper, we prove this conjecture and we extend its validity to the more general case of Brjuno functions with positive exponents.

Moreover, we study the Brjuno functions associated to the odd and even continued fractions introduced by Schweiger. We show that they belong to all $L^p$ spaces, $p\ge1$. We prove that the Brjuno function associated to the odd continued fraction differs from the classical Brjuno function by a H\"older continuous function. On the other hand, the Brjuno function associated to the even continued fraction differs from the classical Brjuno function by a sum of a H\"older continuous function and a Brjuno-type function associated to the odd-odd continued fraction, introduced in the study of the best approximations of the form odd/odd.
\end{abstract}

\section{Introduction}\label{Sec:Intro}
Let $x\in\mathbb{R}\setminus\mathbb{Q}$: the \emph{regular continued fraction (RCF) expansion} of $x$ is defined by
\begin{equation}\label{eq:RCF}
x = a_0+\dfrac{1}{a_1+\dfrac{1}{\ddots+\dfrac{}{a_n+\ddots}}} =: [a_0;a_1,a_2,\cdots,a_n,\cdots],
\end{equation}
where $a_0\in \mathbb{Z}$ and $a_n\in\mathbb{N}$ for $n\ge 1$.
The \emph{principal convergents} of $x$ are the finite truncations $P_n(x)/Q_n(x) := [a_0;a_1,\cdots, a_n]$ for $n\ge 0$.
We call $x\in \mathbb{R}$ a \emph{Brjuno number} if $x$ satisfies the \emph{Brjuno condition}:
$\sum_{n=0}^{\infty}\frac{\log (Q_{n+1}(x))}{Q_n(x)} < \infty.$
The Brjuno condition was introduced by A.D. Brjuno \cites{Brj71,Brj72}:
in the case of germs of holomorphic diffeomorphisms
of one complex variable with an indifferent fixed point, extending the fundamental work of C.L. Siegel \cite{Sie42}, 
Brjuno proved that all germs with linear part $\lambda = e^{2\pi x i}$
are linearizable if $x$ is a Brjuno number. 
In 1988, J.-C. Yoccoz \cites{Yoc88, Yoc95} proved that 
this condition is also necessary. 
Similar results hold for the local conjugacy problem of analytic diffeomorphisms of the circle \cite{Yoc02},  for some complex area–preserving maps \cite{Mar90, Dav94, CM22}.
In a somewhat different but closely related context, it is conjectured that the Brjuno conditon is optimal for the existence of real analytic invariant circles in the standard family \cite{Mac88, Mac89, MS92}. See also \cite{BG01, Gen15} for related results.

Yoccoz's work used a rigorous renormalization technique and the $\mathrm{PGL}_2(\mathbb{Z})$ action on $\mathbb{R}\setminus\mathbb{Q}$ plays an important role: he thus found convenient to 
reformulate the Brjuno condition in terms of an arithemtical function, the \emph{Brjuno function}, which satisfies a cocycle
equation \cite{MMY01} under this action, see \cite[Appendix 5]{MMY01}.
The \emph{Gauss map} $G:(0,1]\to [0,1]$ is defined by $G(x) = \{1/x\}$, where $\{t\}$ is the fractional part of $t$. 
Let $x_n = G^n(\{x\})$, $\beta_{n}(x) = x_0x_{1}\cdots x_n$, $n\ge 0$, and $\beta_{-1}\equiv1$.
J.-C. Yoccoz \cite{Yoc88} defined\footnote{In his work Yoccoz actually uses the nearest-integer continued fraction map, which will be denoted $A_{1/2}$ in our work and is introduced below, instead of the Gauss map to define the Brjuno function. However the difference between the two functions is a positive H\"older continuous function which extends to the 
whole real line, as proved in \cite{MMY97}, thus 
they can both be used equivalently for investigating small divisor problems.}
the \emph{Brjuno function} $B:\mathbb{R}\setminus\mathbb{Q} \to \mathbb{R}\cup\{\infty\}$ defined by
\begin{equation*}\label{eq:def Brjuno}
B(x) = - \sum_{n=0}^\infty \beta_{n-1}(x)\log{x_n},
\end{equation*}
see Figure~\ref{fig:B} for the graph.
\begin{figure}
\includegraphics[width=7cm]{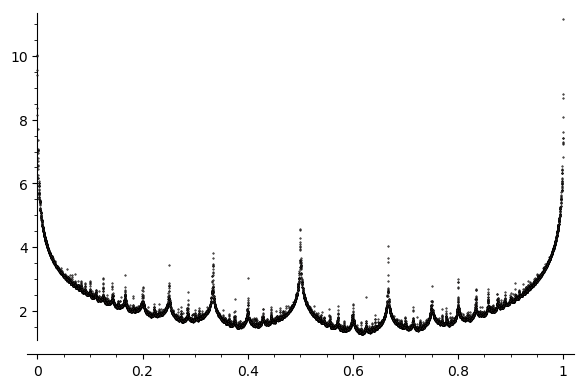} 
\caption{Numerical computation 
of Yoccoz's Brjuno function associated to the Gauss map at $10^4$ uniformly distributed random points in the interval $(0,1)$.}
\label{fig:B}
\end{figure}
The set of Brjuno numbers can be characterized as the set where $B(x)<\infty$. 
Indeed, there is a positive constant $C$ such that
\begin{equation}\label{eq:B-logq/q}
\left|B(x)-\sum\limits_{n=0}^{\infty}\dfrac{\log (Q_{n+1}(x))}{Q_n(x)}\right|<C.
\end{equation}

Yoccoz proved that
$B(x) = -\log x + x B(1/x)$ on $(0,1)\setminus\mathbb{Q}$ and $B(x)=B(x+1)$
thus
the set of Brjuno numbers is invariant under the modular group $\mathrm{PSL}_{2}(\mathbb{Z})$.
In his work on the linearizability of quadratic polynomials, Yoccoz showed that $\Upsilon(x):= B(x)+\log r(x)$ defined on the set of Brjuno numbers is uniformly bounded, where $r(x)$ is the convergence radius of the linearization of the quadratic polynomial $P_\lambda (z)=\lambda z+z^2$ with $\lambda = e^{2\pi x i}$ \cite{Yoc95,BC04}.
With the second author, Moussa and Yoccoz conjectured that $\Upsilon$ is $1/2$-H\"{o}lder continuous \cite{MMY97}, a fact 
prompted also by the numerical study in \cite{Mar90}.
It was later proved that the interpolation function $\Upsilon$ extends continuously to $\mathbb{R}$ \cite{BC06} and that 
if $\eta>1/2$, $\Upsilon$ cannot be $\eta$-Hölder continuous \cite{Che08}.
The restriction $\Upsilon$ to the set of the high type numbers is $1/2$-H\"{o}lder continuous \cite{CC15}, where the high type numbers are the real numbers whose nearest-integer continued fraction partial quotients are at least $N_0$, for some large fixed $N_0\ge 2$.

The interplay between the H\"older regularity properties and various actions of (subgroups of) the modular group, already 
at the heart of the works \cite{MMY97, MMY01, MMY06, LMNN10} will be our object of the investigation.
We will discuss the $L^p$ and the H\"older regularity properties of the difference between $B$ and the Brjuno functions associated with the by-excess, odd and even continued fractions regarded as classical continued fractions by Vall\'ee \cite{Val03,Val06}. 
These three Brjuno functions satisfy cocycle equations under actions of $\mathrm{PSL}_2(\mathbb{Z})$, the index $2$-, and an index $3$-congruence subgroups of level $2$, respectively, see Appendix~\ref{ap:cocycle} for a proof of this 
fact.
We will give further motivations for our research and state the main theorems in the following subsections.

\subsection{Brjuno functions associated with the by-excess continued fractions} 
Let $0\le \alpha\le 1$. The $\alpha$-continued fraction map is given by
\begin{equation*}\label{eq:alpha}
A_{\alpha} (x) = |1/x - \lfloor 1/x + 1 - \alpha \rfloor|~ \text{for} ~x\in(0,\bar{\alpha}],
\end{equation*}
where $\bar{\alpha} = \max\{\alpha, 1-\alpha\}$.
This one-parameter family of maps contains the the regular, the nearest-integer and the by-excess continued fractions as three particular cases, corresponding respectively to $\alpha=1,1/2,0$.
Nakada 
\footnote{In \cite{Nak81}, the $\alpha$-continued fraction map is defined by $f_\alpha(x) = |1/x| - \lfloor |1/x|+1-\alpha\rfloor$ for $x\not=0$, $x\in[\alpha-1,\alpha)$.} 
\cite{Nak81} investigated metrical properties of the $\alpha$-continued fractions and explicitly computed the absolutely continuous invariant measure for $\frac{1}{2}\le\alpha\le 1$.
The dependence on the parameter $\alpha$ of the metric entropy of the map $A_\alpha$ has been object of several studies in \cite{LM08, CMPT10, CT13}.

The \emph{$(\alpha,\nu)$-Brjuno function} is defined by 
\begin{equation}\label{eq:B_al}
B_{\alpha,\nu}(x) = - \sum_{n=0}^\infty (\beta_{\alpha,n-1}(x))^{\nu}\log A^n_\alpha(x) \text{ for } x\in\mathbb{R}\setminus\mathbb{Q},
\end{equation}
where $x_{\alpha,0} = |x-\lfloor x+1-\bar{\alpha}\rfloor|$
\footnote{We correct the initial setup of $x_{\alpha,0} = |x-\lfloor x+1-{\alpha}\rfloor|$ in \cite{LMNN10} to $x_{\alpha,0} = |x-\lfloor x+1-\bar{\alpha}\rfloor|$ since $x_{\alpha,0}$ should be $x$ for $x\in(0,\overline{\alpha}]$.}
 and
$\beta_{\alpha,n}(x) = x_0A_\alpha(x_0)\cdots A_\alpha^n(x_0)$, $n\ge 0$ and $\beta_{\alpha,-1}\equiv1$.
From the definition it follows easily that $B_{\alpha,\nu}$ satisfies
\begin{equation}\label{eq:fteq}
B_{\alpha,\nu}(x) = -\log x + x^\nu B_{\alpha,\nu} (1/x)  \text{ for } x\in(0,\bar{\alpha}){\setminus \mathbb{Q}},
\end{equation}
$B_{\alpha,\nu}(x) = B_{\alpha,\nu}(-x)$ for $x\in(0,1-\bar{\alpha})\setminus \mathbb{Q}$ and
$B_{\alpha,\nu}(x) = B_{\alpha,\nu}(x+1)$ for $x\in \mathbb{R}\setminus \mathbb{Q}.$
The difference $B-B_{\alpha,1}$ is uniformly bounded for all $\alpha\in[1/2,1]$ and, as we have recalled several times, for $\alpha=1/2$ (the nearest-integer continued fraction), the difference $B-B_{1/2, 1}$ is $1/2$-H\"{o}lder continuous.

These (generalized) Brjuno functions with a positive real exponent $\nu$, which will be called  \emph{$\nu$-Brjuno functions}. In our recent study with Petrykiewicz and Schindler \cite{LMPS}, we investigated the BMO property of the $\nu$-Brjuno functions\footnote{We called the $\nu$-Brjuno function the \emph{$k$-Brjuno function} in \cite{LMPS}.} associated with the $\alpha$-continued fractions motivated by the study of (quasi) modular forms in \cite{Pet14,Pet17}.

The $\alpha$-Brjuno functions for $\alpha\in [0,1/2)$ are explored in \cite{LMNN10}, where it's proved that $B_{\alpha,\nu}-B_{\alpha,\nu}$ is uniformly bounded for $\alpha\in(0,1/2)$.
Moreover, the authors studied the Brjuno function $B_{0,1}$ associated with the by-excess continued fraction ($\alpha=0$).
\renewcommand\thesubfigure{\arabic{figure}\alph{subfigure}} 
\captionsetup[figure]{labelformat=empty}
\captionsetup[subfigure]{labelfont = rm, textfont = normalfont, singlelinecheck = on} 
\begin{figure}
\begin{subfigure}{0.49\textwidth}
\centering
\includegraphics[width=7cm]{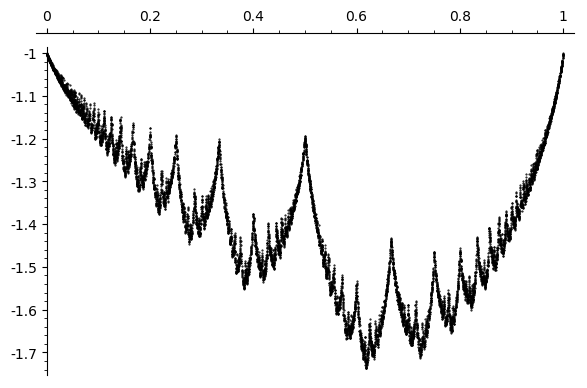}
\caption{The graph of $B(x) - [B_{0,1}(x)+B_{0,1}(-x)]$.}
\label{fig:dif.RB}
\end{subfigure}
\begin{subfigure}{0.49\textwidth}
\centering
\includegraphics[width=7cm]{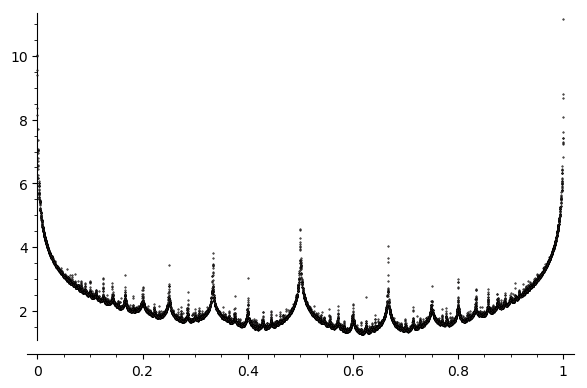}
\caption{The graph of $B_{odd,1}$.}
\label{fig:BO}
\end{subfigure}
\end{figure}
The $(0,1)$-Brjuno function $B_{0,1}$, which is called the \emph{semi-Brjuno function},  
has a different property: indeed one can prove that
\begin{equation}\label{eq:dif.B1}
B(x) - [B_{0,1}(x)+B_{0,1}(-x)]
\end{equation}
is uniformly bounded.
The corresponding continued fraction to $A_0$ is called the \emph{by-excess continued fraction} which has the expansion given by
$$x = \dfrac{1}{a^\ast_1-\dfrac{1}{\ddots-\dfrac{1}{a^\ast_n-\ddots}}} \quad \text{ for }x\in(0,1],$$
where $a^\ast_{n} = \lceil 1/A_0^{n-1}(x) \rceil$.
In \cite{LMNN10}, from a numerical computation of $B(x)-[B_{0,1}(x)+B_{0,1}(-x)]$, see also Figure~\subref{fig:dif.RB},
and by analogy with what had been proved for the difference between $B$ and $B_{1/2,1}$, it was conjectured that $B(x)-[B_{0,1}(x)+B_{0,1}(-x)]$ is $1/2$-H\"{o}lder continuous.

In this paper (see Section~\ref{sec:BF}) we will prove this conjecture. Moreover we will also prove an extension of this conjecture to all the $(0,\nu)$-Brjuno functions for $0<\nu\le 2$:

\begin{mainthm}\label{main1}
For $0<\nu\le 2$, the function $B_{1,\nu}(x)-\left[B_{0,\nu}(x)+B_{0,\nu}(-x)\right]$
can be extended from $\mathbb{R}\setminus \mathbb{Q}$ to $\mathbb{R}$ a $\nu/2$-H\"{o}lder continuous function.
\end{mainthm}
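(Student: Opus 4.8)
The plan is to compare the three Brjuno functions term by term using the functional equation \eqref{eq:fteq} and, crucially, to make precise the elementary identity relating one step of the Gauss map to two (or more) steps of the by-excess map. Recall that if $x\in(0,1)$ has RCF digit $a_1=\lfloor 1/x\rfloor$, then the by-excess map $A_0(y)=1-\{1/y\}$ applied $a_1-1$ times to $x$ (more precisely, to the appropriate branch) passes through the intermediate values $\tfrac{1}{a_1-x},\ldots$ before reaching $G(x)$; in particular $A_0^{\,a_1-1}(x)$ equals, up to the sign convention folded into the definition of $B_{0,\nu}$, either $G(x)$ or $1-G(x)$. First I would set up the bookkeeping: write the orbit of $x$ under $G$ as $x=x_0,x_1,x_2,\dots$ with RCF digits $a_1,a_2,\dots$, and observe that the by-excess orbit of $x$ is obtained by inserting, between $x_{k}$ and $x_{k+1}$, a finite ``staircase'' of length $a_{k+1}-1$ consisting of the points $\tfrac{1}{j+x_{k+1}}$ for $j=1,\dots,a_{k+1}-1$ (this is exactly the classical fact that BCF refines RCF). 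The products $\beta$ telescope compatibly along this refinement.

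With this dictionary in hand, the next step is to write $B_{0,\nu}(x)+B_{0,\nu}(-x)$ as a sum over the $G$-orbit of $x$ of a ``block contribution'' $\mathcal{B}_\nu(x_k,a_{k+1})$ weighted by $\beta_{k-1}(x)^\nu$, where $\mathcal{B}_\nu(t,a)$ is an explicit finite sum coming from the staircase of length $a-1$ sitting above $t$ together with its mirror image. Then $B_{1,\nu}(x)=-\sum_k \beta_{k-1}(x)^\nu\log x_k$ is, up to the leading convention, a sum over the same orbit with block contribution $-\log x_k$. So the difference is
\begin{equation*}
B_{1,\nu}(x)-\bigl[B_{0,\nu}(x)+B_{0,\nu}(-x)\bigr]=\sum_{k\ge0}\beta_{k-1}(x)^\nu\,\Phi_\nu\!\left(x_k,a_{k+1},x_{k+1}\right),
\end{equation*}
where $\Phi_\nu$ is an explicit function measuring the defect between one $G$-step and the corresponding block of by-excess steps. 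The key analytic input is a bound of the form $|\Phi_\nu(t,a,s)|\le C$ uniformly (this recovers the known boundedness for $\nu=1$) together with a Hölder-type estimate: when one compares this expression at two nearby irrationals $x,x'$ that share the first $k$ RCF digits, the partial sums up to level $k$ agree, and the tails are controlled by $\beta_{k-1}^\nu\lesssim Q_k^{-\nu}$, while $|x-x'|\gtrsim Q_k^{-2}$ on such a cylinder; matching exponents forces the Hölder exponent $\nu/2$. The constraint $\nu\le2$ enters precisely here, since for $\nu>2$ the tail bound $Q_k^{-\nu}$ decays faster than $|x-x'|^{\nu/2}\sim Q_k^{-\nu}$ would require… wait—rather, the point is that for $\nu\le2$ one has $Q_k^{-\nu}=(Q_k^{-2})^{\nu/2}\le|x-x'|^{\nu/2}$ up to constants, which is exactly the Hölder bound; the borderline $\nu=2$ is where $\nu/2=1$ and one must be slightly careful that the estimate does not degenerate.

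The main obstacle, I expect, is twofold. First, analyzing $\Phi_\nu(t,a,s)$ when $a$ is large: the by-excess staircase has $a-1$ terms, each individually only $O(1/j^2)$ or so in the relevant weight, and one must sum these against the logarithmic factors and show the total telescopes against $-\log t$ with a bounded, Hölder-in-$t$ remainder; the presence of the indifferent fixed point of $A_0$ at $1$ means the first few staircase terms are close to $1$ and contribute $\log$-small amounts, so the series converges but requires an Abel-summation or integral-comparison argument. Second, the gluing across cylinders: to get genuine Hölder continuity on all of $\bR$ (not just a modulus-of-continuity estimate along cylinder endpoints) I would fix $x,x'$ irrational, let $k$ be the largest index with $a_i(x)=a_i(x')$ for $i\le k$, split the difference into the agreeing head, the two tails, and the single ``transition'' block at level $k+1$ (where the digits first differ), and bound each piece: the tails by the uniform bound on $\Phi_\nu$ times $\sum_{j\ge k}\beta_{j-1}^\nu$, the transition block by the local Hölder estimate for $\Phi_\nu$ in its arguments together with the two-sided bound $Q_{k+1}^{-2}\lesssim|x-x'|\lesssim Q_k^{-2}$ (standard for the RCF). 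Assembling these, and checking that the resulting function agrees at rationals from both sides so that it genuinely extends continuously to $\bR$, completes the argument. Throughout I would lean on \eqref{eq:fteq} and \eqref{eq:B-logq/q}-type comparisons to replace $B$ by $\sum\log Q_{n+1}/Q_n$ whenever that is more convenient for the estimates.
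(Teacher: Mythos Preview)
Your overall strategy—decompose the difference along the Gauss orbit and run a cylinder-by-cylinder H\"older argument—is reasonable in spirit, but two steps you take for granted fail as stated. First, the claim that ``the partial sums up to level $k$ agree'' when $x,x'$ share the first $k$ RCF digits is false: the term at level $j$ involves $x_j=G^j(x)$ and $\beta_{j-1}(x)$, both of which depend on the entire tail of $x$, not just on $a_1,\dots,a_j$, so even the $j=0$ term differs for $x\neq x'$. You would need a H\"older estimate on every head term, propagated through the expanding dynamics, not an equality. Second, the lower bound $|x-x'|\gtrsim Q_{k+1}^{-2}$ (or $Q_k^{-2}$) when the digits first differ at level $k+1$ is false in general: if $x_k,x_k'$ sit on opposite sides of a boundary $1/m$ at distance $\varepsilon$, then $a_{k+1}(x)\neq a_{k+1}(x')$ yet $|x-x'|\asymp Q_k^{-2}\varepsilon$ can be arbitrarily small compared with $Q_k^{-2}$. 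The adjacent-cylinder case must be handled separately (compare the splitting-number machinery in Remark~\ref{rk:sp.n.O} for the OCF). A third, smaller issue: the ``staircase'' dictionary you describe is really between the $A_0$-orbit of $1-x$ and the Gauss orbit of $x$; synchronising \emph{both} $B_{0,\nu}(x)$ and $B_{0,\nu}(1-x)$ with the $G$-orbit into a single block function $\Phi_\nu$ of three scalar arguments is not as straightforward as your sketch suggests.

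The paper's proof takes a different and cleaner route that sidesteps all of this. Instead of comparing $B_{1,\nu}$ with $B_{0,\nu}(x)+B_{0,\nu}(-x)$ directly, it first replaces $B_{1,\nu}$ by the nearest-integer Brjuno function $B_{1/2,\nu}$ (their difference is already $\nu/2$-H\"older by Theorem~\ref{MMY4.6}), and then observes that $\Delta:=B_{1/2,\nu}-[B_{0,\nu}+B_{0,\nu}(1-\cdot)]$ satisfies the functional equation $(1-T_{1/2,\nu})\Delta=\Phi$ for a \emph{single-variable} function $\Phi(x)=x^\nu B_{0,\nu}(1-A_0(x))-B_{0,\nu}(1-x)$ on $[0,1/2]$. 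The entire problem thus reduces to showing $\Phi\in\cC^\eta_{[0,1/2]}$ for some $\eta>\nu/2$, after which the operator bound of Proposition~\ref{MMY4.4} (namely $(1-T_{1/2,\nu})^{-1}:\cC^\eta\to\cC^{\nu/2}$) finishes immediately; that operator bound is precisely what packages the delicate head/tail/adjacent-cylinder bookkeeping you were attempting by hand. The explicit closed form for $\Phi$ comes from the jump transformation of $A_0$ together with the Kraaikamp--Nakada identity $1-G^2(x)=F_j(1-x)$, and its H\"older regularity is then an elementary (if slightly involved) one-variable computation.
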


\subsection{Brjuno functions of the odd continued fraction and the even continued fraction}
Schweiger \cites{Sch82, Sch84} first investigated continued fractions with the odd partial quotients and the even partial quotients, which are called \emph{the odd continued fraction (OCF)} and \emph{the even continued fraction (ECF)}, respectively. 
We partition $[0,1]$ into $I_n:=\left[\frac{1}{n+1},\frac{1}{n}\right], \text{ for }n\in\mathbb{N}.$
The continued fraction maps of the ECF and the OCF are defined by
\begin{equation}\label{eq:AOE}
A_{odd}(x) = 
\begin{cases}
\frac{1}{x}- (2k-1), \text{ if }x\in I_{2k-1},\\
(2k+1) - \frac{1}{x}, \text{ if } x\in I_{2k},
\end{cases}
\text{and }
A_{even}(x) = 
 \begin{cases}
2k - \frac{1}{x},  \text{ if } x\in I_{2k-1},\\
\frac{1}{x} - 2k,  \text{ if } x\in I_{2k},
 \end{cases}
\end{equation}
for $k\in \mathbb{N}$, see Figure~\ref{fig:AO} and \subref{fig:AE} for their graphs.
Analogously to what we did for defining the  $\nu$-Brjuno function associated with the $\alpha$-continued fraction maps, we denote respectively with $B_{odd,\nu}$ and $B_{even,\nu}$
 the $\nu$-Brjuno functions associated with the OCF and the ECF, defined by
\begin{equation}\label{eq:BeBo}
B_{odd,\nu}(x) = - \sum_{n=0}^\infty \beta_{o,n-1}^\nu(x_{o,0})\log {x_{o,n}}
\quad\text{ and }\quad
B_{even,\nu}(x) = - \sum_{n=0}^\infty \beta_{e,n-1}^\nu(x_{e,0})\log {x_{e,n}},
\end{equation}
respectively, 
by setting $x_{o,0} = x_{e,0} = \|x\|_{2\mathbb{Z}}:=\min_{n\in\mathbb{Z}}\{x,2n\}$,
\begin{equation}\label{eq:set.x_n}
x_{o,n}:= A_{odd}^n(x_{o,0}), ~
 x_{e,n}:= A_{even}^n(x_{e,0}),
 \end{equation}
$ \beta_{o,n}(x) := \prod_{i=0}^n x_{o,i}$, and $\beta_{e,n}(x) :=\prod_{i=0}^n x_{e,i}, ~n\ge 0$
with $\beta_{o,-1}(x) = \beta_{e,-1}(x)=1$,
see Figure~\subref{fig:BO} and \ref{fig:EOO} for their graphs.
By definition, both $B_{odd,\nu}$ and $B_{even,\nu}$ are even $2\mathbb{Z}$-periodic functions, i.e. they both satisfy
\begin{equation}\label{eq:even2Z}
f(x) = f(-x)\text{ for all }x\in(0,1)\setminus\mathbb{Q} \text{ and }f(x)=f(x+2) \text{ for all }x\in\mathbb{R}\setminus \mathbb{Q}.
\end{equation}
\begin{figure}
$\begin{matrix}
\includegraphics[width=7cm]{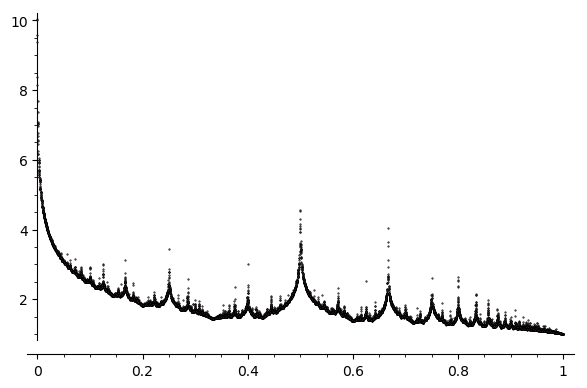} 
&
\includegraphics[width=7cm]{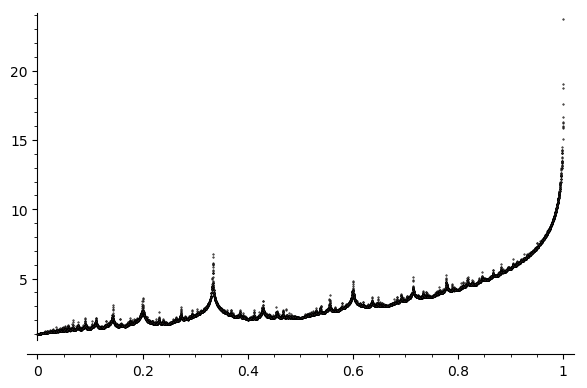}
\\
\includegraphics[width=7cm]{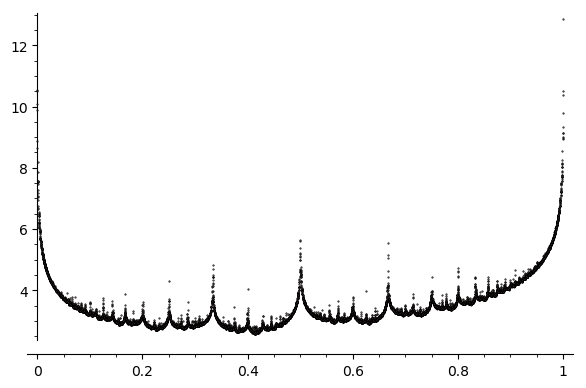} 
&
\includegraphics[width=7cm]{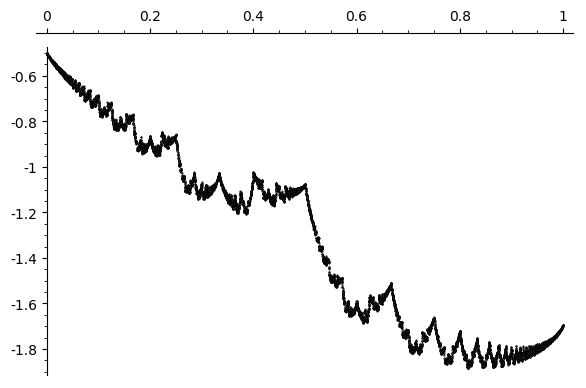}
\end{matrix}$
\caption{The graphs of $B_{even,1}$ (upper-left), $B_{oo,1}$ (upper-right), $B_{even,1}+\frac{1}{2}B_{oo,1}$ (lower-left) and $B-[B_{even,1}+\frac12 B_{oo,1}]$ (lower-right).}
\label{fig:EOO}
\end{figure}

It is well known that there is a very strict connection
between the RCF and the geodesic flow on the modular surface $\mathbb{H}/\mathrm{SL}_2(\mathbb{Z})$  \cite{Ser85}.
Boca and Merriman established connections of the OCF with the geodesic flow on $\mathbb{H}/\Gamma$, and of the ECF with that on $\mathbb{H}/\Theta$ \cite{BM18}, where $\Gamma$ and $\Theta$ are the two 
subgroups of the modular group given by
\begin{align}
&\label{eq:Gem} \Gamma =\left\{\begin{pmatrix}a&b\\c&d\end{pmatrix}\in\mathrm{SL}_2(\mathbb{Z}):\begin{pmatrix}a&b\\c&d\end{pmatrix}\equiv 
\begin{pmatrix}1&0\\0&1\end{pmatrix}, \begin{pmatrix}1&-1\\1&0\end{pmatrix}\text{ or } 
\begin{pmatrix}0&-1\\1&1\end{pmatrix} \text{ (mod 2)}\right\},\\
&\label{eq:The} \Theta =\left \{\begin{pmatrix}a&b\\c&d\end{pmatrix}\in\mathrm{SL}_2(\mathbb{Z}): \begin{pmatrix}a&b\\c&d\end{pmatrix}\equiv \begin{pmatrix}1&0\\0&1\end{pmatrix} \text{ or }\begin{pmatrix}0&-1\\1&0\end{pmatrix}\text{ (mod }2\text{)}\right\}.
\end{align}
One of our motivations for studying the OCF- and ECF-Brjuno functions is came from the study of regularity properties of automorphic forms under the action of congruence subgroups $\Gamma$ and $\Theta$ of level $2$, respectively.
The OCF- and ECF Brjuno functions satisfy \eqref{eq:even2Z} and
\begin{equation}\label{eq:fun.o}
B_{odd,\nu}(x) = - \log x + x^\nu B_{odd,\nu}\left(1-1/x\right) \text{ for all } x\in (0,1)\setminus\mathbb{Q},
\end{equation}
\begin{equation}\label{eq:fun.e}
B_{even,\nu}(x) = - \log x + x^\nu B_{even,\nu}(-1/x) \text{ for all } x\in (0,1)\setminus \mathbb{Q}.
\end{equation}
With these functional equations, the OCF-, ECF-Brjuno function can be interpreted as cocycles under the actions of $\Gamma\sqcup\left(\begin{smallmatrix}-1&0\\0&1\end{smallmatrix}\right)\Gamma$, $\Theta\sqcup\left(\begin{smallmatrix}-1&0\\0&1\end{smallmatrix}\right)\Theta$, respectively, see Appendix~\ref{ap:cocycle} for the details.

We prove a H\"older regularity property of $B_{odd,\nu}$  which shows how the difference between the 
classical Brjuno function and $B_{odd,\nu}$ is as regular as that between the classical Brjuno function and the semi-Brjuno function:
\begin{mainthm}\label{main2}
For $\nu>0$, the function $B_{1,\nu}(x) - B_{odd,\nu}(x)$ is uniformly bounded.
Moreover, for $0<\nu\le 2$, $B_{1,\nu}(x) - B_{odd,\nu}(x)$ can be extended to a $\nu/2$-H\"{o}lder continuous function on $\mathbb{R}$.
\end{mainthm}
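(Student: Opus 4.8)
The plan is to compare the OCF orbit of an irrational $x\in(0,1)$ with its RCF orbit termwise, exploiting the fact that the OCF map $A_o$ is an \emph{acceleration/insertion variant} of the Gauss map: on $I_{2k-1}$ it does one RCF step (producing partial quotient $2k-1$), while on $I_{2k}$ it does a step that, in RCF terms, corresponds to the substitution $2k \mapsto (2k-1)+1/(1+\cdots)$, i.e.\ an extra $+1$ gets absorbed. Concretely, I would first establish a \emph{dictionary lemma}: the OCF convergents $p^o_n/q^o_n$ form a subsequence of the RCF convergents (possibly with intermediate mediants), and there is a strictly increasing $n\mapsto m(n)$ with $q^o_n \asymp Q_{m(n)}$ and $\beta^o_{n-1}(x) \asymp Q_{m(n)}^{-1}$, up to multiplicative constants independent of $x$; moreover each OCF step advances the RCF index by either $1$ or $2$. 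This is the combinatorial heart and I would prove it by induction on $n$, tracking the matrix product $\mtxd{a_1}\mtxd{a_2}\cdots$ against the corresponding OCF $2\times 2$ matrices.

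Granting the dictionary, the \emph{uniform boundedness} of $B_{1,\nu}-B_{o,\nu}$ is then a matter of comparing two series with comparable tails. Using the functional equation~\eqref{eq:fteq} (and its obvious OCF analogue $B_{o,\nu}(x)=-\log x + x^\nu B_{o,\nu}(A_o(x))$), each OCF term $-(\beta^o_{n-1})^\nu\log x^o_n$ either equals the matching RCF term $-(\beta^{RCF}_{m-1})^\nu\log x_m$ up to a bounded error, or, on the $I_{2k}$ branches, differs from a \emph{pair} of consecutive RCF terms by a bounded amount — here one uses $|\log x^o_n - \log x_m|\le C$ on the relevant branch together with $\beta^o_{n-1}\asymp\beta^{RCF}_{m(n)-1}$ and the elementary estimate $\sum_{n}(\beta^o_{n-1})^\nu<\infty$ with a bound depending only on $\nu$ (this last uses $\beta^o_{n-1}\le C r^n$ for a uniform $r<1$, which follows from $A_o$ being uniformly expanding away from $0$, or more precisely from $q^o_{n}\ge F_n$-type growth of the OCF denominators). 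Summing the termwise bounds gives $|B_{1,\nu}(x)-B_{o,\nu}(x)|\le C(\nu)$ for all irrational $x$, which is the first assertion.

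For the \emph{H\"older} statement when $0<\nu\le 2$, the strategy is the one already used for Theorem~\ref{main1}: one shows the difference $\Phi(x):=B_{1,\nu}(x)-B_{o,\nu}(x)$, a priori defined and bounded on $\bR\setminus\bQ$, extends $\nu/2$-H\"older-continuously across the rationals. I would reduce to controlling the oscillation of $\Phi$ on each RCF (or OCF) cylinder: on a cylinder $\Delta$ of denominator $q$ one has $|\Delta|\asymp q^{-2}$, and using the functional equations both $B_{1,\nu}$ and $B_{o,\nu}$ decompose as (a finite explicit sum of $-\log$ terms weighted by $\beta^\nu\asymp q^{-\nu}$) plus $q^{-\nu}$ times a bounded remainder; hence the oscillation of $\Phi$ on $\Delta$ is $O(q^{-\nu}) = O(|\Delta|^{\nu/2})$. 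Combining this cylinder-by-cylinder estimate with the uniform bound and a standard telescoping/dyadic argument over the nested cylinder structure (as in \cite{MMY97} and in the proof of Theorem~\ref{main1}) yields $|\Phi(x)-\Phi(y)|\le C(\nu)\,|x-y|^{\nu/2}$, and the restriction $\nu\le 2$ enters exactly because $q^{-\nu}\le |\Delta|^{\nu/2}$ needs $|\Delta|\asymp q^{-2}$, and because for $\nu>2$ the exponent would exceed what the $\log$-singularities allow (the same obstruction as in the by-excess case).

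I expect the \textbf{main obstacle} to be the dictionary lemma on the $I_{2k}$ branches: tracking how the OCF "insertion" of a $+1$ interacts with the $\beta^o_n$ products requires care to get constants uniform in $k$ (equivalently, uniform as $x\to 0$), since the OCF map has unbounded partial quotients and its transfer behaviour near $0$ is more delicate than for the NICF. Once that uniformity is in place, both the $L^\infty$ bound and the H\"older estimate follow the template of Theorem~\ref{main1} with only bookkeeping changes.
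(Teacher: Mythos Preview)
Your uniform boundedness sketch is in the same spirit as the paper: the dictionary lemma you propose is essentially Lemma~\ref{le:p/q.O} (due to Hartono--Kraaikamp), and the paper routes the comparison through the intermediate quantity $\sum_n \log q^o_{n+1}/(q^o_n)^\nu$ via Propositions~\ref{pr:logq/qO} and~\ref{pr:logq/qO2}, rather than comparing the two Brjuno series directly. That part is fine.

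The H\"older argument has a real gap. You write that after finitely many applications of the functional equation each Brjuno function becomes ``a finite explicit sum of $-\log$ terms $\ldots$ plus $q^{-\nu}$ times a \emph{bounded remainder}''. But the remainder is $B_{1,\nu}(x_n)$ (resp.\ $B_{o,\nu}(x^o_m)$), which is \emph{not} bounded --- both Brjuno functions are unbounded on every subinterval. The boundedness of the difference $\Phi=B_{1,\nu}-B_{o,\nu}$ does not rescue this, because the RCF tail $\beta_{n-1}^\nu B_{1,\nu}(x_n)$ and the OCF tail $(\beta^o_{m-1})^\nu B_{o,\nu}(x^o_m)$ do not combine into $\beta^\nu\cdot\Phi(\text{something})$: the depths $n$ and $m$, the weights $\beta_{n-1}$ and $\beta^o_{m-1}$, and the arguments $x_n$ and $x^o_m$ do not match (the dictionary gives you $q^o_m\asymp Q_n$, not equality of the $\beta$'s or of the iterated points). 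So the claimed oscillation bound $O(q^{-\nu})$ on a cylinder does not follow, and the telescoping step never gets off the ground.

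The paper's route is quite different and avoids this obstacle entirely. It first passes through the NICF: by Theorem~\ref{MMY4.6}, $B_{1,\nu}-B_{1/2,\nu}\in\cC^{\nu/2}$ is already known, so it suffices to show $B_{o,\nu}-B_{1/2,\nu}\in\cC^{\nu/2}$. This difference is split into its even part $\Delta=B^+_{o,\nu}-B_{1/2,\nu}$ and the odd part $B^-_{o,\nu}$. Proposition~\ref{pr:BO+-} computes explicit functional equations for $B^\pm_{o,\nu}$, from which one reads off that $(1-T_{1/2,\nu})\Delta$ and $(1-T_{o,\nu})B^-_{o,\nu}$ are both equal to the \emph{single explicit function} $\pm\frac{1-(1-x)^\nu}{2}\log\frac{1-x}{x}$, which lies in $\cC^\eta$ for $\nu/2<\eta<\nu$. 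The conclusion then comes from the transfer-operator machinery: Proposition~\ref{MMY4.4} inverts $1-T_{1/2,\nu}$ into $\cC^{\nu/2}$, and the paper develops the OCF analogue (Theorems~\ref{th:op1}--\ref{th:op2}, with the supporting Lemma~\ref{le:MMYlemO} on branch geometry and splitting numbers) to invert $1-T_{o,\nu}$ into $\cC^{\nu/2}$. The point is that one never estimates the oscillation of $\Phi$ directly; one instead identifies a \emph{H\"older-continuous} source term and pushes it through a bounded operator. Your proposal is missing both the even/odd decomposition (which is what makes the source term H\"older rather than merely $L^p$) and the spectral analysis of $T_{o,\nu}$ on H\"older spaces.
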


A further motivation for the study of $B_{even,\nu}$ comes from the surprising role that this function plays in a very classical problem.
In the 1850s Riemann suggested that the function given by the  lacunary Fouries series $S(x)= \sum_{k=1}^\infty \frac{\sin(\pi k^2 x)}{k^2}$ was an example of a continuous but nowhere differentiable function. 
After the classical investigations of Hardy and Littlewood and especially Gerver's \cite{Ger71} and Jaffard's \cite{Jaf96} works, show that
$S$ is only differentiable at rational numbers of the reduced form with odd numerator and denominator.
Itatsu \cite{Ita81} 
reproved the non-differentability and differentiability on the rationals by using a relation between $S$ and the theta function $\theta(x) = \sum_{n\in \mathbb{Z}} \exp(i\pi n^2 x)$, which is an automorphic form of weight $1/2$ under the action of $\Theta$.

More recently, Revoal and Seuret  \cite{RS15} dealt with Dirichlet series 
$$F_{s}(x,t) = \sum_{k=1}^\infty \frac{\mathrm{exp}({i\pi k^2 x + 2\pi i k t})}{k^s}$$ for $s\in(1/2,1]$ as a generalization of $S$.
They proved that the convergence of $F_{s}$ depends on Brjuno-type conditions given by two formulas which relate $F_s(x,t)$ to the Gauss map $G$ and to the even continued fraction map $A_{even}$, see \cite[Theorem 3]{RS15} for the formulas.
The formula relating $F_s$ to $A_{even}$ is much simpler and one can see how the convergence of $F_s(x,t)$ is determined by the finiteness of the even Brjuno function with exponent $1/2$, that is,a 
the function $F_1(x)$ converges if $B_{even,1/2}(x)<\infty$ and $\sum_{n=0}^\infty (xA_{even}(x)\cdots A_{even}^{n-1}(x))^{1/2} < \infty$.
\footnote{
In \cite{RS15}, they stated that  
$F_1(x)$ converges if $\sum_{n=0}^\infty \left|xT(x)\cdots T^{n-1}(x)\right|^{1/2}\log\left(1/|T^{n}(x)|\right)<\infty$ and 
$\sum_{n=0}^\infty |xT(x)\cdots T^{n-1}(x)|^{1/2} < \infty,$ where $T:[-1,1]\setminus\{0\}\to [-1,1]$ defined by $x\mapsto -1/x \text{ mod }2$. 
The statements are equivalent since $|T(|x|)| = A_{even}(x)$. 
We note, moreover, that even the second condition could be reformulated in terms of the formalism of Brjuno function slightly generalizing what shown in \cite{MMY06}.
}

We will show that the $B_{1,\nu}(x)<\infty$ can be described by a combination of $B_{even,\nu}$ and a Brjuno-type function $B_{oo,\nu}$ associated with the odd-odd continued fraction (OOCF), see \eqref{eq:BOO} for the definition.
Kim, Lingmin and the first author \cite{KLL22} defined the OOCF algorithm to study Diophantine approximation properties related to the parities of principal convergents. 
\begin{mainthm}\label{main3}
For any $\nu>0$, the function
$B_{1,\nu}(x) -  \left[B_{even,\nu}(x) + \frac{1}{2}B_{oo,\nu}(x)\right]$
is uniformly bounded.
\end{mainthm}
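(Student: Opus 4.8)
The plan is to compare the three continued fraction algorithms step by step, exploiting the fact that both the even (ECF) and odd-odd (OOCF) algorithms can be realized by \emph{grouping together consecutive steps} of the regular continued fraction (RCF). First I would recall, or re-derive, the combinatorial dictionary between the RCF partial quotients $a_1,a_2,\dots$ of $x\in(0,1)$ and the digits produced by $A_e$ and by the OOCF map: each application of $A_e$ corresponds to either a single RCF step (when the relevant $a_i$ has the right parity) or to a bounded block of RCF steps that ``absorbs'' a run of $1$'s, and similarly for the OOCF. The key quantitative input is that along these groupings the RCF denominators $Q_n$ and the corresponding ECF/OOCF denominators $q^e_m$, $q^{oo}_\ell$ are comparable up to universal multiplicative constants, equivalently the products $\beta_{n-1}$, $\beta^e_{m-1}$, $\beta^{oo}_{\ell-1}$ are comparable along matched indices; this is exactly the kind of estimate used in the excerpt for the $\alpha$-continued fractions and should be available by the same transfer-matrix bookkeeping.

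Next I would use the functional equation \eqref{eq:fteq} for $B_{1,\nu}$ together with the analogous functional equations for $B_{e,\nu}$ and $B_{oo,\nu}$ (each of the form $B_\bullet(x) = -\log x + (\text{first digit factor})^\nu\, B_\bullet(A_\bullet x)$, iterated over a block) to write each of the three functions as a sum over ``blocks'' of RCF steps. Subtracting, the $-\log x_n$ contributions from RCF steps that survive as genuine ECF or OOCF steps cancel against the corresponding terms of $B_{e,\nu}$ and $\tfrac12 B_{oo,\nu}$ — here the factor $\tfrac12$ will appear precisely because an OOCF step of ``length two'' in the RCF carries the square $\beta^{oo}$ weight, so that $\tfrac12 B_{oo,\nu}$ is what matches the leftover RCF terms. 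The remaining terms are: (i) the $-\log x_n$ over the finitely-many (per block) RCF steps that are absorbed, where $x_n$ is bounded away from $0$ and $1$ — say $x_n\in[c,1-c]$ — so $|\log x_n|$ is uniformly bounded and the prefactor $\beta_{n-1}^\nu$ is summable with a geometric rate; and (ii) error terms coming from replacing $\beta^e_{m-1}$, $\beta^{oo}_{\ell-1}$ by $\beta_{n-1}$ at matched indices, each of which is a bounded multiplicative distortion of an already-summable series. Summing these over all blocks gives a bound independent of $x$, which is the assertion; note this step needs no restriction on $\nu$, consistent with the statement ``for any $\nu>0$''.

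The main obstacle I expect is \textbf{organizing the block decomposition cleanly}, in particular handling the interface between consecutive blocks and the indifferent-fixed-point behavior that the ECF inherits (long runs of digit $2$, or equivalently long runs of $1$'s in the RCF, near $0$). One has to check that a long absorbed run contributes a \emph{convergent} geometric-type tail rather than a divergent sum of bounded terms: the point is that the $\beta$-weights decay geometrically along such a run because each RCF step with $x_n$ not too close to $1$ contracts by a definite factor, and the finitely many steps per block with $x_n$ close to $1$ are compensated because the block as a whole ends with a definite contraction. A careful statement is that for a constant $\rho<1$ and all $n$, $\beta_{n+k}\le \rho\,\beta_n$ where $n\mapsto n+k$ is one block, which bounds the total error by a geometric series times $\sup|\log x|$ over the relevant (bounded-away-from-endpoints) range. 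Once this uniform geometric control along blocks is in place, the remaining estimates are routine term-by-term comparisons, and the uniform boundedness follows; I would also double-check the boundary normalizations ($B_\bullet(x)=B_\bullet(1+x)$, evenness) so that the identity holds on all of $\bR\setminus\bQ$ and not merely on $(0,1)\setminus\bQ$.
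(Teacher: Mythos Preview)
Your high-level instinct — reduce to a combinatorial comparison between the three algorithms — is the right one, and it is also what the paper does. But the combinatorics you describe is not correct, and this is a genuine gap.

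You assert that one application of $A_e$ (or of the OOCF map) corresponds to a \emph{bounded block} of RCF steps, in particular that $\#\{\text{ECF steps}\}\le\#\{\text{RCF steps}\}$ up to a constant. The actual relationship is the opposite. By the Kraaikamp--Lopes dictionary, every $\infty$-rational RCF convergent $P_n/Q_n$ is an ECF convergent, but between $P_n/Q_n$ and $P_{n+1}/Q_{n+1}$ the ECF may insert the \emph{intermediate} convergents $(iQ_n+Q_{n-1})^{-1}(iP_n+P_{n-1})$ for $2\le i\le a_{n+1}$, so when $a_{n+1}$ is large a single RCF step spawns arbitrarily many ECF steps. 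The OOCF behaves the same way relative to the $1$-rational convergents. Consequently your ``absorbed steps have $x_n\in[c,1-c]$'' picture and the ensuing geometric-tail argument do not model the situation: the extra terms are not bounded-length runs of harmless RCF steps, they are arbitrarily long chains of intermediate ECF/OOCF convergents along which $\log q^e_{k+1}/(q^e_k)^\nu$ must be shown to be summable. This is exactly where the indifferent fixed point of $A_e$ bites, and it is not handled by the block bound $\beta_{n+k}\le\rho\beta_n$ you propose.

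Relatedly, your explanation of the factor $\tfrac12$ is not right. It does not come from OOCF steps having ``length two'' in the RCF; it comes from the determinant identity $|p^{oo}_{n+1}q^{oo}_n-p^{oo}_n q^{oo}_{n+1}|=2$, which forces $\beta^{oo}_{n-1}q^{oo}_n+\veps^{oo}_n\beta^{oo}_nq^{oo}_{n-1}=2$ (compare $=1$ for RCF, ECF, OCF). This is what makes $\tfrac12 B_{oo,\nu}$ the right object to compare with $\sum_n (\log q^{oo}_{n+1})/(q^{oo}_n)^\nu$.

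The paper's route avoids working with the functional equations directly. It first replaces each Brjuno function by its denominator sum $\sum(\log q_{n+1})/q_n^\nu$ up to a bounded error (this is where the factor $\tfrac12$ appears for the OOCF), then observes that the RCF convergents split according to the parity type of $P_n/Q_n$ into $\infty$-rationals and $1$-rationals, and finally uses the Kraaikamp--Lopes and Kim--Lee--Liao descriptions to show that the ECF denominator sum matches the $\infty$-rational part and the OOCF denominator sum matches the $1$-rational part, each up to a bounded remainder. The extra intermediate-convergent terms are controlled because $\log((i+1)Q_n)/((iQ_n)^\nu)$ summed over $i$ is dominated by $(\log Q_n+C)/Q_n^\nu$, which is summable by the standard estimate $\sum Q_n^{-\nu}\log Q_n<\infty$. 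If you want to salvage your direct approach, this parity splitting and the correct ECF/OOCF $\leftrightarrow$ RCF dictionary are the missing ingredients.
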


A numerical computation of $B -  \left[B_{even,1} + \frac{1}{2}B_{oo,1}\right]$ leads to the graph in Figure~\ref{fig:EOO}: we conjecture that the function $x\mapsto B_{1,\nu}(x)-[B_{even,\nu}(x)+\frac{1}{2}B_{oo,\nu}(x)]$ extends to the whole real line as a $\mathbb{Z}$-periodic $1/2$-H\"{o}lder continuous function.

The paper is organized as follows.
In Section~\ref{sec:BF}, we generalize the H\"older continuity property of the Brjuno functions associated with the nearest-integer continued fraction to the $(1/2,\nu)$-Brjuno functions and prove Theorem~\ref{main1}.
In Section~\ref{sec:OF}, we introduce the Brjuno function associated with the OCF and prove Theorem~\ref{main2}.
In Section~\ref{sec:EF}, we define the Brjuno function associated with the ECF and to the OOCF and prove Theorem~\ref{main3}.

\section{The semi-Brjuno function}\label{sec:BF}
In this section, we study the semi-Brjuno function $B_{0,\nu}$ defined in \cite{LMNN10} as in \eqref{eq:B_al} with $\alpha=0$ associated to the by-excess continued fraction and we give a proof of Theorem~\ref{main1}.

Let $\alpha\in [0,1]$. Let us consider the linear space of real valued measurable functions $f$ such that
\begin{equation}\label{eq:ft.cond.al}
f(x) = f(-x) \text{ for } x\in (\overline{\alpha}-1,0)\setminus\mathbb{Q}\quad \text{ and }\quad f(x) = f(x+1)\text{ for }x\in\mathbb{R}\setminus\mathbb{Q}.
\end{equation}
On the functions satisfying \eqref{eq:ft.cond.al}, for $\alpha\in [0,1]$ and $\nu\ge 0$, we consider the linear operator $T_{\alpha,\nu}$ is defined by 
\begin{equation}\label{eq:op.alpha}
T_{\alpha,\nu} f(x) = x^\nu f(1/x) \quad \text{ for }x\in (0,\bar{\alpha}),
\end{equation}
where $T_{\alpha,\nu}f$ is extended to the whole real line by requiring that it satisfies \eqref{eq:ft.cond.al}, i.e. $T_{\alpha,\nu}f(x) = x^\nu f(A_{\alpha}(x))$.
Then we have the functional equation
$$[(1-T_{\alpha,\nu})B_{\alpha,\nu}](x) = -\log(|x-\lfloor x-\alpha+1\rfloor|),$$
where the function on the right hand side satisfies \eqref{eq:ft.cond.al} and the value is $-\log{x}$ for $x\in(0,\bar{\alpha})$.

Let us consider a function $g$ defined on an interval $I\subset \mathbb{R}$.
For $0<\eta\le 1$, we define \emph{the H\"{o}lder's $\eta$-seminorm} by
$$|g|_\eta := \sup_{x < x' \in I}\frac{|g(x)-g(x')|}{|x-x'|^\eta}.$$
We define the norm $\|g\|_\eta := |g|_\eta + \|g\|_{\mathcal{C}^0}.$
We say that a uniformly bounded function $g$ is \emph{$\eta$-H\"{o}lder continuous} if $\|g\|_\eta <\infty$ and denote by $\mathcal{C}^\eta_{I}$ the set of the $\eta$-H\"{o}lder continuous functions.
Note that $\mathcal{C}^1_I$ is the space of the Lipschitz continuous functions and it is different from the space of continuously differentiable functions, usually denoted by $C^1$.

From now on, we recall some properties of the operator $T_{1/2,\nu}$ in \cite[Thm 4.4 and 4.6]{MMY97}.
The spectral radius of $T_{1/2,\nu}$ is strictly less than $1$ on the space of functions in $L^p$ satisfying \eqref{eq:ft.cond.al} for $\alpha=1/2$ and $p\ge 1$.
Then the operator ${\bf B_{1/2,\nu}} : = (1-T_{1/2,\nu})^{-1}$ is well-defined.
\begin{prop} \label{MMY4.4} If $f\in \mathcal{C}^\eta_{[0,1/2]}$ for $\nu<2\eta$, then we have ${\bf B_{1/2,\nu}} f \in \mathcal{C}^{\nu/2}_{[0,1/2]}.$
\end{prop}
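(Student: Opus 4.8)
The plan is to expand the resolvent as a Neumann series, $\mathbf{B}_{1/2,\nu} f = \sum_{k\ge 0} T_{1/2,\nu}^k f$, and to estimate each term in the Hölder seminorm $|\cdot|_{\nu/2}$ on $[0,1/2]$, showing the sum converges geometrically. Since the spectral radius of $T_{1/2,\nu}$ on the relevant $L^p$ space is strictly less than $1$ (the cited fact from \cite{MMY97}), the $\cC^0$ part of the norm is already under control; the real work is the $\nu/2$-Hölder seminorm. The iterate $T_{1/2,\nu}^k f$ is a sum over admissible strings of branches of the NICF map: writing $y_j$ for the inverse-branch compositions, one has $(T_{1/2,\nu}^k f)(x) = \sum (\beta_{k-1}(x))^\nu f(A_{1/2}^k(x))$-type expressions, where the product of derivatives of the inverse branches on a cylinder of depth $k$ is comparable to $(\beta_{k-1})^2$ by the standard continued-fraction estimate $|(\text{cylinder})| \asymp 1/Q_k^2$ and $\beta_{k-1}\asymp 1/Q_k$. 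This is exactly why the exponent $\nu < 2\eta$ enters: we have $\nu/2$ derivatives' worth of contraction to spend, and $f$ supplies Hölder regularity of order $\eta$.

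The key steps, in order, are: (1) fix a cylinder $\Delta$ of depth $k$ for the NICF and two points $x,x'$; split into the case where $x,x'$ lie in the same depth-$k$ cylinder and the case where they lie in different cylinders. (2) In the same-cylinder case, write the difference $(T_{1/2,\nu}^k f)(x) - (T_{1/2,\nu}^k f)(x')$ using the functional form, and control it by $\||f|_\eta \cdot (\sup_\Delta |\beta_{k-1}|^\nu)\cdot \mathrm{diam}(A_{1/2}^k\Delta)^\eta$ plus a term from the variation of the weight $(\beta_{k-1})^\nu$ itself, which is handled by the bounded-distortion/Koebe-type estimates for the NICF inverse branches (the NICF has no indifferent fixed point, so distortion is uniformly bounded — this is where $\alpha = 1/2$ rather than $\alpha=0$ matters). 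Since $A_{1/2}^k$ expands $\Delta$ to a fixed-size interval and $|\beta_{k-1}|^\nu \asymp |\Delta|^{\nu/2}$, the bound becomes $\lesssim |\Delta|^{\nu/2} \cdot (|\Delta|^{-1})^\eta \cdot |x-x'|^\eta$, and since $|x-x'|\le |\Delta|$ and $\eta > \nu/2$ we can trade: $|x-x'|^\eta = |x-x'|^{\nu/2}|x-x'|^{\eta-\nu/2} \le |x-x'|^{\nu/2}|\Delta|^{\eta-\nu/2}$, giving $\lesssim |\Delta|^{\nu/2-\eta}\cdot|\Delta|^\eta\cdot|x-x'|^{\nu/2}\,|\beta_{k-1}|^{-?}$ — more cleanly, the $|\Delta|$ powers cancel to leave a uniform constant times $|x-x'|^{\nu/2}$. (3) In the different-cylinder case, use that $|x-x'|$ is then bounded below by (a constant times) the size of the smaller adjacent cylinder, reduce to summing the $\cC^0$ bounds $\sup |T_{1/2,\nu}^k f| \lesssim \theta^k\|f\|_{\cC^0}$ over cylinders meeting $[x,x']$, and check this is $\lesssim |x-x'|^{\nu/2}$ by the geometric-series-over-scales argument (comparing $\theta^k$ with $|\Delta_k|^{\nu/2}\asymp \lambda^{-k\nu/2}$ for the expansion constant $\lambda$ of the NICF — one needs $\theta < \lambda^{-\nu/2}$, or handles the borderline by a dyadic decomposition). (4) Sum over $k\ge 0$: each $|T_{1/2,\nu}^k f|_{\nu/2} \lesssim \theta_1^k \|f\|_\eta$ for some $\theta_1<1$, so $\mathbf{B}_{1/2,\nu}f$ has finite $\nu/2$-seminorm; combined with the $\cC^0$ bound from the spectral radius statement, $\mathbf{B}_{1/2,\nu}f\in\cC^{\nu/2}_{[0,1/2]}$.

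The main obstacle I expect is step (2)–(3): getting the powers of the cylinder length to cancel \emph{exactly} (so that no residual positive power of a cylinder diameter survives that would blow up when summed, and no residual negative power survives either), and in particular verifying the geometric decay rate in $k$ is genuinely $<1$ rather than merely $\le 1$. The borderline exponent $\nu = 2\eta$ is excluded precisely because there the Hölder-trading inequality $|x-x'|^{\eta-\nu/2}\le |\Delta|^{\eta-\nu/2}$ degenerates. Making the bounded-distortion constants for the NICF inverse branches explicit and uniform over all cylinder depths — so that the per-level constant in the geometric series does not itself grow with $k$ — is the technical heart, but this is classical for the NICF (it is a uniformly expanding, finitely-many-full-branches-after-acceleration map) and should follow from the estimates already available in \cite{MMY97}. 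An alternative, possibly cleaner, route avoiding the explicit series is to argue by a fixed-point/contraction-mapping statement: show $T_{1/2,\nu}$ maps $\cC^{\nu/2}_{[0,1/2]}$ into itself with norm $<1$ plus a compact perturbation, deduce $1-T_{1/2,\nu}$ is invertible on $\cC^{\nu/2}$, and identify the inverse with $\mathbf{B}_{1/2,\nu}$ using the already-known $L^p$ invertibility; but the quantitative Neumann-series approach is more transparent about where $\nu<2\eta$ is used.
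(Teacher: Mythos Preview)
Your overall architecture (Neumann series, cylinder decomposition, bounded distortion, H\"older trading) matches the approach in \cite{MMY97} and the parallel argument this paper gives for the OCF case in Appendix~B. However, step~(4) contains a genuine gap: the assertion that each iterate satisfies $|T_{1/2,\nu}^k f|_{\nu/2}\lesssim \theta_1^k\|f\|_\eta$ with $\theta_1<1$ is not established by steps~(2)--(3), and in fact it is false in the regime $\nu<2\eta$. Your own step~(2) already shows why: for $x,x'$ in the same depth-$k$ cylinder $\Delta$, after the trading $|x-x'|^{\eta}\le|x-x'|^{\nu/2}|\Delta|^{\eta-\nu/2}$ the $|\Delta|$-powers cancel \emph{exactly}, leaving a constant independent of $k$. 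Taking the supremum over all pairs $(x,x')$ therefore gives only $|T_{1/2,\nu}^k f|_{\nu/2}\le C$ uniformly in $k$ (compare the paper's Theorem~\ref{th:op1}\,(3), where at the borderline $\nu=2\eta$ one obtains merely $\|T_{o,\nu}^m\|_{\cC^{\nu/2}}\le c_{6,\nu}$). Summing these bounds over $k$ diverges.

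The fix, which is how \cite{MMY97} and the proof of Theorem~\ref{th:op2} in Appendix~B proceed, is to \emph{postpone} the H\"older trading. Fix $x,x'$, set $n=n(x,x')$ (the splitting number), and keep for $k\le n$ the sharper pre-trading bound
\[
|T_{1/2,\nu}^k f(x)-T_{1/2,\nu}^k f(x')|\;\lesssim\;\|f\|_\eta\,\beta_{k-1}(x)^{\nu-2\eta}\,|x-x'|^{\eta}.
\]
Since $\nu-2\eta<0$ and $\beta_{k-1}$ decays geometrically, the sum $\sum_{k\le n}\beta_{k-1}^{\nu-2\eta}$ is dominated by a constant times its last term $\beta_{n-1}^{\nu-2\eta}$. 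Only now use $|x-x'|\lesssim\beta_{n-1}^2$ (both points lie in the same or adjacent level-$n$ cylinders) to get
\[
|x-x'|^{\eta}\,\beta_{n-1}^{\nu-2\eta}\;=\;|x-x'|^{\nu/2}\bigl(|x-x'|\beta_{n-1}^{-2}\bigr)^{\eta-\nu/2}\;\lesssim\;|x-x'|^{\nu/2}.
\]
For $k>n$ your step~(3) is correct and gives $\sum_{k>n}g^{(k-n)\nu}|x-x'|^{\nu/2}\lesssim|x-x'|^{\nu/2}$. The two pieces together bound $|{\bf B_{1/2,\nu}}f(x)-{\bf B_{1/2,\nu}}f(x')|$ by a constant times $|x-x'|^{\nu/2}$, uniformly in $(x,x')$. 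The point is that the sum over $k$ must be estimated \emph{after} fixing the pair, with the split at $n(x,x')$; it cannot be reduced to a sum of operator seminorms.
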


From the above proposition, it is shown that the difference $B_{1,1}-B_{1/2,1}$ is $1/2$-H\"older continuous.
It can be generalized to the Brjuno functions with the exponents $0<\nu\le2$.
\begin{thm}\label{MMY4.6}
For $0<\nu\le2$, the difference $B_{1,\nu} - B_{1/2,\nu}$ can be extended from $\mathbb{R}\setminus\mathbb{Q}$ to $\mathbb{R}$ as a $\nu/2$-H\"{o}lder continuous $\mathbb{Z}$-periodic function.
\end{thm}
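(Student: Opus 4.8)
The plan is to deduce the statement from Proposition \ref{MMY4.4} by writing $B_{1,\nu}-B_{1/2,\nu}$ as the image under $\mathbf{B}_{1/2,\nu}=(1-T_{1/2,\nu})^{-1}$ of a function that is $\eta$-H\"older continuous for some $\eta>\nu/2$, and then checking the periodicity and the extension to $\bQ$ separately. First I would record the two functional equations $(1-T_{1,\nu})B_{1,\nu}(x)=-\log\{1/x\text{-type thing}\}$, more precisely $(1-T_{1/2,\nu})B_{1/2,\nu}(x)=-\log(|x-\lfloor x-1/2\rfloor|)$ and the analogous identity for $B_{1,\nu}$ with $\alpha=1$, both on $\bR\setminus\bQ$ and both in the space of functions satisfying \eqref{eq:ft.cond.al} with the relevant $\overline\alpha$. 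Since $\{1/x\}=A_1(x)$ and the NICF map $A_{1/2}$ differ by the ``one extra RCF step'' relation — on each interval $I_n$ one has either $A_{1/2}=A_1$ or $A_{1/2}=1-A_1$, i.e. the NICF groups together a pair of RCF steps in the partial quotient-$1$ case — I would express $B_{1,\nu}$ in terms of $T_{1/2,\nu}$ by iterating \eqref{eq:fteq} once where needed. Concretely, one shows $(1-T_{1/2,\nu})B_{1,\nu}=\Phi_\nu$ for an explicit $\Phi_\nu$ built from $-\log x$ and one application of the RCF branch, so that $(1-T_{1/2,\nu})(B_{1,\nu}-B_{1/2,\nu})=\Phi_\nu-(-\log(|x-\lfloor x-1/2\rfloor|))=:\Psi_\nu$, and the key point is that in the difference the logarithmic singularities at the endpoints of the monotonicity intervals cancel, leaving $\Psi_\nu$ bounded; with a little more care $\Psi_\nu\in\cC^\eta_{[0,1/2]}$ for every $\eta<1$, hence in particular for some $\eta$ with $\nu<2\eta$ as long as $\nu\le 2$ (taking $\eta$ slightly below $1$; the borderline $\nu=2$ needs $\eta$ close to $1$, which is fine since $\Psi_\nu$ is actually Lipschitz away from the cancelling singularities).

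Next I would invoke Proposition \ref{MMY4.4}: since $\Psi_\nu\in\cC^\eta_{[0,1/2]}$ with $\nu<2\eta$, we get $\mathbf{B}_{1/2,\nu}\Psi_\nu\in\cC^{\nu/2}_{[0,1/2]}$. It remains to identify $\mathbf{B}_{1/2,\nu}\Psi_\nu$ with $B_{1,\nu}-B_{1/2,\nu}$. This follows because $(1-T_{1/2,\nu})$ is invertible on $L^p$ (spectral radius $<1$, as recalled from \cite[Thm 4.4]{MMY97}), and $B_{1,\nu}-B_{1/2,\nu}$ lies in the relevant $L^p$ space and solves the same equation $(1-T_{1/2,\nu})h=\Psi_\nu$; uniqueness of the $L^p$ solution gives $h=\mathbf{B}_{1/2,\nu}\Psi_\nu$. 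One then extends the equality from $\bR\setminus\bQ$ to $\bR$ by continuity of the $\cC^{\nu/2}$ representative, obtaining the claimed H\"older extension. The $\bZ$-periodicity is automatic since both $B_{1,\nu}$ and $B_{1/2,\nu}$ satisfy \eqref{eq:ft.cond.al}, and the even-reflection conditions have compatible $\overline\alpha$ values at the relevant endpoints (one checks the difference is continuous across $0$ and $1/2$, which is where the two symmetry conditions must be matched up).

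The main obstacle is the second step above: showing that the ``source term'' $\Psi_\nu=\Phi_\nu+\log(|x-\lfloor x-1/2\rfloor|)$ is genuinely H\"older (indeed essentially Lipschitz) rather than merely bounded. This requires a careful bookkeeping of how the RCF and NICF branches sit on the intervals $I_n$ and $[\tfrac1{n+1/2},\tfrac1{n-1/2}]$, verifying that on each piece the singular $-\log$ contributions from $B_{1,\nu}$'s extra iterate and from $B_{1/2,\nu}$'s defining term have the same leading asymptotics and cancel, with a remainder whose derivative is bounded uniformly in $n$ (the $x^\nu$ weights and the smoothness of $x\mapsto 1/x$ on each $I_n$ help, but the accumulation of intervals near $0$ must be controlled). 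A secondary technical point is handling the endpoint exponent $\nu=2$, where Proposition \ref{MMY4.4} needs $\eta>1$; this is circumvented by noting that for $\nu=2$ one can still push $\eta\to 1^-$ because $\Psi_2$ is Lipschitz, and the operator $\mathbf{B}_{1/2,2}$ maps $\cC^{1^-}$ into $\cC^{1^-}=\cC^{\nu/2}$ by a limiting/interpolation argument, or alternatively by quoting the $\alpha=1/2$, $\nu=2$ case of \cite[Thm 4.6]{MMY97} directly.
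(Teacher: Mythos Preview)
Your plan has a genuine gap at the claim ``with a little more care $\Psi_\nu\in\cC^\eta_{[0,1/2]}$ for every $\eta<1$''. If you actually compute $\Psi_\nu$, on the branches where $A_1(x)<1/2$ one has $A_{1/2}(x)=A_1(x)$ and $\Psi_\nu=0$, while on the branch $x\in\big(\tfrac{1}{n},\tfrac{2}{2n-1}\big)$ (where $A_1(x)\in(1/2,1)$ and $A_{1/2}(x)=1-A_1(x)$) one gets
\[
\Psi_\nu(x)=x^\nu\bigl[B_{1,\nu}(A_1(x))-B_{1,\nu}(1-A_1(x))\bigr]=2x^\nu B^-_{1,\nu}(A_1(x)),
\]
where $B^-_{1,\nu}$ is the odd part of $B_{1,\nu}$. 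There are no logarithmic singularities here --- $B^-_{1,\nu}$ is bounded --- so the ``cancellation of logs'' picture is not the right one. The obstruction is instead the cost of composing with $A_1$: since $|A_1'|\asymp n^2$ on the $n$th branch and $x^\nu\asymp n^{-\nu}$, the $\eta$-H\"older seminorm of $\Psi_\nu$ on that branch is $\asymp n^{2\eta-\nu}$, which is uniformly bounded in $n$ only for $\eta\le\nu/2$. Hence $\Psi_\nu\in\cC^{\nu/2}_{[0,1/2]}$ but $\Psi_\nu\notin\cC^\eta_{[0,1/2]}$ for any $\eta>\nu/2$, and Proposition~\ref{MMY4.4} (which needs $\nu<2\eta$) does not apply; at best the borderline case yields $\cC^{\eta'}$ for $\eta'<\nu/2$, short of the theorem.

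There is a second, related problem in your uniqueness step: the operator $T_{1/2,\nu}$ and Proposition~\ref{MMY4.4} are set up on the space of \emph{even} $\bZ$-periodic functions (condition~\eqref{eq:ft.cond.al} with $\overline\alpha=1/2$), but $B_{1,\nu}$ is not even, so $B_{1,\nu}-B_{1/2,\nu}$ does not lie in that space and cannot equal $\mathbf{B}_{1/2,\nu}\Psi_\nu$. The paper's proof resolves both issues at once by splitting off the odd part $B^-_{1,\nu}(x)=\tfrac{1-(1-x)^\nu}{2}\log\tfrac{1-x}{x}$, which is explicitly in $\cC^\eta_{[0,1/2]}$ for $\nu/2<\eta<\nu$, and then works with the even difference $\Delta=B^+_{1,\nu}-B_{1/2,\nu}$. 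The functional equation for $\Delta$ is not of the simple form $(1-T_{1/2,\nu})\Delta=\text{(nice)}$; one gets instead $\Delta(x)=-B^-_{1,\nu}(x)+x^\nu B^-_{1,\nu}(1/x)+x^\nu\Delta(1/x)$, which unwinds to $\Delta=-\mathbf{B}_{1/2,\nu}B^-_{1,\nu}+\sum_{n\ge1}\varepsilon_n T_{1/2,\nu}^n B^-_{1,\nu}$ with signs $\varepsilon_n=\pm1$. The crucial gain is that both pieces are fed $B^-_{1,\nu}$ \emph{itself}, which is $\eta$-H\"older for $\eta>\nu/2$, rather than $B^-_{1,\nu}\circ A_1$; Proposition~\ref{MMY4.4} handles the first piece and a separate lemma from \cite{MMY97} handles the sign-twisted sum, both landing in $\cC^{\nu/2}$.
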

\begin{proof}
Let us denote the even part and the odd part of $B_{1,\nu}$ by $B^+_{1,\nu}$ and $B^-_{1,\nu}$, respectively.
For $x\in[0,1/2]$, we have
\begin{align*}
B^-_{1,\nu}(x)  & = \frac{1-(1-x)^\nu}{2}\log \frac{1-x}{x}\in \mathcal{C}_{[0,1/2]}^{\nu'} \text{ for }0\le \nu'<\nu, \text{ and } \\
B^+_{1,\nu}(x) & = -\log x - B^-_{1,\nu}(x) + x^\nu B^-_{1,\nu}(1/x) + x^\nu B^+_{1,\nu}(1/x).
\end{align*}
Let $\Delta(x) = B^+_{1,\nu}(x)-B_{1/2,\nu}(x)$. Then we have $\Delta(x) = - B_{1,\nu}^-(x) + x^\nu B_{1,\nu}^-(1/x) + x^\nu \Delta(1/x)$. 
Then, $\Delta(x) = - {\bf B_{1/2,\nu}} B_{1,\nu}^-(x) + \sum_{n=0}^\infty T_{1/2,\nu}^n(x^\nu B_{1,\nu}^-(1/x))$.
By letting $\veps_n(x) = \mathrm{sgn}(\frac{1}{A_{1/2}^n(x)}-\lfloor \frac{1}{A_{1/2}^n(x)}+\frac{1}{2}\rfloor)$, each term of the second summand is
\begin{align*}
T_{1/2,\nu}^n(x^\nu B_{1,\nu}^-(1/x)) &= \beta_{1/2,n-1}^\nu (x) (A_{1/2}^n(x))^\nu B_{1,\nu}^-\left(\frac{1}{A_{1/2}^n(x)}\right) \\
&=  (\beta_{1/2,n}(x))^\nu B_{1,\nu}^-(\veps_n(x) A_{1/2}^{n+1}(x)) = \veps_n(x) (\beta_{1/2,n}(x))^\nu B^-_{1,\nu}(A_{1/2}^{n+1}(x)).
\end{align*}
Therefore, $\Delta = - {\bf B_{1/2,\nu}} B_{1,\nu}^- + \sum_{n=1}^\infty \veps_n T_{1/2,\nu}^nB_{1,\nu}^-$.
It is shown that $ \sum_{n=1}^\infty \veps_n T_{1/2,\nu}^n f \in \mathcal{C}^{\nu/2}_{[0,1/2]}$ if $f\in \mathcal{C}^\eta_{[0,1/2]}$ for $\nu<2\eta$ in \cite{MMY97}.
Combining with Proposition~\ref{MMY4.4}, we have $\Delta \in \mathcal{C}^{\nu/2}_{[0,1/2]}$.
\end{proof}

We recall the functional equation in \eqref{eq:fteq} for $\alpha=0$ and $\alpha=1/2$ as
\begin{align}
& \label{eq:ft.eq.B} B_{0,\nu}(x) = - \log x + x^\nu B_{0,\nu} (- x^{-1}) \text{ for }x\in (0,1)\setminus\mathbb{Q}, \\
& \label{eq:ft.eq.N} B_{1/2,\nu}(x) = - \log x + x^\nu B_{1/2,\nu}(x^{-1}) \text{ for }x\in (0,1/2)\setminus\mathbb{Q}.
\end{align}
\begin{proof}[Proof of Theorem~\ref{main1}]
Let $\nu\in (0,2]$.
By Theorem~\ref{MMY4.6} and $B_{0,\nu}(x)=B_{0,\nu}(x+1)$, it is sufficient to show that $\Delta(x):= B_{1/2,\nu}(x) - [B_{0,\nu}(x) + B_{0,\nu}(1-x)]$ is ${\nu}/2$-H\"{o}lder continuous.
Since $\Delta$ is an even $\mathbb{Z}$-periodic function, it is enough to show that $\Delta\in\mathcal{C}^{\nu/2}_{[0,1/2]}$.

For $x\in[0,1/2]$, $A_0(x) = A_{1/2}(x)$ if $A_0(x)<1/2$, otherwise, $A_0(x) = 1-A_{1/2}(x)$.
From the functional equations in \eqref{eq:ft.eq.B} and \eqref{eq:ft.eq.N}, we have
\begin{align}
\Phi(x)
&\nonumber := \Delta(x) -  x^\nu \Delta\big(A_{1/2}(x)\big) \\
&\nonumber = - x^\nu B_{0,\nu}\big(A_0(x)\big) - B_{0,\nu}(1-x) + x^\nu \left[B_{0,\nu}\big(A_{1/2}(x)\big) + B_{0,\nu}\big(1-A_{1/2}(x)\big)\right] \\
&\label{eq:rest.B} = x^\nu B_{0,\nu} \big(1-A_0(x)\big)-B_{0,\nu} (1-x),
\end{align}
which means that $\Delta = {\bf B_{1/2,\nu}} \Phi.$
See Figure~\ref{fig:rest.B} for the graphs of $\Phi$ with $\nu=1$ and $\nu=1/2$.
\begin{figure}
$\begin{matrix}
\includegraphics[width=8cm]{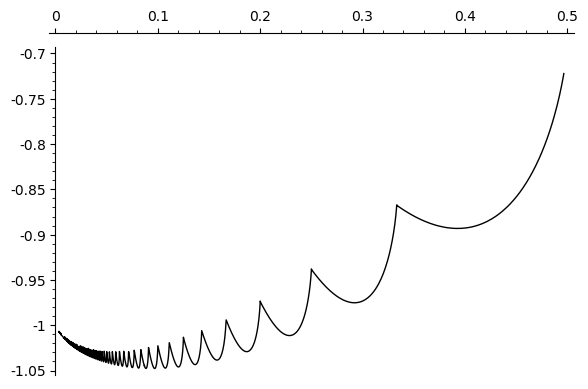} &
\includegraphics[width=8cm]{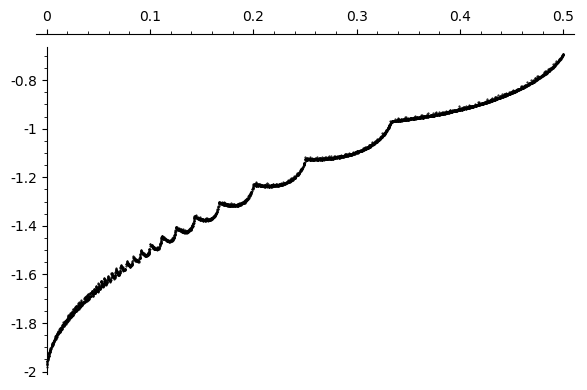} 
\end{matrix}$
\caption{The graphs of $xB_{0,1}(1-A_{0}(x))-B_{0,1}(1-x)$ (left) and $x^{1/2}B_{0,1/2}(1-A_0(x))-B_{0,1/2}(1-x)$ (right).}
\label{fig:rest.B}
\end{figure}
By Proposition~\ref{MMY4.4}, it is enough to show that $\Phi\in \mathcal{C}^\eta_{[0,1/2]}$ for $\eta>\nu/2$.
We will prove it by the following two steps:
\begin{enumerate}
\item\label{it:thm1-1}
We will first show that $\Phi(x)$ on $(0,1/2]$ can be represented by
\begin{equation}\label{eq:rest.B2}
\begin{split}
\Phi(x) 
 = & ((1-(n-1)x)^\nu - x^\nu) \log(1-nx) + x^\nu \log x \\
    & + \sum_{k=1}^{n-1}((1-(k-1)x)^\nu - (1-kx)^\nu) \log(1-kx), \quad \text{ where } n = \left\lfloor \frac1 x\right\rfloor.
\end{split}
\end{equation}

\item\label{it:thm1-2} Then, we will show that $\Phi(x)-x^\nu \log x$
is of $\mathcal{C}^{\eta}_{[0,1/2]}$ for $\eta>\nu/2$.
Since $x^\nu \log x$ is $\eta$-H\"{o}lder continuous for $\eta\in(0,\nu)$, it completes the proof.
\end{enumerate}

To show \eqref{it:thm1-1}, we introduce the \emph{jump transformation} $F_{jump}$ of $A_{0}$ with respect to the interval $[0,1/2)$ which is defined by
$$F_{jump}(x) = A_{0}^{m(x)+1}(x),$$
where $m(x) = \min\{n\ge0: A_{0}^{n}\in[0,1/2)\}$. 
Let us denote by
$$
 \psi(x) := - \sum_{k=0}^{m(x)} \big(kx-(k-1)\big)^\nu \log\left(\frac{(k+1)x-k}{kx-(k-1)}\right),
$$
which is equal to $B_{0,\nu}(x) - (xA_0(x)\cdots A_0^{m(x)}(x))^\nu B_{0,\nu}(F_{jump}(x))$.
Since $A_{0}+G \equiv 1$, we have
$$
x^\nu B_{0,\nu}(1-A_0(x)) = (xG(x))^{\nu} B_{0,\nu} (1-G^2(x)) - x^\nu \log(G(x)). 
$$
Let us assume that $x\in\left[\frac{1}{n+1},\frac{1}{n}\right]$ for some $n\ge2$.
Then, $x G(x) = 1-nx$.
Since $1-x\in\left[\frac{n-1}{n}, \frac{n}{n+1}\right]$, we have $m(1-x) = n-1$.
Thus $(1-x)A_0(1-x)\cdots A_0^{m(1-x)}(1-x) = 1-nx$.
Kraaikamp and Nakada \cite{KN01} observed that
$1-G^2(x) = F_{jump}(1-x).$
Thus, we have $x^\nu B_{0,\nu} (1-A_0(x))-B_{0,\nu} (1-x) = - x^\nu \log(G(x))-\psi(1-x).$
Since 
$x^\nu \log(G(x)) = x^\nu \log(1/x-n)$
and
\begin{align*}
- \psi(1-x) 
& = \sum_{k=0}^{n-1}(1-kx)^\nu \log\left(\frac{1-(k+1)x}{1-kx}\right) \\
& = \sum_{k=1}^{n-1}((1-(k-1)x)^\nu - (1-kx)^\nu) \log(1-kx) + (1-(n-1)x)^\nu \log(1-nx),
\end{align*}
we obtain \eqref{eq:rest.B2}.

We will show \eqref{it:thm1-2} as follows.
Let $h(x): = ((1-(n-1)x)^\nu-x^\nu)\log(1-nx)$ and 
$$g(x): = \sum_{k=1}^{n-1} ((1-(k-1)x)^\nu - (1-kx)^\nu)\log(1-kx).$$
Then, $h(x)+g(x) = \Phi(x)-x^\nu \log x$.

We have
$$\lim_{x\nearrow \frac 1n} h(x)
= \lim_{x\nearrow \frac 1n}\frac{(1-nx+x)^\nu - x^\nu}{1-nx}(1-nx)\log(1-nx)=0.$$ 
Thus, $h(x)+g(x)$ is continuous on $(0,1/2]$.
Given $x\in [0,1/2]$, let $n=\lfloor 1/x \rfloor$.
Note that $x\to 0$ is equivalent to $n\to \infty$.
Since $0\le 1-nx \le \frac{1}{n+1}$, $\lim_{x\to 0} (1-nx) = 0$.
Thus we have $\lim_{x\to0} h(x) = 0.$
Since the maximal length of the subintervals in the partition $\{1-kx: k= 0,\cdots, n\}$ approaches $0$ when $x\to 0$, by using the Riemann–Stieltjes integral, we have
\begin{align*}
& \lim_{x\to 0} g(x)
 = \int_0^1 \nu y^{\nu-1}\log y \mathrm{d}y 
 = \left[ y^\nu \log y - \frac{1}{\nu}y^\nu\right]^1_0 =  -\frac{1}{\nu}. 
 \end{align*}
Also we have 
\begin{equation}\label{eq:rest.B3}
g(x)
\sim \int_{1-nx}^1 \nu y^{\nu-1}\log y \mathrm{d}y = - (1-nx)^\nu\log(1-nx) + (1-nx)^\nu/\nu - 1/\nu 
\end{equation}
as $x\to 0$.

Let us define $h(0)=0$ and $g(0)=-1/\nu$. Then $h+g$ is continuous and uniformly bounded on $[0,1/2]$.
We will complete the proof by showing that for some $\eta>\nu/2$, for all $x'\in[0,1/2]$, there exists
$$\lim_{x\to x'}\frac{|(h+g)(x)-(h+g)(x')|}{|x-x'|^\eta}.$$

The functions $h, g$ are differentiable on $(0,1/2]\setminus \bigcup_{n=2}^\infty\{1/n\}$.
The function $g$ is right and left differentiable on $\frac{1}{n+1}$.
The function $h$ is right differentiable on $\frac{1}{n+1}$. 
If $\eta\in(\nu/2, \min\{\nu,1\})$, then
$$\lim_{x\nearrow \frac 1n}\frac{|h(x)|}{|x-1/n|^\eta} 
= \lim_{x\nearrow \frac 1n} \frac{|(1-nx+x)^\nu - x^\nu|}{n^{-\eta}|1-nx|}|1-nx|^{1-\eta}\log\frac{1}{1-nx} = 0.$$
Thus, for $x\in(0,1/2]$, we have
$\lim_{x'\to x}\frac{|(h+g)(x)-(h+g)(x')|}{|x-x'|^\eta} = 0.$

For $x\in \left[\frac{1}{n+1},\frac{1}{n}\right]$, we have $0\le 1-nx\le x$.
Thus we have
\begin{align*}
 \lim_{x\to 0}\frac{|h(x)|}{x^\eta}  \le \lim_{x\to 0} \frac{|h(x)|}{|1-nx|^\eta} = 0.
\end{align*}
By \eqref{eq:rest.B3}, we have
\begin{align*}
\lim_{x\to 0}\frac{|g(x)+1/\nu|}{x^\eta} \lesssim 
\lim_{x\to 0}(1-nx)^{\nu-\eta}|\log (1-nx) - 1/\nu| = 0.
\end{align*}
Thus $\lim_{x\to 0}\frac{|(h+g)(x)+1/\nu|}{x^\eta} = 0$ and then $h+g \in \mathcal{C}^{\eta}_{[0,1/2]}$ for $\eta\in(\nu/2,\min\{\nu,1\})$.
\end{proof}

\section{The OCF-Brjuno function}\label{sec:OF}
In this section, we recall some known arithmetic and measure-theoretical properties of the OCF.
Then we prove Theorem~\ref{main2} giving the H\"older regularity of the $\nu$-Brjuno functions associated to the OCF. 
\begin{figure}
\begin{tikzpicture}[scale=5]
\draw (1 ,0) -- (1 +1,0);
\draw (1 +0,0) -- (1 +0,1);
\draw[domain=1 +1/2:1 +1,black] plot (\x,{(1/(\x-1 )-1)});
\draw[domain=1 +1/3:1 +1/2,black] plot (\x,{-(1/(\x-1 )-3)});
\draw[domain=1 +1/4:1 +1/3,black] plot (\x,{(1/(\x-1 )-3)});
\draw[domain=1 +1/5:1 +1/4,black] plot (\x,{-(1/(\x-1 )-5)});
\draw[domain=1 +1/6:1 +1/5,black] plot (\x,{(1/(\x-1 )-5)});
\draw[domain=1 +1/7:1 +1/6,black] plot (\x,{-(1/(\x-1 )-7)});
\draw[domain=1 +1/8:1 +1/7,black] plot (\x,{(1/(\x-1 )-7)});
\draw[domain=1 +1/9:1 +1/8,black] plot (\x,{-(1/(\x-1 )-9)});
\draw[domain=1 +1/10:1 +1/9,black] plot (\x,{(1/(\x-1 )-9)});
\draw[domain=1 +1/11:1 +1/10,black] plot (\x,{-(1/(\x-1 )-11)});
\draw[domain=1 +1/12:1 +1/11,black] plot (\x,{(1/(\x-1 )-11)});
\draw[domain=1 +1/13:1 +1/12,black] plot (\x,{-(1/(\x-1 )-13)});
\draw[domain=1 +1/14:1 +1/13,black] plot (\x,{(1/(\x-1 )-13)});
\draw[domain=1 +1/15:1 +1/14,black] plot (\x,{-(1/(\x-1 )-15)});

\draw[domain=1 +1/16:1 +1/15,black] plot (\x,{(1/(\x-1 )-15)});
\draw[domain=1 +1/17:1 +1/16,black] plot (\x,{-(1/(\x-1 )-17)});
\draw[domain=1 +1/18:1 +1/17,black] plot (\x,{(1/(\x-1 )-17)});
\draw[domain=1 +1/19:1 +1/18,black] plot (\x,{-(1/(\x-1 )-19)});
\draw[domain=1 +1/20:1 +1/19,black] plot (\x,{(1/(\x-1 )-19)});
\draw[domain=1 +1/21:1 +1/20,black] plot (\x,{-(1/(\x-1 )-21)});

\draw[dotted, thin] (1 +1/2,0) -- (1 +1/2,1);

\node at (1 -0.05,1){\tiny$1$};
\node at (1 +1,-0.08){\tiny$1$};
\node at (1 -0.05,-0.08){\tiny$0$};
\node at (1 +1/2,-0.1) {\small$\frac12$};

\draw[dotted, thin] (1 +1/3,0) -- (1 +1/3,1);

\node at (1 +1/3,-0.1) {\small$\frac13$};
\node at (1 +1/4,-0.1) {\small$\frac14$};
\node at (1 +1/5,-0.1) {\small$\frac15$};

\draw[dotted, thin](1 +1/4,0)--(1 +1/4,1);
\draw[dotted, thin](1 +1/5,0)--(1 +1/5,1);

\end{tikzpicture}
\caption{The graph of $A_{odd}$.}\label{fig:AO}
\end{figure}
From now on, let us denote the reciprocal of the golden mean by $g=\frac{\sqrt{5}-1}{2}$.
The OCF map $A_{odd}$ defined as \eqref{eq:AOE} is ergodic with respect to a probability measure
$$dm_{odd}(x):=\frac{1}{3\log (g^{-1})}\left(\frac{1}{g^{-1}-1+x}+\frac{1}{g^{-1}+1-x}\right)dx,$$
see \cite[Thm 1]{Sch82}.
We denote by $\lfloor t \rfloor_{odd} := 2\lfloor t/2 \rfloor +1$ a nearest odd integer of $t$.
For $x\in\mathbb{R}$, the \emph{OCF partial quotients} $(a_{o,n},\veps_{o, n})$, $n\ge0$ are defined by 
\begin{equation}\label{eq:digitOE}
a_{o,0} := \lfloor x \rfloor_{odd}, ~\veps_{o,0} :=  \mathrm{sign}(x-a_{o,0}),
~a_{o,n} := \left\lfloor \frac{1}{x_{o,n-1}}\right\rfloor_{odd} \text{ and }
\veps_{o,n} := \text{sign}\bigg(\dfrac{1}{x_{o,n-1}}- a_{o,n}\bigg),
\end{equation}
where $x_{o,n}$ is an $A_{odd}$-orbit of $x$ defined as in \eqref{eq:set.x_n}.
Every $x\in \mathbb{R}$ has the following OCF-expansion:
\begin{equation*}
 x =a_{o,0} + \dfrac{\veps_{o,0}}{a_{o,1}+\dfrac{\veps_{o,1}} {\ddots+\dfrac{\ddots}{a_{o,n}+ \dfrac{\veps_{o,n}}{a_{o,n+1}+\ddots}}}}.
\end{equation*}
The \emph{$n$th principal convergent of  {the} OCF} is the truncated continued fraction expansion denoted by
$$
\frac{p_{o,n}}{q_{o,n}} := a_{o,0} + \dfrac{\veps_{o,0}}{a_{o,1}+\dfrac{\veps_{o,1}}{a_{o,2}+\ddots+\dfrac{\veps_{o,n-1}}{a_{o,n}}}}.
$$
Let $p_{o,-1}=1$, $q_{o,-1}=0$, $p_{o,0}=a_{o,0}$ and $q_{o,0}=1$.
We recall the following basic properties of $p_{o,n}/q_{o,n}$.
\begin{rmk}\label{rk:p/qO}
Since we have
$$
\begin{pmatrix}p_{o,n} & p_{o,n-1} \\ q_{o,n} & q_{o,n-1} \end{pmatrix} = 
\begin{pmatrix}a_{o,0} & 1\\ 1 & 0 \end{pmatrix}
\begin{pmatrix}a_{o,1} & 1\\ \veps_{o,0} & 1 \end{pmatrix}
\begin{pmatrix}a_{o,2} & 1\\ \veps_{o,1} & 1 \end{pmatrix}
\cdots
\begin{pmatrix}a_{o,n} & 1 \\ \veps_{o,n-1} & 1 \end{pmatrix},
$$
the following recursive formulas holds:
$$p_{o,n} = a_{o,n}p_{o,n-1}+\veps_{o,n-1}p_{o,n-2} \quad \text{and}\quad q_{o,n} = a_{o,n}q_{o,n-1}+\veps_{o,n-1}q_{o,n-2} \quad \text{ for } n\ge 1,$$
and
\begin{equation}\label{eq:det.pq} 
p_{o,n}q_{o,n-1} - p_{o,n-1}q_{o,n} = (-1)^{n+1}\veps_{o,0}\veps_{o,1}\veps_{o,2}\cdots \veps_{o,n-1}.
\end{equation}
By using $a_{o,n}+\veps_{o,n} x_{o,n} = 1/x_{o,n-1}$, we have
\begin{equation}\label{eq:x_n} 
x = \cfrac{p_{o,n}+p_{o,n-1}\veps_{o,n}x_{o,n}}{q_{o,n}+q_{o,n-1}\veps_{o,n}x_{o,n}} \quad\text{and}\quad
x_{o,n} = -\veps_{o,n} \frac{q_{o,n}x-p_{o,n}}{q_{o,n-1}x- p_{o,n-1}}.
\end{equation}
Thus we have
\begin{equation}\label{eq:qx-p}
\beta_{o,n} =  x_{o,0} x_{o,1}x_{o,2}\cdots x_{o,n} = (q_{o,n} x - p_{o,n})  \prod_{i=0}^n (-\veps_{o,i}) 
= \cfrac{1}{q_{o,n+1}+\veps_{o,n+1}q_{o,n}x_{o,n+1}}.
\end{equation}
\end{rmk}

Note that $q_{o,n}$ can be less than $q_{o,n-1}$ if $a_{o,n}=1$ and $\veps_{o,n-1}=-1$.
We note however that for all $n$ we have 
\begin{align}
 \label{eq:q.O1} & \veps_{o,n}q_{o,n-1}q_{o,n}^{-1}  > g^{-1}-2, \\
 \label{eq:q.O2} & q_{o,n}  > g q_{o,n-1}.
\end{align}
Thus we have 
\begin{equation}\label{eq:q.O3} g^{-1}-2<\veps_{o,n} \frac{q_{o,n-1}}{q_{o,n}}<g^{-1}.\end{equation}
Combining \eqref{eq:qx-p} with \eqref{eq:q.O1}, we have 
\begin{equation}\label{eq:beq}
1-g \le \beta_{o,n}(x)q_{o,n+1}\le g^{-1}.
\end{equation}
If $x_{o,k} > g$ for some $0\le k \le n$, then $x_{o,k+1}={(x_{o,k})}^{-1}-1$.
Since $x_{o,k}x_{o,k+1} = 1 - x_{o,k} < 1 - g = g^2$, we have
\begin{equation}\label{eq:beO}
\beta_{o,n}\le g^{n} \quad \text{ for }n\ge 1.
\end{equation}

\begin{rmk}\label{rk:sum.1/q}
Let $\nu>0$.
From \eqref{eq:beq} and \eqref{eq:beO}, we have $q_{o,n}\ge \frac{1-g}{g^{n-1}}$, thus
\begin{equation*}
\sum_{n=0}^\infty \frac{1}{q_{o,n}^\nu}\le \sum_{n=0}^\infty \frac{g^{\nu(n-1)}}{(1-g)^\nu} =: c_{1,\nu}.
\end{equation*}
Since $\log q_{o,n} \lesssim_\nu q_{o,n}^{\nu/2}$.
Thus, there exists a positive constant $c_\nu$ such that 
\begin{equation*}
\sum_{n=0}^\infty \frac{\log q_{o,n}}{q_{o,n}^\nu}\le  {c_\nu} \sum_{n=0}^\infty \frac{1}{q_{o,n}^{\nu/2}} \le  {c_\nu} \sum_{n=0}^\infty\frac{g^{\nu(n-1)/2}}{(1-g)^{\nu/2}} =: c_{2,\nu}.
\end{equation*}
Similarly, since $Q_n \ge 2^{(n-1)/2}$, there are constants $C_1,C_2>0$ such that
\begin{equation}\label{eq:sum.1/qR}
\sum_{n=0}^\infty Q_n^{-\nu} \le C_{1,\nu} \quad \text{ and } \quad \sum_{n=0}^\infty Q_n^{-\nu}\log Q_n \le C_{2,\nu}.
\end{equation}
\end{rmk}

\subsection{Uniform boundedness}
In this subsection, we show the first assertion of Theorem~\ref{main2}.
From \eqref{eq:B-logq/q}, the following two propositions imply $B_{1,\nu}-B_{odd,\nu}\in L^\infty$.

\begin{prop}\label{pr:logq/qO}
There exists $C_\nu>0$ such that for all $x\in\mathbb{R}\backslash\mathbb{Q}$, 
$$\left|\sum_{n=0}^\infty \frac{\log Q_{n+1}}{Q_n^\nu} - \sum_{n=0}^\infty\frac{\log q_{o,n+1}}{q_{o,n}^\nu}\right|\le C_\nu,$$
where $P_n/Q_n$ is the principal convergent of the RCF.
\end{prop}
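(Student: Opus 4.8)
The plan is to compare the two convergent denominators term by term using the classical fact that, along the RCF orbit, the NICF and RCF convergents interleave, and along the OCF orbit the OCF convergents are "among" the NICF ones up to bounded index shift. Concretely, I would first recall (or re-derive from Remark~\ref{rk:p/qO} and the recursions $q^o_n = a^o_n q^o_{n-1} + \veps^o_{n-1}q^o_{n-2}$) that the sequence $(q^o_n)$ is, up to interleaving with at most boundedly many extra terms, a subsequence of $(Q_m)$: every OCF convergent $p^o_n/q^o_n$ is a (best-type) convergent of the RCF, and conversely each RCF convergent $P_m/Q_m$ equals some $p^o_n/q^o_n$ or is obtained from one via a single mediant step. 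This is the content used in \cite{KN01, KLL}; I would cite it rather than reprove it. Combined with \eqref{eq:q.O2}, which gives $q^o_n > g\, q^o_{n-1}$, hence $q^o_n$ grows at least geometrically (as already exploited in Remark~\ref{rk:sum.1/q}), the "missing" or "extra" indices between consecutive common values contribute only geometrically small tails.

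The key steps, in order, are: (1) Set up a monotone index correspondence $n \mapsto m(n)$ with $q^o_n \asymp Q_{m(n)}$ and $m(n+1) - m(n) \in \{1,2\}$ (bounded gap), using the interleaving property. (2) Use \eqref{eq:beq} together with $\beta^o_n q^o_{n+1}\asymp 1$ and the analogous $\beta_m Q_{m+1}\asymp 1$ for the RCF to convert $\log q^o_{n+1}/(q^o_n)^\nu$ and $\log Q_{m+1}/Q_m^\nu$ into comparable quantities: both are, up to a bounded multiplicative and additive error, of the form $\beta^\nu \log(1/\beta')$ evaluated at comparable points of the respective orbits, and the orbits are conjugate by an element of $\pslz$ after finitely many steps. (3) Bound the difference of the two series by a sum of two pieces — the terms where the indices genuinely coincide (where the summands differ by at most $O(1/Q^\nu)$ because $\log Q_{m+1}/Q_m^\nu$ and $\log q^o_{n+1}/(q^o_n)^\nu$ differ by a term controlled by the bounded index gap, and $\sum Q_m^{-\nu}<\infty$ by \eqref{eq:sum.1/qR}), and the "extra" RCF terms not hit by the correspondence, which by the geometric growth \eqref{eq:q.O2} and $Q_m \ge 2^{(m-1)/2}$ form a geometrically convergent series bounded by $C_{1,\nu}$. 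Summing these uniform-in-$x$ bounds yields the constant $C_\nu$.

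The main obstacle I expect is Step (1)–(2): making the index correspondence between the OCF and RCF convergents precise and uniform, including the case $a^o_n = 1$, $\veps^o_{n-1} = -1$ where $q^o_n < q^o_{n-1}$ (flagged right after Remark~\ref{rk:p/qO}). Here one must be careful that although $q^o_n$ can momentarily decrease, \eqref{eq:q.O2} forbids two consecutive decreases of large magnitude, so the comparison with a geometrically growing RCF subsequence still goes through after grouping consecutive OCF steps in blocks of bounded length. Once the block structure is fixed, everything else is the routine estimate $\bigl|\log Q_{m+1}/Q_m^\nu - \log q^o_{n+1}/(q^o_n)^\nu\bigr| \lesssim_\nu Q_m^{-\nu}\log Q_m + (\text{geometric tail})$, summed against \eqref{eq:sum.1/qR} and the OCF analogue in Remark~\ref{rk:sum.1/q}. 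An alternative, perhaps cleaner route — which I would mention as a remark — is to bypass convergents entirely and compare $\sum \log Q_{n+1}/Q_n^\nu$ directly with the defining series of $B_{o,\nu}$ via \eqref{eq:B-logq/q} and \eqref{eq:beq}, reducing Proposition~\ref{pr:logq/qO} and its companion to a single statement about the Brjuno sums; but this still requires the same orbit-comparison input.
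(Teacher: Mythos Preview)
Your overall strategy---match the two series through an index correspondence between OCF and RCF convergents, then bound the mismatched terms by the summable quantities in Remark~\ref{rk:sum.1/q} and \eqref{eq:sum.1/qR}---is exactly the paper's. But the structural input you propose in Step~(1) is misstated, and that is a genuine gap. It is \emph{not} true that every OCF convergent $p^o_k/q^o_k$ is an RCF convergent: by the Hartono--Kraaikamp result (Lemma~\ref{le:p/q.O}, reference \cite{HK02}, not \cite{KN01} or \cite{KLL}), an OCF convergent is either some $P_n/Q_n$ or the mediant $(P_{n+1}+P_n)/(Q_{n+1}+Q_n)$, and in the latter case the \emph{next} OCF convergent is $P_{n+1}/Q_{n+1}$. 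Conversely, if an RCF convergent $P_n/Q_n$ is skipped by the OCF, then $a_{n+1}=1$, so $Q_{n+1}\le 2Q_n$. This trichotomy is what makes the termwise comparison work; your version, with all OCF convergents assumed to be RCF convergents, would fail precisely on the mediant terms.

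Once you replace your claimed interleaving with Lemma~\ref{le:p/q.O}, the argument becomes short and elementary, and you can drop the detours. There is no need to pass through the NICF, nor through the $\beta$-quantities in Step~(2): the paper simply observes that (i) when $q^o_k=Q_n$, the next $q^o_{k+1}$ is $Q_{n+1}$ or $Q_{n+1}+Q_n$, so $|\log q^o_{k+1}/(q^o_k)^\nu - \log Q_{n+1}/Q_n^\nu|\le (\log 2)/Q_n^\nu$; (ii) when $p^o_k/q^o_k$ is a mediant, $q^o_k\ge Q_{n+1}$ and $q^o_{k+1}=Q_{n+1}$, so the extra OCF term is at most $(\log Q_{n+1})/Q_{n+1}^\nu$; (iii) when $P_n/Q_n$ is skipped, $a_{n+1}=1$ gives $\log Q_{n+1}/Q_n^\nu\le (\log Q_n+\log 2)/Q_n^\nu$. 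Summing these against $C_{1,\nu}$ and $C_{2,\nu}$ finishes the proof in three lines. Your worry about the non-monotonicity of $(q^o_n)$ when $a^o_n=1$, $\veps^o_{n-1}=-1$ is real but is automatically absorbed by this case analysis, since the mediant case already accounts for it.
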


\begin{prop}\label{pr:logq/qO2}
There exists $C_\nu>0$ such that for all $x\in\mathbb{R}\backslash \mathbb{Q}$,
$$\left|B_{odd,\nu}(x)-\sum_{n=0}^\infty\frac{\log q_{o,n+1}}{q_{o,n}^\nu}\right|\le C_\nu.$$
\end{prop}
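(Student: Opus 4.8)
The plan is to compare the two series in the statement \emph{term by term}, i.e.\ to bound
\[
\sum_{n\ge0}\left|(\beta^o_{n-1})^\nu(-\log x^o_n)-\frac{\log q^o_{n+1}}{(q^o_n)^\nu}\right|
\]
by a constant depending only on $\nu$. Since $B_{o,\nu}$ and $\sum_n\log q^o_{n+1}/(q^o_n)^\nu$ depend only on $\{x\}$, I may assume $x\in(0,1)\setminus\bQ$, so $x^o_0=x$ and $x^o_n\in(0,1)$ for all $n$. Write $c_n:=\beta^o_{n-1}(x)\,q^o_n$, so that $(\beta^o_{n-1})^\nu=c_n^\nu/(q^o_n)^\nu$; by \eqref{eq:qx-p} one has $c_n=\big(1+\veps^o_n (q^o_{n-1}/q^o_n)x^o_n\big)^{-1}$, hence $c_0=1$ (as $q^o_{-1}=0$), and $1-g\le c_n\le 1/g$ by \eqref{eq:beq}. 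Two elementary observations drive the argument: first, $|c_n-1|=c_n\,(q^o_{n-1}/q^o_n)\,x^o_n\le x^o_n/g^2$, using \eqref{eq:beq} and \eqref{eq:q.O2}; second, since $x^o_n=\beta^o_n/\beta^o_{n-1}$, substituting $\beta^o_{n-1}=c_n/q^o_n$ and $\beta^o_n=c_{n+1}/q^o_{n+1}$ gives
\[
-\log x^o_n=\log q^o_{n+1}-\log q^o_n+\log\frac{c_n}{c_{n+1}},\qquad \Big|\log\tfrac{c_n}{c_{n+1}}\Big|\le\log\tfrac1{g(1-g)}.
\]

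Next I would expand the $n$-th term. Using $(\beta^o_{n-1})^\nu=c_n^\nu/(q^o_n)^\nu$ together with the identity above,
\[
(\beta^o_{n-1})^\nu(-\log x^o_n)-\frac{\log q^o_{n+1}}{(q^o_n)^\nu}
=\frac{1}{(q^o_n)^\nu}\Big[(c_n^\nu-1)\log q^o_{n+1}-c_n^\nu\log q^o_n+c_n^\nu\log\tfrac{c_n}{c_{n+1}}\Big].
\]
The terms $c_n^\nu\log q^o_n$ and $c_n^\nu\log(c_n/c_{n+1})$ are immediately controlled by $g^{-\nu}\log q^o_n$ and $g^{-\nu}\log\frac1{g(1-g)}$. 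The delicate term is $(c_n^\nu-1)\log q^o_{n+1}$: here $\log q^o_{n+1}$ can be arbitrarily large relative to $q^o_n$ (when $a^o_{n+1}$ is huge), so bounding $|c_n^\nu-1|$ by a constant is hopeless. To handle it I would write $\log q^o_{n+1}=\log q^o_n+\log(q^o_{n+1}/q^o_n)$ and combine: (a) $|c_n^\nu-1|\le D_\nu|c_n-1|\le D_\nu x^o_n/g^2$, where $D_\nu$ is the Lipschitz constant of $t\mapsto t^\nu$ on $[1-g,1/g]$; (b) from \eqref{eq:AOE}, \eqref{eq:digitOE} and the recursion in Remark~\ref{rk:p/qO}, $q^o_{n+1}/q^o_n=a^o_{n+1}+\veps^o_n q^o_{n-1}/q^o_n\le a^o_{n+1}+g^{-1}$, while $1/x^o_n=a^o_{n+1}+\veps^o_{n+1}x^o_{n+1}\ge a^o_{n+1}-1$, so $x^o_n\le 1/(a^o_{n+1}-1)$ when $a^o_{n+1}\ge 3$, and $x^o_n\le 1$, $q^o_{n+1}/q^o_n\le g^{-2}$ when $a^o_{n+1}=1$; hence $x^o_n\log(q^o_{n+1}/q^o_n)$ is bounded by an absolute constant. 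Then $|(c_n^\nu-1)\log q^o_{n+1}|\le |c_n^\nu-1|\log q^o_n+C$, and the $n$-th term is at most $(C'_\nu\log q^o_n+C''_\nu)/(q^o_n)^\nu$ with constants depending only on $\nu$.

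Finally I would sum over $n\ge0$ and invoke Remark~\ref{rk:sum.1/q}, which gives $\sum_n (q^o_n)^{-\nu}\le c_{1,\nu}$ and $\sum_n (q^o_n)^{-\nu}\log q^o_n\le c_{2,\nu}$; this bounds the whole series by a constant $C_\nu$, proving the proposition (the bound is uniform in the number of summands, hence controls the difference of partial sums and survives when both sides are $\infty$). Together with Proposition~\ref{pr:logq/qO} and \eqref{eq:B-logq/q} this yields $B_{1,\nu}-B_{o,\nu}\in L^\infty$, i.e.\ the first assertion of Theorem~\ref{main2}. The main obstacle is precisely the control of $(c_n^\nu-1)\log q^o_{n+1}$: the required cancellation is genuine but only becomes visible once one exploits that a large partial quotient $a^o_{n+1}$ forces $x^o_n$, hence $|c_n-1|$, to be correspondingly small — treating $|c_n^\nu-1|$ as an $O(1)$ factor in a naive term-by-term comparison fails.
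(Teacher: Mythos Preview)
Your proof is correct and follows essentially the same route as the paper: both identify the delicate term as $\bigl((q^o_n)^{-\nu}-(\beta^o_{n-1})^\nu\bigr)\log q^o_{n+1}$ and exploit the identity $q^o_n\beta^o_{n-1}+\veps^o_n q^o_{n-1}\beta^o_n=1$ (in your notation, $|c_n-1|\lesssim x^o_n$) to absorb the large $\log q^o_{n+1}$. The only noteworthy difference is packaging: the paper keeps $\beta^o_n\log q^o_{n+1}\lesssim (\log q^o_{n+1})/q^o_{n+1}$ intact and then splits into the cases $\nu\ge1$ and $0<\nu<1$ when applying the mean value inequality, whereas your use of the Lipschitz constant of $t\mapsto t^\nu$ on the fixed compact interval $[1-g,1/g]$ together with the split $\log q^o_{n+1}=\log q^o_n+\log(q^o_{n+1}/q^o_n)$ handles all $\nu>0$ uniformly---a mild simplification.
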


Hartono and Kraaikamp \cite{HK02} described a relation between the RCF and the OCF and gave an algorithm for obtaining the OCF-expansion from the RCF-expansion.
By their result,  {we have} the following lemma immediately, see also \cite[Section 2.1 and Theorem 2.1]{Har03} and \cite{Kra91}.

\begin{lem}\label{le:p/q.O}
\begin{enumerate}
\item\label{it:leO1} If $p_{o,k}/q_{o,k} = P_n/Q_n$ for some $n$, then $p_{o,k+1}/q_{o,k+1} = P_{n+1}/Q_{n+1}$ or $(P_{n+1}+P_n)/(Q_{n+1}+Q_n)$.
\item\label{it:leO2} If $p_{o,k}/q_{o,k}$ is not a RCF convergent, then $p_{o,k}/q_{o,k} = (P_{n+1}+P_{n})/(Q_{n+1}+Q_n)$ and $p_{o,k+1}/q_{o,k+1} = P_{n+1}/Q_{n+1}$ for some $n$.
\end{enumerate}
\end{lem}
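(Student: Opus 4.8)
The plan is to obtain all three statements from Hartono and Kraaikamp's description of how the OCF-expansion of $x$ is produced from its RCF-expansion $[0;a_1,a_2,\dots]$, by tracking the effect of that procedure on the sequence of convergents. The two rewriting identities on continued fractions involved are the \emph{singularization} and the \emph{insertion},
\[
b+\cfrac{1}{1+\cfrac{1}{c+\xi}}=(b+1)-\cfrac{1}{(c+1)+\xi}
\qquad\text{and}\qquad
b+\cfrac{1}{c+\xi}=(b+1)-\cfrac{1}{1+\cfrac{1}{(c-1)+\xi}},
\]
the first applicable when a partial quotient equals $1$, the second introducing a fresh partial quotient $1$; in both, the minus sign becomes one of the signs $\veps^o_n$. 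By \cite{HK02} (see also \cite[Section 2.1 and Theorem 2.1]{Har03} and \cite{Kra91}), the OCF-expansion of $x$ is the result of a locally finite sequence of such moves applied to the RCF-expansion according to explicit parity conditions on the $a_i$, arranged so that every resulting partial quotient is odd.

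Next, I would use that each move alters the list of convergents in exactly one of two simple ways, checked by a direct computation with the recursion of Remark~\ref{rk:p/qO}: a singularization acting on a block $[\dots,a_m,1,a_{m+2},\dots]$ — so in the original RCF $a_{m+1}=1$, whence $P_{m+1}=P_m+P_{m-1}$ and $Q_{m+1}=Q_m+Q_{m-1}$ — deletes the single convergent $P_m/Q_m$ and leaves all others untouched, while an insertion between $a_m$ and $a_{m+1}$ inserts the single Farey mediant $(P_m+P_{m-1})/(Q_m+Q_{m-1})$ immediately before $P_m/Q_m$ and changes nothing else. Hence every OCF convergent $p^o_k/q^o_k$ is either an RCF convergent $P_n/Q_n$ or the mediant $(P_{n+1}+P_n)/(Q_{n+1}+Q_n)$ of two consecutive ones, and the fraction that follows a given one in the OCF list is dictated by the local shape of the relevant move. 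Reading this off: the last statement holds since the only convergent-deleting move is a singularization, which deletes $P_n/Q_n$ only when applied at $a_{n+1}$, forcing $a_{n+1}=1$; statement~\eqref{it:leO2} holds since a $p^o_k/q^o_k$ that is not an RCF convergent must be such a mediant, and the insertion creating $(P_{n+1}+P_n)/(Q_{n+1}+Q_n)$ places it directly before $P_{n+1}/Q_{n+1}$; and statement~\eqref{it:leO1} holds since the successor of $P_n/Q_n$ is either $P_{n+1}/Q_{n+1}$ (when no insertion falls between $P_n/Q_n$ and $P_{n+1}/Q_{n+1}$, a case also covering $a_{n+1}=1$, where $(P_{n+1}+P_n)/(Q_{n+1}+Q_n)=P_{n+1}/Q_{n+1}$) or the mediant $(P_{n+1}+P_n)/(Q_{n+1}+Q_n)$.

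The delicate point — and the natural place for the argument to break — is this middle step: making the convergent-tracking airtight requires invoking the Hartono--Kraaikamp algorithm precisely enough to know where singularizations versus insertions are applied and to control the interference of neighbouring moves, since a singularization at $a_{m+1}$ raises $a_m$ and $a_{m+2}$ by one and an insertion lowers $a_{m+1}$ by one, so the partial quotients seen by subsequent moves differ from the original RCF ones; in particular one must know that two insertions never fall close enough to produce a non-mediant intermediate fraction. Since only the coarse, convergent-level conclusions are needed here, the efficient route is to cite \cite{HK02, Har03} for the algorithm and for the identification of its convergents, and then just read off the three statements as above. A more self-contained alternative would be to induct on $k$ using the matrix identity of Remark~\ref{rk:p/qO} together with the elementary relations $A_o=G$ on $\bigcup_k I_{2k-1}$ and $A_o=1-G$ on $\bigcup_k I_{2k}$, comparing one OCF step with one or two RCF steps according to the parity of $\lfloor 1/x\rfloor$, at the cost of a fairly long case analysis.
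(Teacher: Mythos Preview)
Your approach is essentially the same as the paper's: the paper gives no proof at all, stating only that the lemma follows immediately from Hartono and Kraaikamp's algorithm \cite{HK02} (with further references to \cite[Section~2.1 and Theorem~2.1]{Har03} and \cite{Kra91}). Your proposal expands on this by spelling out the singularization/insertion mechanism and its effect on the list of convergents, which is correct and is exactly the content of those references; your caveat about controlling the interference of neighbouring moves is well placed, and citing \cite{HK02, Har03} for the precise algorithm is the right resolution.
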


From the above lemma, 
$\{p_k^o/q_k^o\}$ is a subsequence of
\begin{equation}\label{eq:seq.o}
\frac{P_{-1}}{Q_{-1}},~ \frac{P_{-1}+P_0}{Q_{-1}+Q_0},~\frac{P_0}{Q_0},~ \frac{P_0+P_1}{Q_0+Q_1},~ \frac{P_1}{Q_1},\cdots,\frac{P_{n}}{Q_{n}},~\frac{P_{n}+P_{n+1}}{Q_{n}+Q_{n+1}},~\frac{P_{n+1}}{Q_{n+1}},\cdots
\end{equation}
containing all $P_n/Q_n$ and containing some $(P_{n}+P_{n+1})/(Q_{n}+Q_{n+1})$.
Note that the sequence as in \eqref{eq:seq.o} is not in order of denominators.
By using the above lemma, we can show that the $\nu$-Brjuno condition  {of the RCF} is equivalent to $\sum_{n=0}^\infty (\log q_{o,n+1})/ {q_{o,n}^\nu}<\infty$.

\begin{proof}[Proof of Proposition~\ref{pr:logq/qO}]
By Lemma~\ref{le:p/q.O}, 
if $p_{k}/q_{k}$ is a RCF convergent $P_n/Q_n$, then we have
$$\left|\frac{\log q_{o,k+1}}{(q_{o,k})^\nu} - \frac{\log Q_{n+1}}{Q_n^\nu}\right|\le \frac{\log 2}{Q_n^\nu}.$$
If $p_{o,k}/q_{o,k}$ is not a RCF convergent, then we have
$$ \frac{\log q_{o,k+1}}{(q_{o,k})^\nu} \le \frac{\log Q_{n+1}}{Q_{n+1}^\nu}.$$
Combining with Remark~\ref{rk:sum.1/q}, we have
$$\left|\sum_{k=0}^\infty \frac{\log q_{o,k+1}}{q_{o,k}^\nu} - \sum_{n=0}^\infty \frac{\log Q_{n+1}}{Q_n^\nu}\right| \le \sum_{n=0}^\infty \frac{\log Q_n + \log 2}{Q_n^\nu} \le C_{1,\nu} \log 2 + C_{2,\nu}.$$
\end{proof}

\begin{proof}[Proof of Proposition~\ref{pr:logq/qO2}]
We have
\begin{equation*}\begin{split}
&  - B_{odd,\nu}(x) + \sum_{n=0}^\infty \frac{\log q_{o,n+1}}{q_{o,n}^\nu} 
 = \sum_{n=0}^\infty \beta_{o,n-1}^\nu \log \frac{\beta_{o,n}}{\beta_{o,n-1}} + \sum_{n=0}^\infty \frac{\log q_{o,n+1}}{q_{o,n}^\nu}  \\
& = \sum_{n=0}^\infty \beta_{o,n-1}^\nu \log \beta_{o,n} q_{o,n+1} 
- \sum_{n=0}^\infty \beta_{o,n-1}^\nu \log \beta_{o,n-1} 
+ \sum_{n=0}^\infty \left(\frac{1}{q_{o,n}^\nu} - \beta_{o,n-1}^\nu\right) \log q_{o,n+1}.
\end{split}\end{equation*}
By \eqref{eq:beq} and Remark~\ref{rk:sum.1/q}, we have
$$\left|  \sum_{n=0}^\infty \beta_{o,n-1}^\nu\log \beta_{o,n} q_{o,n+1}  \right| \le \sum_{n=0}^\infty \frac{\log(1/g)}{g^\nu q_{o,n}^\nu} \le \frac{ \log(1/g)}{g^\nu}c_{1,\nu},$$
$$\left| \sum_{n=0}^\infty \beta_{o,n-1}^\nu \log \beta_{o,n-1} \right|\le \sum_{n=0}^\infty \frac{\log{(1/g)}+\log{q_{o,n}}}{ {g^\nu}q_{o,n}^\nu} \le \frac{c_{1,\nu}\log (1/g)+c_{2,\nu}}{g^\nu}.$$
From \eqref{eq:qx-p}, we have $q_{o,n}\beta_{o,n-1} + \veps_{o,n} q_{o,n-1} \beta_{o,n} = 1$ for all $n\ge 0$.
Thus
\begin{align}
&\nonumber \left| \sum_{n=0}^\infty \left(\frac{1}{(q_n^o)^\nu} - \beta_{o,n-1}^\nu\right) \log q_{o,n+1}\right|  \\
&\label{eq:logq/qO} \lesssim_\nu \sum_{n=0}^{\infty}{\max\left\{\frac{1}{q_{o,n}^{\nu-1}}, \beta_{o,n-1}^{\nu-1}\right\}}\left|\veps_{o,n} \frac{q_{o,n-1}}{q_{o,n}} \beta_{o,n} \right|\log q_{o,n+1}.
\end{align}
If $\nu\ge 1$, then \eqref{eq:logq/qO} is bounded above by 
$$\sum_{n=0}^\infty \beta_{o,n}\log q_{o,n+1} \le \frac{1}{g}\sum_{j=0}^\infty \dfrac{\log q_{o,n+1}}{q_{o,n+1}} \le \frac{c_{ {2,1}}}{g}.$$
If $0<\nu<1$, then 
\eqref{eq:logq/qO} bounded above by
\begin{align*}
& \sum_{n=0}^\infty \max\left\{\frac{1}{\beta_{o,n-1}^{1-\nu}}, q_{o,n}^{1-\nu}\right\} \dfrac{q_{o,n-1}}{q_{o,n}}\beta_{o,n}\log q_{o,n+1} 
 \le \sum_{n=0}^\infty \max \left\{\frac{\beta_{o,n}}{\beta_{o,n-1}^{1-\nu}},\frac{q_{o,n-1}}{gq_{o,n}^\nu q_{o,n+1}}\right\}\log q_{o,n+1} \\
& \le \sum_{n=0}^\infty \max \left\{\beta_{o,n}^{\nu},\frac{1}{g (q_{o,n+1})^\nu}\right\}\log q_{o,n+1} 
\le  \sum_{n=0}^\infty \frac{1}{g q_{o,n+1}^\nu}\log q_{o,n+1} \le \frac{c_{1,\nu}}{g}.
\end{align*}

\end{proof}

\subsection{H\"{o}lder continuity}
In this subsection, we prove that $B_{ {1,\nu}}-B_{odd, \nu}$ is $\nu/2$-H\"{o}lder continuous for  {$0<\nu\le 2$}.
In order to prove it, we investigate an operator which is derived from the functional equation of $B_{odd, \nu}$ as in \eqref{eq:fun.o}.
Let us denote by $X$ the space of measurable functions $f$ satisfies \eqref{eq:even2Z}.
Let $T_{odd,\nu}f$ be a function in $X$ whose values on $(0,1)$ are defined by
\begin{equation}\label{eq:TO}
T_{odd,\nu} f(x) = x^\nu f(1-x^{-1}) \text{ for } x\in(0,1).
\end{equation}
For $p\in [1,\infty]$, we consider the space 
$$X_{odd,p} := \{f \text{ satisfies \eqref{eq:even2Z} and } f|_{[0,1]}\in L^p([0,1],m_{odd})\}$$
equipped with the $p$-norm $\|f \|_p =\left(\int_0^1 |f|^p dm_{odd}\right)^{1/p}$.
The OCF-Brjuno function $B_{odd,\nu}$ is a function in an even $2\mathbb{Z}$-periodic function satisfying 
$$[(1-T_{odd,\nu})B_{odd,\nu}](x) = - \log(\|x\|_{2\mathbb{Z}}).$$
Since $m_{odd}$ is an absolutely continuous measure and its density function is bounded above and below, as {\it vector spaces} $L^p([0,1],dx)$ and $L^p([0,1],dm_{odd})$ coincide, and we ensure that $-\log (\|x\|_{2\mathbb{Z}})\in X_{odd,p}$ for all $p\ge 1$.
The following theorem tells us that $1-T_{odd,\nu}$ is invertible and $B_{odd,\nu}$ is a unique solution of the above functional equation on the space $X_{odd,p}$ for every $p\ge 1$.

\begin{prop} \label{pr:spec.O}
For all $\nu\ge 0$ and all $p\ge 1$, the operator $T_{odd,\nu}$ is a bounded linear operator on $X_{odd,p}$, and 
$\rho(T_{odd,\nu})\le g^\nu,$
where $\rho(T)$ is the spectral radius of an operator $T$.
\end{prop}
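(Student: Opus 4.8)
\textbf{Proof proposal for Proposition~\ref{pr:spec.O}.}

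The plan is to prove the two assertions separately. For boundedness of $T_{o,\nu}$ on $X_{o,p}$, I would directly estimate the $L^p(m_o)$-norm of $T_{o,\nu}f$. Writing the $[0,1]$-integral as a sum over the intervals $I_n = [1/(n+1),1/n]$ and substituting $y = \veps_1^o(x)x^{-1}$ (so that on each $I_n$ the map $x\mapsto \veps_1^o(x)x^{-1}$ is a diffeomorphism onto its image, a subinterval of $[0,1]$ up to sign, with $|\mathrm{d}y/\mathrm{d}x| = x^{-2}$), one gets
\[
\int_0^1 |T_{o,\nu}f(x)|^p\,\mathrm{d}m_o(x) = \sum_n \int_{I_n} x^{\nu p}\,|f(\veps_1^o(x)x^{-1})|^p\,\rho_o(x)\,\mathrm{d}x,
\]
where $\rho_o$ is the (bounded above and below) density of $m_o$. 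After the change of variables this becomes $\sum_n \int |f(y)|^p\, y^{-2}\rho_o(1/y)\,(\text{something})\,\mathrm{d}y$ with weight comparable to $x^{\nu p + 2}$; since $x\le 1$ and $\nu p + 2 > 0$ the weight is bounded, and comparing $\rho_o(x)$ with $\rho_o(\veps_1^o(x)x^{-1})$ (both lie between the same two positive constants because the density is bounded above and below, using the $\veps$-symmetry built into $X_{o,p}$ via \eqref{eq:ft.cond.al}-type periodicity) yields $\|T_{o,\nu}f\|_{L^p(m_o)}\le C_{\nu,p}\|f\|_{L^p(m_o)}$. So $T_{o,\nu}$ is a bounded linear operator on each $X_{o,p}$.

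For the spectral radius bound $\rho(T_{o,\nu})\le g^\nu$, I would use the spectral radius formula $\rho(T_{o,\nu}) = \lim_{n\to\infty}\|T_{o,\nu}^n\|^{1/n}$ and estimate the iterate. Iterating the definition, $T_{o,\nu}^n f(x) = (\beta^o_{n-1}(x))^\nu f(\veps^o_n x^o_n)$ where $\beta^o_{n-1} = x^o_0\cdots x^o_{n-1}$ and $x^o_j = A_o^j(x)$; this is the same product that appears in the definition of $B_{o,\nu}$. The key input is inequality \eqref{eq:beO}, $\beta^o_n(x)\le g^n$ for $n\ge 1$, proved in the excerpt via the observation that whenever $x^o_k > g$ one has $x^o_k x^o_{k+1} = 1-x^o_k < g^2$ (and trivially $x^o_k\le 1\le g^{-1}\cdot g$ otherwise, so consecutive factors multiply to at most $g^2$ in all cases after pairing). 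Hence $(\beta^o_{n-1}(x))^\nu \le g^{(n-1)\nu}$ pointwise, which gives $\|T_{o,\nu}^n f\|_{L^p(m_o)}\le g^{(n-1)\nu}\,\|f\circ(\veps^o_n A_o^n)\|_{L^p(m_o)}$. It remains to bound the last norm by $C\|f\|_{L^p(m_o)}$ uniformly in $n$; this follows because $m_o$ is $A_o$-invariant (Schweiger, \cite{Sch82}) and the factor $\veps^o_n=\pm1$ is harmless thanks to the evenness condition \eqref{eq:ft.cond.al} respected by $X_{o,p}$. Therefore $\|T_{o,\nu}^n\|^{1/n}\le (g^{(n-1)\nu}C)^{1/n}\to g^\nu$, i.e. $\rho(T_{o,\nu})\le g^\nu$.

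The main obstacle is the change-of-variables bookkeeping in the boundedness step: one must be careful that the branches $x\mapsto \veps_1^o(x)x^{-1}$ on the intervals $I_{2k-1}$ and $I_{2k}$ map onto overlapping ranges, so that the images tile $[0,1]$ with bounded multiplicity (in fact each point of $[0,1]$ is covered by at most two branches, one with each sign of $\veps_1^o$), and to control the Jacobian weight $x^{\nu p+2}\rho_o(x)/\rho_o(\veps_1^o(x)x^{-1})$ uniformly — this is where the explicit two-sided bounds on the density $dm_o/dx$ are used. For the spectral radius step the only subtlety is ensuring the $L^p$-norm of $f$ composed with $\veps^o_n A_o^n$ is controlled uniformly in $n$; invariance of $m_o$ plus the density bounds (to absorb the $\pm$ sign) handles this cleanly, so that step is routine once \eqref{eq:beO} is in hand.
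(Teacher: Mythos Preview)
Your spectral-radius argument is essentially the paper's: iterate to get $T_{o,\nu}^n f(x)=(\beta^o_{n-1}(x))^\nu f(A_o^n(x))$, bound the weight by $g^{(n-1)\nu}$ via \eqref{eq:beO}, and use $A_o$-invariance of $m_o$ to control $\|f\circ A_o^n\|_{L^p(m_o)}$. Two small corrections: the iterate is $f(A_o^n(x))$, not $f(\veps^o_n x^o_n)$---the sign disappears already at the first step because $\bZ$-periodicity alone gives $f(\veps^o_1(x)x^{-1})=f(A_o(x))$ (check the two parity cases of $\lfloor 1/x\rfloor$); no evenness is needed, and indeed $X_{o,p}$ does not impose \eqref{eq:ft.cond.al}. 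Once this is straightened out, invariance of $m_o$ yields $\int|f(A_o^n(x))|^p\,dm_o=\int|f|^p\,dm_o$ exactly, with no extra constant.

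Where you differ from the paper is in treating boundedness separately via a branch-by-branch change of variables with Jacobian and density comparisons. This works but is unnecessary: the paper simply reads off boundedness as the case $n=1$ of the same estimate $\|T_{o,\nu}^n\|_p\le g^{(n-1)\nu}$, so both assertions come from one line. Your route has the merit of not relying on invariance for the $n=1$ step, but it introduces bookkeeping (overlapping images, multiplicity, density ratios) that the invariant-measure argument sidesteps entirely.
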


\begin{proof}
For given $p\ge 1$ and $f\in X_{odd,p}$, we have
\begin{equation}\label{eq:it.TO}
T_{odd,\nu}^{n}f(x) = \beta_{o,n-1}^{\nu}(x)f(A_{odd}^n(x)).
\end{equation}
By \eqref{eq:beO}, we have
$$\int_0^1|T_{odd,\nu}^n f(x)|^p \mathrm{d}m_{odd}(x) = \int_0^1\beta_{o,n-1}^{\nu p}(x)|f(A_{odd}^{n}(x))|^p \mathrm{d}m_{odd}(x)\le g^{(n-1) \nu p}\int_0^1|f|^p \mathrm{d}m_{odd}.$$
Then, the operator norm $\|T_{odd,\nu}^n \|_p\le g^{(n-1)\nu}$, which implies $\rho(T_{odd,\nu})\le g^{\nu}$.
\end{proof}

We call $I_n = [\frac{1}{n+1},\frac{1}{n}]$ for $n\in \mathbb{N}$ the \emph{branches of $A_{odd}$} on which the restriction of $A_{odd}$ is invertible.
\emph{The branches of $A_{odd}^n$} are defined by
$$I_{m_1} \cap A_{odd}^{-1} I_{m_2} \cap A_{odd}^{-2} I_{m_3} \cdots \cap A_{odd}^{-{(n-1)}} I_{m_n},$$
where $\{m_i\}_1^n \in \mathbb{N}^n$.

The \emph{splitting number $n(x,x')$} for $x,~x'\in[0,1]$  {is} the greatest integer $n$ such that $x$ and $x'$ belong to the same branch or to two adjacent branches of $A_{odd}^n$.
We denotes by $\delta(x,x') := n(x,x')-m$, where $m$ is the greatest integer such that $x$, $x'$ belong to the same branch of $A_{odd}^{m}$.
From now on, for $x,~x'\in [0,1]$, let us denote by 
\begin{equation*}\label{eq:drop.o}\begin{cases}
x_n =  {A_{odd}^n(x)}, ~ a_n = a_{o,n}(x), ~ \veps_n = \veps_{o,n}(x), ~ p_n = p_{o,n}(x), ~ q_n = q_{o,n}(x), \\
x_n' =  {A_{odd}^n(x')}, ~ a_n' = a_{o,n}(x'), ~ \veps_n' = \veps_{o,n}(x'), ~ p_n' = p_{o,n}(x'), ~ q_n' = q_{o,n}(x').
\end{cases}\end{equation*}
\begin{rmk}\label{rk:sp.n.O}
We write $n = n(x,x')$ and $\delta = \delta(x,x')$.
For all $\ell \le n-\delta$, we have
\begin{align}
\label{eq:l<n-del1}& a_\ell = a_\ell', ~ \veps_\ell = \veps_\ell', ~ p_\ell = p_\ell', ~ q_\ell = q_\ell', \\
\label{eq:l<n-del2}& |\beta_{o,\ell}(x) - \beta_{o,\ell}(x')| = |(q_\ell x - p_\ell) - (q_\ell' x' - p_\ell')| = q_\ell |x-x'| ~\text{(by \eqref{eq:qx-p})},\\
\label{eq:l<n-del3}& |x-x'| = |\beta_{o,\ell}(x)\beta_{o,\ell-1}(x') - \beta_{o,\ell-1}(x)\beta_{o,\ell}(x')| = | x_\ell -x_\ell ' |\beta_{o,\ell-1}(x)\beta_{o,\ell-1}(x').
\end{align}
Dividing by the following four cases, we will see that $\delta=0,1,2$ and a relation between $\{a_n,\veps_n, p_n,q_n\}$ and $\{a_n',\veps_n',p_n',q_n'\}$.
\begin{enumerate}[label=(\Alph*)]
\item \label{it:A} If the points $x,x'$ belong to the same branch of $A_{odd}^n$, then $\delta=0$.
\item \label{it:B} If the points $x,x'$ belong to the same branch of $A_{odd}^{n-1}$, and there is $k\ge1$ such that
\begin{equation}\label{eq:it:B}
\frac{1}{2k+2}\le x_{n-1} \le \frac{1}{2k+1} \le x_{n-1}' \le \frac{1}{2k},
\end{equation}
then $\delta=1$.
We have
$$a_n=a_n' = 2k+1, ~ p_n = p_n', ~ q_n = q_n', ~ \veps_n = 1, ~\veps_n'=-1.$$
Let $x'' : = p_n/q_n$  {and $x_n'': = A_{odd}^n(x'')$}.  {Then $x''$ is between $x$ and $x'$.}
We have $x_{n-1}'' = (2k+1)^{-1}$, $\beta_{o,n-1}(x'')=q_n^{-1}$, $x_n'' = 0$.
 {By \eqref{eq:det.pq},} we have $ |x-x'' | =  |(q_nx-p_n)(q_{n-1}x''-p_{n-1})-(q_{n-1}x-p_{n-1})(q_nx''-p_n) |$. 
 Combining with \eqref{eq:qx-p}, we have 
\begin{equation}\label{eq:B}
|x-x''| = q_n^{-1}\beta_{o,n}(x) \quad \text{and} \quad |x'-x''| = q_n^{-1}\beta_{o,n}(x').
\end{equation}

\item \label{it:C} If the points $x,x'$ belong to the same branch of $A_{odd}^{n-2}$ and there is $k \ge 1$ such that
\begin{equation}\label{eq:it:C}
\frac{2}{4k+1} \le x_{n-2} \le \frac{1}{2k} \le x_{n-2}' \le \frac{2}{4k-1},
\end{equation}
then $\delta=2$, and $x_{n-1}, x_{n-1}' \in [1/2,1]$.
We have
$$a_{n-1} = a_{n-1}' + 2 = 2k+1,~ \veps_{n-1} = - \veps_{n-1}' = -1, ~ a_n = a_n' = 1, ~ \veps_n = \veps_n' = 1, $$
\begin{equation*}
\begin{cases}
q_{n-1} = q_{n-1}' + 2q_{n-2}, ~ \\
p_{n-1} = p_{n-1}' + 2p_{n-2}, 
\end{cases}
\begin{cases}
q_n = q_n' = (q_{n-1}+q_{n-1}')/2, \\
p_n = p_n' = (p_{n-1}+p_{n-1}')/2.
\end{cases}
\end{equation*}
Let $x'' := p_n/q_n$. Then $x''$ is between $x$ and $x'$.
We have $x_{n-2}'' = (2k)^{-1}$, $x_{n-1}'' = 1$ and $x_n'' = 0$.
Then $\beta_{o,n-2}(x'') = \beta_{o,n-1}(x'') = q_n^{-1}$.
 {By the same argument in \ref{it:B}, we have}
\begin{equation}\label{eq:C}
|x-x''| = q_n^{-1}\beta_{o,n}(x)\quad \text{and} \quad |x'-x''| = q_n^{-1}\beta_{o,n}(x').
\end{equation}

\item \label{it:D} If the points $x, x'$ belong to the same branch of $A_{odd}^{n-1}$ and there is $k\ge 1$ such that
\begin{equation}\label{eq:it:D}
\frac{1}{2k+1}\le x_{n-1} \le \frac{1}{2k} \le x_{n-1}' \le \frac{1}{2k-1},
\end{equation}
but at least one of $x_n$, $x_n'$ is not in $[1/2,1]$, i.e.
\begin{equation}\label{eq:D:max}
\max\{1-x_n, 1-x_n'\}\ge 1/2,
\end{equation}
then $\delta=1$.
We have
\begin{equation}\label{eq:D}
\left\{ \begin{split}
& a_n = 2k+1, ~\veps_n = -1, \\
& a_n' = 2k-1, ~\veps_n = 1,
\end{split}\right. \qquad
\left\{\begin{split}
& q_n = q_n' + 2 q_{n-1}, \\
& p_n = p_n' + 2 p_{n-1}.
\end{split}\right.
\end{equation}
Let $x'' := \frac{p_n+p_n'}{q_n+q_n'}$, $a_i'' := a_{o,i}(x'')$ and $\veps_i'':= \veps_{o,i}(x'')$.
Then $x''$ is between $x$ and $x'$. 
We have $a_i'' = a_i$ and $\veps_i'' = \veps_i$ for $i\le n-1$.
Since $x_{n-1}''= (2k)^{-1}$, $x_n'' = 1$, from the same argument in \ref{it:B}, we have
\begin{equation}\label{eq:D:x-x''}
\begin{split}
 |x-x''| = \frac{2\beta_{o,n-1}(x)}{q_n+q_n'}(1-x_n) \quad \text{ and }\quad
 |x'-x''| = \frac{2\beta_{o,n-1}(x')}{q_n+q_n'}(1-x_n').
\end{split}
\end{equation}
\end{enumerate}
\end{rmk}

The following theorems extend the results showed in \cite{MMY97} for the NICF map $A_{1/2}$ to the OCF map $A_{odd}$.
The theorems tell us how the operator $T_{odd,\nu}$ preserves the H\"older continuity.
\begin{thm}\label{th:op1}
Let $f\in \mathcal{C}^\eta_{[0,1]}$ for $0<\eta\le 1$. Let $\overline{\eta} = \min\{\eta,\nu/2\}$.
\begin{enumerate}
\item\label{it:th:op1.1} For any $m\ge 0$ and $\nu>0$, $T_{odd,\nu}^m f$ is $\overline{\eta}$-H\"{o}lder continuous.
\item\label{it:th:op1.2} If $\nu>2\eta$ (thus $\overline{\eta}=\eta$), then $T_{odd,\nu}$ is a bounded linear operator on $\mathcal{C}^\eta_{[0,1]}$ with $\rho(T_{odd,\nu}|_{\mathcal{C}^\eta_{[0,1]}})\le g^{\nu-2\eta}<1$.
Therefore, the operator ${\bf{B}_{odd,\nu}} := (1-T_{odd,\nu})^{-1}$ is  {well-}defined in  {$\mathcal{C}^{\eta}_{[0,1]}$} and fulfills ${\bf{B}_{odd,\nu}} = \sum_{m=0}^\infty T_{odd,\nu}^m$.
\item\label{it:th:op1.3} If $\nu=2\eta$ (thus $\overline{\eta} = \eta = \nu/2$), then there exists $c_{3,\nu}>0$ such that
$$\|T_{odd,\nu}^m\|_{\mathcal{C}^{\nu/2}} \le c_{3,\nu}~ \text{ for all } m\ge 0.$$ 
\end{enumerate}
\end{thm}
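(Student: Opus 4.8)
The plan is to deduce all three assertions from a single quantitative Hölder estimate on $T_{o,\nu}^m$, refining the $L^p$-computation behind Proposition~\ref{pr:spec.O}. By \eqref{eq:it.TO} we have $T_{o,\nu}^m f(x)=(\beta^o_{m-1}(x))^\nu f(A_o^m(x))$, so $\|T_{o,\nu}^m f\|_{\cC^0}\le g^{(m-1)\nu}\|f\|_{\cC^0}$ by \eqref{eq:beO}, and it remains to control $|T_{o,\nu}^m f(x)-T_{o,\nu}^m f(x')|$ for $x,x'\in[0,1]$. I would do this by the case analysis of \cite{MMY97}, splitting on the position of $m$ relative to $n=n(x,x')$ and $\delta=\delta(x,x')$, and invoking Remark~\ref{rk:sp.n.O} and Lemma~\ref{le:MMYlemO}.

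The heart of the argument is the \emph{same-branch case} $m\le n-\delta$, in which $x,x'$ lie in one branch $J$ of $A_o^m$. There $\beta^o_{m-1}=|q^o_{m-1}\,\cdot\,-p^o_{m-1}|$ is affine on $J$ with slope $q^o_{m-1}$, and \eqref{eq:l<n-del3} gives $|A_o^m(x)-A_o^m(x')|=|x-x'|/(\beta^o_{m-1}(x)\beta^o_{m-1}(x'))$. I would split
$$T_{o,\nu}^m f(x)-T_{o,\nu}^m f(x')=\big((\beta^o_{m-1}(x))^\nu-(\beta^o_{m-1}(x'))^\nu\big)f(A_o^m(x))+(\beta^o_{m-1}(x'))^\nu\big(f(A_o^m(x))-f(A_o^m(x'))\big)$$
and estimate each term using $|a^\nu-b^\nu|\lesssim_\nu\max(a,b)^{\nu-1}|a-b|$, the comparability $\beta^o_{m-1}(x)\asymp\beta^o_{m-1}(x')$ from Lemma~\ref{le:MMYlemO}\eqref{it:lemO1}, the relation $\beta^o_{m-1}q^o_m\asymp1$ from \eqref{eq:beq}, the inequality $q^o_{m-1}\le q^o_m/g$ from \eqref{eq:q.O2}, and $|x-x'|\le|J|\asymp(q^o_m)^{-2}$ together with the distortion bound of Lemma~\ref{le:MMYlemO}\eqref{it:lemO3}. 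I expect this to yield
$$|T_{o,\nu}^m f(x)-T_{o,\nu}^m f(x')|\lesssim_{\nu,\eta}(\beta^o_{m-1}(x))^{\nu-2\eta}\,\|f\|_{\cC^\eta}\,|x-x'|^\eta,$$
which for $\nu\ge2\eta$ becomes $\lesssim g^{(m-1)(\nu-2\eta)}\|f\|_{\cC^\eta}|x-x'|^\eta$ by \eqref{eq:beO} (an exponentially decaying bound, using also $q^o_m\ge(1-g)g^{-(m-1)}$ from Remark~\ref{rk:sum.1/q}), and for $\nu<2\eta$ becomes $\lesssim\|f\|_{\cC^\eta}|x-x'|^{\nu/2}$, since $\beta^o_{m-1}(x)\gtrsim|x-x'|^{1/2}$ on $J$.

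The remaining two cases feed back into this one. If $n-\delta<m\le n$, then $\delta\in\{1,2\}$ and one of the configurations \ref{it:B}, \ref{it:C}, \ref{it:D} of Remark~\ref{rk:sp.n.O} applies, each furnishing a point $x''$ between $x$ and $x'$ (a convergent or a mediant) that lies in the common closed branch of $A_o^m$ of each of $x$ and $x'$, and at which $T_{o,\nu}^m f$ is continuous — the two one-sided limits of $A_o^m$ at $x''$ coincide (equal $0$ or $1$), because $A_o$ is continuous across the partition points $1/j$. Applying the same-branch estimate to the pairs $(x,x'')$ and $(x',x'')$ and using $|x-x''|,|x'-x''|\le|x-x'|$ gives the same bound. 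If instead $m\ge n+1$, then $m-1\ge n$ and Lemma~\ref{le:MMYlemO}\eqref{it:lemO2} yields $\beta^o_{m-1}(x),\beta^o_{m-1}(x')\le c_4|x-x'|^{1/2}$, whence $|T_{o,\nu}^m f(x)|,|T_{o,\nu}^m f(x')|\le c_4^\nu\|f\|_{\cC^0}|x-x'|^{\nu/2}$; for $\nu\ge2\eta$ I would interpolate this with $\beta^o_{m-1}\le g^{m-1}$ to recover $\le c_4^{2\eta}\|f\|_{\cC^0}g^{(m-1)(\nu-2\eta)}|x-x'|^\eta$.

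Putting $\overline\eta=\min\{\eta,\nu/2\}$, the three cases together prove part (1). When $\nu>2\eta$ (so $\overline\eta=\eta$) they give $\|T_{o,\nu}^m f\|_{\cC^\eta}\lesssim_{\nu,\eta}g^{(m-1)(\nu-2\eta)}\|f\|_{\cC^\eta}$; hence $T_{o,\nu}$ is bounded on $\cC^\eta_{[0,1]}$ with $\rho(T_{o,\nu})=\lim_m\|T_{o,\nu}^m\|_{\cC^\eta}^{1/m}\le g^{\nu-2\eta}<1$, so $\mathbf{B}_{o,\nu}=(1-T_{o,\nu})^{-1}=\sum_{m\ge0}T_{o,\nu}^m$ converges in operator norm — part (2). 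When $\nu=2\eta$ the factor $g^{(m-1)(\nu-2\eta)}$ equals $1$, so $\|T_{o,\nu}^m\|_{\cC^{\nu/2}}\le c_{6,\nu}$ uniformly in $m$ — part (3). The delicate points will be the exponent bookkeeping in the same-branch estimate — obtaining precisely the power $\nu-2\eta$ on $\beta^o_{m-1}$, which is exactly what produces the sharp spectral radius $g^{\nu-2\eta}$ — and verifying rigorously that $T_{o,\nu}^m f$ is continuous at the intermediate points $x''$ and that $x''$ indeed lies in the relevant closed branch; once Lemma~\ref{le:MMYlemO} is in hand, the remaining computations should be routine.
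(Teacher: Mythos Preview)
Your proposal is correct and follows essentially the same approach as the paper's proof in Appendix~\ref{ap:B}: the identical three-case split according to whether $m\le n-\delta$, $n-\delta<m\le n$, or $m>n$, handled respectively via the distortion bound of Lemma~\ref{le:MMYlemO}\eqref{it:lemO3}, the intermediate point $x''$ of Remark~\ref{rk:sp.n.O}, and Lemma~\ref{le:MMYlemO}\eqref{it:lemO2}. Your interpolation $\beta^o_{m-1}{}^{\nu}=\beta^o_{m-1}{}^{2\eta}\cdot\beta^o_{m-1}{}^{\nu-2\eta}$ in the $m>n$ case (for $\nu\ge2\eta$) is in fact a slightly cleaner variant of the paper's argument---which instead factorizes $\beta^o_{m-1}=\beta^o_n\cdot(\text{tail})$ and then bounds $|x-x'|\le g^{2(n-2)}$---but both routes yield the same spectral radius bound $g^{\nu-2\eta}$.
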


\begin{thm}\label{th:op2}
Let $f\in \mathcal{C}^\eta_{[0,1]}$ for $0<\eta\le 1$.
\begin{enumerate}
\item \label{it:th:op2.1} If $\nu < 2\eta$, then the function ${\bf B_{odd,\nu}} f = \sum_{m\ge 0} T_{odd,\nu}^m f$ is in $\mathcal{C}^{\nu/2}_{[0,1]}$.
\item If $\nu = 2\eta$, then the function ${\bf B_{odd,\nu}} f$ is in $\mathcal{C}^{\eta'}_{ {[0,1]}}$ for $0 {<} \eta' < \nu/2$.
\end{enumerate}
\end{thm}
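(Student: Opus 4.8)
The plan is to prove Theorem~\ref{th:op2} by decomposing the iterates $\TO{\nu}^m f$ into the ``interior'' contribution, where the two points $x,x'$ live in a common branch of $A_o^m$, and the ``boundary'' contribution near the endpoints of branches, exactly along the lines of the analogous argument in \cite{MMY97} for the NICF. For a single iterate $\TO{\nu}^m f(x) = (\beta^o_{m-1}(x))^\nu f(A_o^m(x))$, the H\"older modulus on a branch $J$ of $A_o^m$ is controlled by two terms: the variation of $f$ composed with $A_o^m$, which by Lemma~\ref{le:MMYlemO}\eqref{it:lemO2} (and the chain rule estimate \eqref{it:lemO3}) contributes a factor $\beta^o_{m-1}(x)^\nu \cdot |J|^{-\eta} \lesssim \beta^o_{m-1}(x)^{\nu - 2\eta} \le g^{(m-1)(\nu-2\eta)}$ times $|x-x'|^\eta$; and the variation of $(\beta^o_{m-1})^\nu$ itself, which is controlled since $\beta^o_{m-1}$ is a product of analytic branch inverses with bounded distortion. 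When $\nu < 2\eta$ this gives a geometric factor $g^{(m-1)(\nu - 2\eta)}$ — wait, that exponent is negative, so I actually have to be more careful: I want to \emph{gain} $|x-x'|^{\nu/2}$ rather than $|x-x'|^\eta$, using \eqref{it:lemO2} which says $\beta^o_m \lesssim |x-x'|^{1/2}$ once $m \ge n(x,x')$. So the correct bookkeeping is: bound $\sum_{m\ge 0}\TO{\nu}^m f$ by splitting the sum at $m = n(x,x')$.

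More precisely, I would carry out the following steps. First, for $m \le n(x,x')$, both points lie in a common branch of $A_o^m$ (up to the splitting-number correction $\delta\le 2$ handled by Remark~\ref{rk:sp.n.O}), so $T_{o,\nu}^m f$ is Lipschitz-type on that branch and, using \eqref{it:lemO1}, $\eqref{eq:l<n-del2}$, $\eqref{eq:l<n-del3}$ and \eqref{it:lemO3}, one estimates
$$
|\TO{\nu}^m f(x) - \TO{\nu}^m f(x')| \lesssim \big(\beta^o_{m-1}(x)\big)^{\nu}\,\frac{|x-x'|}{|x - x'| + \text{(branch length)}}\; + \;\big(\text{distortion of }\beta^o_{m-1}\big),
$$
which after summing a geometric-type series in $m$ (using $\beta^o_{m-1}\le g^{m-1}$ from \eqref{eq:beO}, and $\beta^o_{n(x,x')}\lesssim |x-x'|^{1/2}$ from \eqref{it:lemO2}) yields a bound $\lesssim |x-x'|^{\nu/2}$ provided $\nu < 2\eta$. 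Second, for $m > n(x,x')$, I use $\max\{\beta^o_{m-1}(x),\beta^o_{m-1}(x')\}\le c_4 |x-x'|^{1/2}$, so each term is $\le \|f\|_{\cC^0}\,c_4^\nu |x-x'|^{\nu/2}$ and there is still a geometric decay $g^{(m-n)\nu}$ coming from $\beta^o_{m-1}/\beta^o_{n}$; summing gives $\lesssim |x-x'|^{\nu/2}$. Third, I handle the exceptional configurations \ref{it:B}, \ref{it:C}, \ref{it:D} of Remark~\ref{rk:sp.n.O} using the intermediate point $x''$ (a rational $p_n/q_n$ or $\frac{p_n+p_n'}{q_n+q_n'}$): write $|f(x)-f(x')|\le |f(x)-f(x'')| + |f(x'')-f(x')|$, estimate each half against $|x-x''|^{\eta}$ resp. $|x'-x''|^{\eta}$ via \eqref{eq:B}, \eqref{eq:C}, \eqref{eq:D:x-x''}, and absorb the $\beta^o$ factors as before; the key point is that $|x-x''|\lesssim q_n^{-1}\beta^o_n(x)$ is comparable to $\beta^o_{n-1}(x)^2$ up to constants, so the $\eta$-H\"older bound on $f$ converts into the desired $\nu/2$ power. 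Finally, combining the interior and boundary sums over all $m$ gives $\|{\bf B_{o,\nu}}f\|_{\cC^{\nu/2}}\lesssim \|f\|_{\cC^\eta}$, proving part~\eqref{it:th:op2.1}.

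For part (ii), the borderline case $\nu = 2\eta$: here Theorem~\ref{th:op1}\eqref{it:th:op1.3} only gives uniform-in-$m$ bounds $\|\TO{\nu}^m\|_{\cC^{\nu/2}}\le c_{6,\nu}$ rather than a summable geometric series, so $\sum_m \TO{\nu}^m f$ need not converge in $\cC^{\nu/2}$. The standard fix, which I would follow, is an interpolation-type argument: fix $\eta' < \nu/2$, and for each pair $x,x'$ choose a threshold $M = M(|x-x'|)\sim \log(1/|x-x'|)$; sum the first $M$ terms using the uniform $\cC^{\nu/2}$ bound (giving $\lesssim M |x-x'|^{\nu/2} \lesssim |x-x'|^{\eta'}$ since the logarithmic loss is absorbed by the strictly smaller exponent) and the tail $m>M$ using $\beta^o_{m-1}\le g^{m-1}$ together with $\|f\|_{\cC^0}$ (giving a geometric tail $\lesssim g^{M\nu}\lesssim |x-x'|^{\text{large}}$). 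Balancing the two yields ${\bf B_{o,\nu}}f\in\cC^{\eta'}_{[0,1]}$ for every $\eta' < \nu/2$.

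The main obstacle I expect is the careful treatment of the branch-boundary terms (cases~\ref{it:B}--\ref{it:D} of Remark~\ref{rk:sp.n.O}), where $A_o^m$ is only piecewise monotone and the relevant inverse branches on the two sides of an endpoint have comparable but not equal lengths — one must use \eqref{it:lemO4} and the explicit convergent relations in Remark~\ref{rk:sp.n.O} to show that the ``jump'' of $(\beta^o_{m-1})^\nu$ across such a point is itself $O(|x-x'|^{\nu/2})$, rather than merely $O(|x-x'|^{\nu/2}\log(1/|x-x'|))$, so that no logarithm is lost in the strict inequality case $\nu<2\eta$. Apart from that, the argument is a bookkeeping exercise assembling Lemma~\ref{le:MMYlemO} and Theorem~\ref{th:op1}, closely parallel to \cite{MMY97}.
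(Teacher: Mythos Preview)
Your proposal is correct and follows essentially the same approach as the paper's proof. The paper splits the sum at $n=n(x,x')$, uses the pointwise estimates \eqref{eq:T^m(a)}, \eqref{eq:T^m(b)}, \eqref{eq:T^m(c)} already established in the proof of Theorem~\ref{th:op1} for the three regimes $m>n$, $m\le n-\delta$, and $n-\delta<m\le n$ (the last via the intermediate point $x''$, exactly as you describe), and then controls the interior sum $|x-x'|^{\eta-\nu/2}\sum_{m\le n}\beta^o_{m-1}(x)^{\nu-2\eta}$ by factoring out $\beta^o_{n-1}(x)^{\nu-2\eta}$ and using $|x-x'|\lesssim |J|\lesssim q_n^{-2}\asymp\beta^o_{n-1}(x)^2$ together with the geometric sum $\sum_{m\le n}(x_m\cdots x_{n-1})^{2\eta-\nu}\le\sum_k g^{k(2\eta-\nu)}$; for part~(ii) the same computation gives the factor $n(x,x')\lesssim\log(1/|x-x'|)$, which is then absorbed by $|x-x'|^{\nu/2-\eta'}$. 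Your anticipated ``main obstacle'' at the branch boundaries is in fact benign: case~(c) reduces to two applications of case~(b) via $x''$, so no logarithm appears there.
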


To prove the above theorem, we need to estimate f $|T_{odd,\nu}^mf(x) - T_{odd,\nu}^m f(x')|$ in terms of $|x-x'|^{\nu/2}$. For this aim, we state the following auxiliary properties. 

\begin{lem}\label{le:MMYlemO}
Let $x,~x'\in [0,1]$.
\begin{enumerate}
\item \label{it:lemO1} There  {exists} $\kappa_1 >1$ such that for all $\ell<n(x,x')$, 
\begin{equation}\label{eq:it:lemO1}
\kappa_1^{-1}\beta_{o,\ell}(x')<\beta_{o,\ell}(x)<\kappa_1 \beta_{o,\ell}(x').
\end{equation}

\item\label{it:lemO2} There  {exists} $\kappa_2> 1$ such that for all $\ell\ge n(x,x')$, 
\begin{equation}\label{eq:it:lemO2}
\max\{\beta_{o,\ell}(x),\beta_{o,\ell}(x')\}\le \kappa_2|x-x'|^{1/2}.
\end{equation}

\item \label{it:lemO3} Let $J$ be a branch of $A_{odd}^m$ which has an end point on $p_{o,m}/q_{o,m}$.
Then we have
$$\frac{1-g}{q_{o,m}^2}\le |J|\le \frac{g^{-1}}{q_{o,m}^2}, \quad \text{and}\quad  g^2 q_{o,m}^2 \le\left|\frac{dA_{odd}^m(x)}{dx}\right|\le \frac{q_{o,m}^2}{(1-g)^2}~\text{ for }x\in J.$$

\item \label{it:lemO4} Let $J,~J'$ be adjacent branches of $A_{odd}^m$.
Then there exists $\kappa_3 >1$ such that $$\kappa_3^{-1}|J'|\le |J| \le \kappa_3|J'|,$$
where $|J|$ is the length of $J$.
\end{enumerate}
\end{lem}

Since the proof are analogous to \cite[Lemma 1.10-13, Theorem 4.2, 4.4]{MMY97}, we give a full proof of the above theorems and lemma in Appendix~\ref{ap:B}.

\begin{prop}\label{pr:BO+-}
Let 
$$B_{odd,\nu}^+(x) = \frac{B_{odd,\nu}(x)+B_{odd,\nu}(1-x)}{2}\text{ and }B_{odd,\nu}^-(x) = \frac{B_{odd,\nu}(x)-B_{odd,\nu}(1-x)}{2}.$$
Then, $B_{odd,\nu}^+(x)$ is an even $\mathbb{Z}$-periodic function such that
\begin{equation*}\label{eq:BO+} 
B^+_{odd,\nu}(x) 
=  \log\frac{1}{x}- \frac{1-(1-x)^\nu}{2} \log \frac {1-x}{x} +x^\nu B_{odd,\nu}^+\left(\frac{1}{x}\right) \text{ for }x\in(0,1/2),
\end{equation*}
and $B_{odd,\nu}^-$ is an even $2\mathbb{Z}$-periodic function such that $B_{odd,\nu}^-(1+x)= - B_{odd,\nu}^-(x)$ and
\begin{equation}\label{eq:BO-} 
B^{-}_{odd,\nu}(x) 
=  \frac{1-(1-x)^\nu}{2}\log\frac {1-x}{x} + x^\nu B_{odd,\nu}^-\bigg(1-\frac{1}{x}\bigg) \text{ for }x\in(0,1/2).
\end{equation}
\end{prop}

\begin{proof}
By using \eqref{eq:fun.o}, for $x\in (0,1/2)$, we have
\begin{equation*}\begin{split}
B_{odd,\nu}(1-x) 
& = \log \frac{1}{1-x} + (1-x)^\nu B_{odd,\nu}\bigg(\frac{1}{1-x}-1\bigg) \\
& = \log \frac{1}{1-x} + (1-x)^\nu \bigg[\log \frac{1-x}{x} + \frac{x^\nu}{(1-x)^\nu} B_{odd,\nu} \bigg(\frac{1}{x}\bigg)\bigg] \\
& = \log \frac{1}{1-x} + (1-x)^\nu \log \frac{1-x}{x} + x^\nu B_{odd,\nu} \bigg(\frac{1}{x}\bigg).
\end{split}\end{equation*}
Combining \eqref{eq:fun.o} with the above equation, we have the conclusion.
\end{proof}

From Theorem~\ref{MMY4.6}, to show the H\"older continuity of $B_{1,\nu}-B_{odd,\nu}$, we will show that $B_{odd,\nu}-B_{1/2,\nu}$ is H\"older continuous.
We split $B_{odd,\nu}-B_{1/2,\nu}$ by an even $\mathbb{Z}$-periodic function $B_{odd,\nu}^+-B_{1/2,\nu}$ and an even $2\mathbb{Z}$-periodic function $B_{odd,\nu}^-$.
Thus it is enough to show that $B_{odd,\nu}^+-B_{1/2,\nu}\in \mathcal{C}^{\nu/2}_{[0,1/2]}$ and $B_{odd,\nu}^-\in\mathcal{C}^{\nu/2}_{[0,1]}$.

\begin{proof}[Proof of Theorem~\ref{main2}]
Let $\Delta := B^+_{odd,\nu} - B_{1/2,\nu}$.  {For $x\in(0,1/2]$,} we have
$$ 
\Delta(x)  =  -\frac{1 - (1-x)^\nu}{2} \log  {\frac{1-x}{x}} + x^\nu \left[B_{odd,\nu}^+\left(1-\frac{1}{x}\right)-B_{1/2,\nu}\left(1-\frac1x\right)\right].$$ 
Thus, $(1-T_{1/2,\nu})\Delta(x) =   - \frac{1 - (1-x)^\nu}{2} \log  {\frac{1-x}{x}}$,  {which} is in $\mathcal{C}^\eta_{[0,1/2]}$ for $\nu/2<\eta<\nu$. By Proposition~\ref{MMY4.4}, $\Delta$ is in $\mathcal{C}_{[0,1/2]}^{\nu/2}$.
Equation~\eqref{eq:BO-} is equivalent to
$$(1-T_{odd,\nu})B_{odd,\nu}^-(x) = \frac{1-(1-x)^\nu}{2}\log  {\frac{1-x}{x}} \in \mathcal{C}^{\eta}_{[0, 1]}\text{ for }\nu/2<\eta<\nu.$$
By Theorem~\ref{th:op2}-\eqref{it:th:op2.1}, we have $B_{odd,\nu}^- \in \mathcal{C}_{[0, 1]}^{\nu/2}$.
\end{proof}

\section{The ECF- and OOCF-Brjuno functions}\label{sec:EF}
In this section, we recall some basic properties of the ECF and the OOCF and define a Brjuno-like function associated with the OOCF.
We will prove Theorem~\ref{main3} which tells us that the corresponding Brjuno numbers can be characterized by a combination of the Brjuno functions associated  {with} the ECF and  {with} the OOCF.

\subsection{The even continued fraction}
\begin{figure} 
\begin{subfigure}{0.49\textwidth}
\centering
\begin{tikzpicture}[scale=5]
\draw (1.4,0) -- (1.4+1,0);
\draw (1.4+0,0) -- (1.4+0,1);
\draw[domain=1.4+1/2:1.4+1,black] plot (\x,{-(1/(\x-1.4)-2)});
\draw[domain=1.4+1/3:1.4+1/2,black] plot (\x,{(1/(\x-1.4)-2)});
\draw[domain=1.4+1/4:1.4+1/3,black] plot (\x,{-(1/(\x-1.4)-4)});
\draw[domain=1.4+1/5:1.4+1/4,black] plot (\x,{(1/(\x-1.4)-4)});
\draw[domain=1.4+1/6:1.4+1/5,black] plot (\x,{-(1/(\x-1.4)-6)});
\draw[domain=1.4+1/7:1.4+1/6,black] plot (\x,{(1/(\x-1.4)-6)});
\draw[domain=1.4+1/8:1.4+1/7,black] plot (\x,{-(1/(\x-1.4)-8)});
\draw[domain=1.4+1/9:1.4+1/8,black] plot (\x,{(1/(\x-1.4)-8)});
\draw[domain=1.4+1/10:1.4+1/9,black] plot (\x,{-(1/(\x-1.4)-10)});
\draw[domain=1.4+1/11:1.4+1/10,black] plot (\x,{(1/(\x-1.4)-10)});
\draw[domain=1.4+1/12:1.4+1/11,black] plot (\x,{-(1/(\x-1.4)-12)});
\draw[domain=1.4+1/13:1.4+1/12,black] plot (\x,{(1/(\x-1.4)-12)});
\draw[domain=1.4+1/14:1.4+1/13,black] plot (\x,{-(1/(\x-1.4)-14)});
\draw[domain=1.4+1/15:1.4+1/14,black] plot (\x,{(1/(\x-1.4)-14)});

\draw[domain=1.4+1/16:1.4+1/15,black] plot (\x,{-(1/(\x-1.4)-16)});
\draw[domain=1.4+1/17:1.4+1/16,black] plot (\x,{(1/(\x-1.4)-16)});
\draw[domain=1.4+1/18:1.4+1/17,black] plot (\x,{-(1/(\x-1.4)-18)});
\draw[domain=1.4+1/19:1.4+1/18,black] plot (\x,{(1/(\x-1.4)-18)});
\draw[domain=1.4+1/20:1.4+1/19,black] plot (\x,{-(1/(\x-1.4)-20)});
\draw[domain=1.4+1/21:1.4+1/20,black] plot (\x,{(1/(\x-1.4)-20)});

\draw[dotted, thin] (1.4+1/2,0) -- (1.4+1/2,1);

\node at (1.4-0.05,1){\tiny$1$};
\node at (1.4+1,-0.08){\tiny$1$};
\node at (1.4-0.05,-0.08){\tiny$0$};
\node at (1.4+1/2,-0.1) {\small$\frac12$};

\draw[dotted, thin] (1.4+1/3,0) -- (1.4+1/3,1);

\node at (1.4+1/3,-0.1) {\small$\frac13$};
\node at (1.4+1/4,-0.1) {\small$\frac14$};
\node at (1.4+1/5,-0.1) {\small$\frac15$};

\draw[dotted, thin](1.4+1/4,0)--(1.4+1/4,1);
\draw[dotted, thin](1.4+1/5,0)--(1.4+1/5,1);

\end{tikzpicture}
\caption{The graph of $\AE$.}\label{fig:AE}
\end{subfigure}
\begin{subfigure}{0.49\textwidth}
\centering
\begin{tikzpicture}[scale = 5]
\draw (1.4,0) -- (1.4+1,0);
\draw (1.4+0,0) -- (1.4+0,1);
\draw[domain=1.4+0:1.4+1/3, black] plot (\x, {(\x-1.4)/(1-2*(\x-1.4))});
\draw[domain=1.4+1/3:1.4+1/2, black] plot (\x,{1/(\x-1.4)-2});
\draw[domain=1.4+1/2:1.4+3/5, black] plot (\x, {(2*(\x-1.4)-1)/(2-3*(\x-1.4))});
\draw[domain=1.4+3/5:1.4+2/3, black] plot (\x, {(2-3*(\x-1.4))/(2*(\x-1.4)-1)});
\draw[domain=1.4+2/3:1.4+5/7, black] plot (\x, {(3*(\x-1.4)-2)/(3-4*(\x-1.4))});
\draw[domain=1.4+5/7:1.4+3/4, black] plot (\x, {(3-4*(\x-1.4))/(3*(\x-1.4)-2)});
\draw[domain=1.4+3/4:1.4+7/9, black] plot (\x, {(4*(\x-1.4)-3)/(4-5*(\x-1.4))});
\draw[domain=1.4+7/9:1.4+4/5, black] plot (\x, {(4-5*(\x-1.4))/(4*(\x-1.4)-3)});
\draw[domain=1.4+4/5:1.4+9/11, black] plot (\x, {(5*(\x-1.4)-4)/(5-6*(\x-1.4))});
\draw[domain=1.4+9/11:1.4+5/6, black] plot (\x, {(5-6*(\x-1.4))/(5*(\x-1.4)-4)});
\draw[domain=1.4+5/6:1.4+11/13, black] plot (\x, {(6*(\x-1.4)-5)/(6-7*(\x-1.4))});
\draw[domain=1.4+11/13:1.4+6/7, black] plot (\x, {(6-7*(\x-1.4))/(6*(\x-1.4)-5)});
\draw[domain=1.4+6/7:1.4+13/15, black] plot (\x, {(7*(\x-1.4)-6)/(7-8*(\x-1.4))});
\draw[domain=1.4+13/15:1.4+7/8, black] plot (\x, {(7-8*(\x-1.4))/(7*(\x-1.4)-6)});
\draw[domain=1.4+7/8:1.4+15/17, black] plot (\x, {(8*(\x-1.4)-7)/(8-9*(\x-1.4))});
\draw[domain=1.4+15/17:1.4+8/9, black] plot (\x, {(8-9*(\x-1.4))/(8*(\x-1.4)-7)});

\draw[dotted, thin] (1.4+1,0)--(1.4+1,1);
\draw[dotted, thin] (1.4+1/2,0) -- (1.4+1/2,1);

\node at (1.4-0.05,1){\tiny$1$};
\node at (1.4+1,-0.08){\tiny$1$};
\node at (1.4-0.05,-0.08){\tiny$0$};
\node at (1.4+1/2,-0.1) {\small$\frac12$};

\draw[dotted, thin] (1.4+1/3,0) -- (1.4+1/3,1);
\draw[dotted, thin] (1.4+3/5,0) -- (1.4+3/5,1);
\draw[dotted, thin] (1.4+2/3,0) -- (1.4+2/3,1);
\node at (1.4+1/3,-0.1) {\small$\frac13$};
\node at (1.4+3/5,-0.1) {\small$\frac35$};
\node at (1.4+2/3,-0.1) {\small$\frac23$};
\node at (1.4+3/4,-0.1) {\small$\frac34$};

\end{tikzpicture}
\caption{The graphs of $A_{oo}$.}\label{fig:AOO}
\end{subfigure}
\end{figure}
The  {ECF} map $A_{even}$  {as in \eqref{eq:AOE}} is ergodic with respect to
$dm_{even}(x) := \frac{dx}{1-x^2},$
which is a $\sigma$-finite measure with infinite total mass \cite[Thm 2]{Sch82}.
We denote by $\lfloor t\rfloor_{even}:= 2\lfloor (t+1)/2\rfloor$ a nearest even integer of $t$.
For $x\in \mathbb{R}$, we denote the \emph{ECF partial quotients} $(a_{e,n},\veps_{e,n})$ by
\begin{equation}\label{eq:digitE}
a_{e,0}:=\lfloor x \rfloor_{even},~ \veps_{e,0}:=\mathrm{sign}(x-a_{e,0}),~
a_{e,n} := \left\lfloor \frac{1}{x_{e,n-1}}\right\rfloor_{even} \text{ and }\veps_{e,n} := \text{sign}\bigg(\dfrac{1}{x_{e,n-1}} - a_{e,n}\bigg),
\end{equation}
where $x_{e,n}$ is an $A_{even}$-orbit of $x$ defined as in \eqref{eq:set.x_n}.
The \emph{ECF expansion} of $x$ is 
\begin{equation*}
a_{e,0}+ \dfrac{\veps_{e,0}}{a_{e,1}+\dfrac{\veps_{e,1}} {\ddots+\dfrac{\ddots}{a_{e,n}+ \dfrac{\veps_{e,n}}{a_{e,n+1}+\ddots}}}}.
\end{equation*}
The \emph{$n$th principal convergent of the ECF}  {is} the truncated continued fraction expansion denoted by
$$
\frac{p_{e,n}}{q_{e,n}} := a_{e,0} + \dfrac{\veps_{e,0}}{a_{e,1}+\dfrac{\veps_{e,1}}{a_{e,2}+\ddots+\dfrac{\veps_{e,n-1}}{a_{e,n}}}}.
$$
Then, $p_{e,n}/q_{e,n}$ for all $n\ge 0$ hold the same properties in Remark~\ref{rk:p/qO}.
Note that $\{q_{e,n}\}$ is an increasing sequence.

\subsection{The odd-odd continued fraction}
Kim, Liao and the first author defined the following interval map 
$$
A_{oo} (x) = 
\begin{cases}
\frac{k x - (k-1) }{k - (k+1) x}, &  x \in\left[\frac{k-1}{k},\frac{2k -1}{2k+1}\right], \vspace{0.2cm} \\
\frac{k - (k+1) x}{k x - (k-1) }, &   x \in \left[\frac{2k -1}{2k+1},\frac{k}{k+1}\right],
\end{cases}
\quad \text{ for } k\ge1,
$$
and investigated the corresponding continued fraction, which is called \emph{the odd-odd continued fraction (OOCF)} \cite{KLL22}.
See Figure~\subref{fig:AOO} for the graph of $A_{oo}$.
The  {OOCF expansion} has the form
$$
1 - \dfrac{1}{a_{oo,1} + \dfrac{\veps_{oo,1}}{ 2 - \dfrac{1}{a_{oo,2} + \dfrac{\veps_{oo,2}}{2-\dfrac{1}{ \ddots -\dfrac{1}{a_{oo,n}+\dfrac{\veps_{oo,n}}{2-\ddots}}}}}}},
$$
where $a_{oo,n} \in \mathbb N$ and $\veps_{oo,n} \in \{1, -1\}$ for $a_{oo,n} \ge 2$ and $\veps_{oo,n} = 1$ for $a_{oo,n} = 1$.
For $n\ge 1$, the \emph{$n$th principal convergent} of the OOCF is defined by
$$
\frac{p_{oo,n}}{q_{oo,n}}:=
1 -\cfrac{1}{a_{oo,1} + \dfrac{\veps_{oo,1}}{ 2 - \dfrac{1}{\ddots \dfrac{\veps_{oo,n-1}}{ 2 - \dfrac{1}{a_{oo,n} + \dfrac{\veps_{oo,n}}{2}}}}}}.
$$
 {Let $p_{oo,-1}=-1$, $q_{oo,-1}=1$, $p_{oo,0} = 1$ and $q_{oo,0}=1$.}
We have the following recursive relations:
\begin{equation*}
\begin{cases}
p_{oo,n}=(2a_{oo,n}+\veps_{oo,n} {-1})p_{oo,n-1}+\veps_{oo,n-1}p_{oo,n-2}, \\
q_{oo,n}=(2a_{oo,n}+\veps_{oo,n} {-1})q_{oo,n-1}+\veps_{oo,n-1}q_{oo,n-2},
\end{cases}
\quad \text{ for }n\ge   1.
\end{equation*}
We  {denote} by $x_{oo,n}=A_{oo}^{n}(x)$.
We  {define a function $\iota:[0,1]\to[0,1]$} by $\iota(x) = \frac{1-x}{1+x}$.
Note that $\iota^2=\text{id}$ and the dynamical systems $(I, A_{even}, m_{even})$ and $(I,A_{oo},m_{oo})$ are conjugate to each other via $\iota$.
We have
\begin{equation}\label{eq:x_nOO}
x = \frac{p_{oo,n}+p_{oo,n-1}\veps_{oo,n}\iota(x_{oo,n})}{q_{oo,n}+q_{oo,n-1}\veps_{oo,n}\iota(x_{oo,n})},\quad \text{and} \quad \iota(x_{oo,n})=-\veps_{oo,n}\frac{q_{oo,n}x-p_{oo,n}}{q_{oo,n-1}x-p_{oo,n-1}}.
\end{equation}
By letting
$$\beta_{oo,n}(x) := (x_{oo,0}+1)\prod_{i=0}^n \iota(x_{oo,i}),~~\text{ and }~~ \beta_{oo,-1}(x):=1,$$
we define the \emph{odd-odd-Brjuno function} with a positive exponent $\nu$ by 
\begin{equation}\label{eq:BOO}
B_{oo,\nu}(x) = - \sum_{n=0}^\infty (\beta_{oo,n-1}(x_{oo,n}))^\nu\log{\iota(x_{oo,n})}.
\end{equation}
We note that $B_{oo,\nu}(x) = (\{x\}+1)^\nu B_{even,\nu}(\iota(\{x\})).$ 
See Figure~\ref{fig:EOO} for the graph of $B_{oo, 1}$.

From \eqref{eq:x_nOO} and the fact that $|p_{oo,n+1}q_{oo,n} - p_{oo,n}q_{oo,n+1}| =2$, we have
\begin{equation}\label{eq:beOO}
\beta_{oo,n}(x) = |q_{oo,n}x-p_{oo,n}| =  \frac{2}{q_{oo,n+1} + \varepsilon_{oo,n+1}q_{oo,n}\iota(x_{oo,n+1})}.
\end{equation}

\subsection{Proof of Theorem~\ref{main3}}
We call  {a rational} $p/q$  {in lowest terms} an \emph{$\infty$-rational} if $p$ and $q$ have different parities, and a \emph{$1$-rational} if $p$ and $q$ are both odd.
The ECF principal convergents  {$p_{e,n}/q_{e,n}$} are the best-approximating  {rationals} among  {the} $\infty$-rationals \cite{SW14}.
On the other hand, $p_{oo,n}/q_{oo,n}$ are the best-approximating  {rationals} among  {the} $1$-rationals \cite{KLL22}.
To show Theorem~\ref{main3}, we separate the series $\sum_{n=0}^\infty (\log Q_{n+1})/{Q_n^\nu}$ by the sum over  {$n$ such that $P_n/Q_n$ is an $\infty$-rational, and the sum over $n$ such that $P_n/Q_n$ is a $1$-rational.}

\begin{prop}
There exists $C_\nu>0$ such that, for all $x\in \mathbb{R}\setminus\mathbb{Q}$, 
$$\left|\sum_{k=1}^\infty \frac{\log q_{e,k+1}}{(q_{e,k})^\nu}- \sum_{\frac{P_n}{Q_n}: \infty\text{-rational}}\frac{\log Q_{n+1}}{Q_n^\nu}\right|<C_\nu.$$
\end{prop}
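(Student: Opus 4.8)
The plan is to follow the proof of Proposition~\ref{pr:logq/qO}, with the ECF in the role of the OCF and the $\infty$-rationals in the role of the full set of RCF convergents. The essential input I would isolate first is a dictionary between the ECF principal convergents $p^e_k/q^e_k$ and the RCF convergents $P_n/Q_n$, i.e. the ECF analogue of Lemma~\ref{le:p/q.O}. Since by \cite{SW14} the $p^e_k/q^e_k$ are exactly the best-approximating rationals among the $\infty$-rationals, and since an RCF convergent is a best approximant among all rationals, one expects the dictionary to read: (a) up to finitely many small-denominator exceptions, every $\infty$-rational RCF convergent occurs as an ECF convergent; (b) every ECF convergent is either an RCF convergent or a first semiconvergent $(P_{n+1}+P_n)/(Q_{n+1}+Q_n)$ of two consecutive RCF convergents; (c) the ECF advances over the RCF by a bounded local modification (a singularization absorbing the occasional odd partial quotient), so that only $O(1)$ RCF convergents separate consecutive ECF convergents and conversely. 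A convenient preliminary observation here is the parity recursion modulo $2$: from $P_{n-1}Q_n-P_nQ_{n-1}=\pm1$, consecutive RCF convergents always lie in distinct parity classes, hence no two consecutive RCF convergents are both $1$-rationals, and therefore between two successive $\infty$-rational RCF convergents at most one convergent is skipped. Turning (a)--(c) into precise statements --- in effect Schweiger's description of the ECF as a singularization of the RCF, see \cite{Sch82,Sch84} --- is where the real work lies, and I expect it to be the main obstacle.

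Granting the dictionary, the inequality is a termwise comparison exactly as in the proof of Proposition~\ref{pr:logq/qO}. For each $\infty$-rational RCF convergent $P_n/Q_n$ there is, by (a), an ECF convergent $p^e_{k}/q^e_{k}=P_n/Q_n$, and its successor denominator $q^e_{k+1}$ is, by (b)--(c), one of $Q_{n+1}$, $Q_{n+1}+Q_n$, or --- when the intervening $1$-rational $P_{n+1}/Q_{n+1}$ is skipped --- a semiconvergent of comparable size; in all cases $\left|\log q^e_{k+1}-\log Q_{n+1}\right|$ is bounded by an absolute constant, so $\left|\log q^e_{k+1}/(q^e_k)^\nu-\log Q_{n+1}/Q_n^\nu\right|\lesssim Q_n^{-\nu}$. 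The remaining ECF convergents are the first semiconvergents $(P_{n+1}+P_n)/(Q_{n+1}+Q_n)$; there is at most $O(1)$ of these per RCF convergent, each with $Q_n\le q^e_k\le 2Q_{n+1}$, and they contribute to the left-hand sum a total bounded by a multiple of $\sum_n(\log Q_n+1)Q_n^{-\nu}$. Summing all the discrepancies over $n$ and invoking the RCF bounds $\sum_nQ_n^{-\nu}\le C_{1,\nu}$ and $\sum_nQ_n^{-\nu}\log Q_n\le C_{2,\nu}$ from \eqref{eq:sum.1/qR} in Remark~\ref{rk:sum.1/q} produces the claimed uniform constant $C_\nu$.

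Two points merit emphasis. First, the dictionary (a)--(c) is the crux: one must in particular track the steps with $\veps^e_k=-1$, which are precisely those producing the semiconvergent ECF convergents, and show that the modification relative to the RCF stays bounded --- this is the ECF counterpart of Remark~\ref{rk:sp.n.O} and Lemma~\ref{le:p/q.O}. Second, unlike the OCF the ECF map has a parabolic fixed point at $1$ and an infinite invariant measure, so there is no uniform bound on $\sum_n(q^e_n)^{-\nu}$; consequently every estimate above must be charged to the RCF denominators $Q_n$ rather than to the $q^e_n$, which is legitimate precisely because the dictionary attaches to each ECF convergent --- directly or through one semiconvergent --- an RCF convergent with comparable denominator. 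In particular both sums in the statement may diverge for a fixed $x$; the content of the proposition is that their difference is controlled termwise by a convergent series in the $Q_n$.
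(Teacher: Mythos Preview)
Your proposed dictionary (b)--(c) is the gap. The ECF is \emph{not} a singularization of the RCF; it runs in the opposite direction, as a slow algorithm that can insert arbitrarily many intermediate convergents. The paper invokes the precise description of Kraaikamp and Lopes \cite{KL96}: if $p^e_k/q^e_k$ is not an RCF convergent then $q^e_k=(i-1)Q_n+Q_{n-1}$ and $q^e_{k+1}=iQ_n+Q_{n-1}$ for some $2\le i\le a_{n+1}$. Thus when the ECF enters the parabolic regime (digits $(a^e,\veps^e)=(2,-1)$) it walks through \emph{every} semiconvergent between two consecutive RCF convergents, not just the first one. Concretely, for $x=[0;1,N,\ldots]$ the ECF produces each $i/(i+1)$, $1\le i\le N-1$, as a principal convergent, giving $N-1$ non-RCF ECF convergents attached to the single RCF block from $P_1/Q_1=1/1$ to $P_2/Q_2=N/(N+1)$. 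Your claims that the extra ECF convergents are ``first semiconvergents'' and that ``there is at most $O(1)$ of these per RCF convergent'' are therefore false, and the termwise accounting you build on them collapses.

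You do correctly flag that the parabolic fixed point makes $\sum_k(q^e_k)^{-\nu}$ uncontrolled, but you fail to feed this back into the dictionary: the very phenomenon you note is exactly what generates an unbounded number of ECF convergents per RCF step. The paper's route is to retain the index $i$ within each parabolic block and use $q^e_k\ge(i-1)Q_n$, so that the $i$-th extra term is bounded by $\bigl(\log Q_n+\log(i+1)\bigr)/\bigl((i-1)Q_n\bigr)^\nu$; it is this decay in $i$, entirely absent from your outline, that must replace the $O(1)$ claim when summing the discrepancy over a block.
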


\begin{proof}
Kraaikamp and Lopes investigated a relation between the ECF and the RCF convergents.
We recall the relation as follows.
If $p_{e,k}/q_{e,k} = P_n/Q_n$ for some $n$, then either
$q_{e,k+1} = Q_{n+1}$ or $q_{e,k+1} = Q_{n+1}+Q_n$.
If $p_{e,k}/q_{e,k}$ is not an RCF convergent, then
$q_{e,k} = (i-1)Q_n+Q_{n-1}$ and $q_{e,k+1}=iQ_n+Q_{n-1}$ for some $2\le i \le a_{n+1}$, 
see \cite[Remark 2 and Proof of Lemma 1]{KL96} for more details.

Thus, if $p_{e,k}/q_{e,k}$ is a RCF convergent, then 
$$ {0\le} \frac{\log q_{e,k+1}}{(q_{e,k})^\nu} - \frac{\log Q_{n+1}}{Q_n^\nu} \le  \frac{\log 2}{Q_n^\nu}.$$
If $p_{e,k}/q_{e,k}$ is not a RCF convergent, then 
$$\frac{\log q_{e,k+1}}{(q_{e,k})^\nu} = \frac{\log (iQ_n+Q_{n-1})}{((i-1)Q_n+Q_{n-1})^\nu}\le \frac{\log Q_n+\log  {s_\nu}}{Q_n^\nu}.$$
 {where $s_\nu := \sup_{i\ge 2} \frac{\log(i+1)}{(i-1)^\nu}<\infty.$}
Since every $\infty$-rational RCF convergent is an ECF convergent, we have
$$\left|\sum_{k=1}^\infty \frac{\log q_{e,k+1}}{(q_{e,k})^\nu}- \sum_{\frac{P_n}{Q_n}:\infty\text{-rational}}\frac{\log Q_{n+1}}{Q_n^\nu}\right|
\le \sum_{n=1}^\infty \frac{\log Q_n+\log  {s_\nu}}{Q_n^\nu} \le C_{1,\nu} \log  {s_\nu}  + C_{2,\nu},
$$
here $C_{1,\nu}$, $C_{2,\nu}$ are constants as in \eqref{eq:sum.1/qR}.
\end{proof}

\begin{rmk}\label{rk:OO}
Let $x = [a_0;a_1,a_2,\cdots,a_n,\cdots]$.
The \emph{intermediate convergent} $P_{n, i}/Q_{n,i}$ is defined by 
$$P_{n,i}=iP_{n-1}+P_{n-2}\quad\text{and}\quad Q_{n,i}=iQ_{n-1}+Q_{n-2},\quad \text{ for }0\le i \le a_n.$$

Every $p^{oo}_k/q^{oo}_k$ is a RCF principal convergent or an intermediate convergent.
If $P_{n}/Q_{n}$ is a $1$-rational, then $P_{n+1,i}/Q_{n+1,i}$ is not an OOCF principal convergent for any $1\le i < a_n$.
If $P_{n}/Q_{n}$ is an $\infty$-rational, then every $1$-rational $P_{n+1,i}/Q_{n+1,i}$ is an OOCF principal convergent, see \cite[Section 5]{KLL22}.
\end{rmk}

\begin{prop}
There exists $C_\nu>0$ such that, for all $x\in \mathbb{R}\setminus\mathbb{Q}$, one has
$$\left|\sum_{k=1}^\infty \frac{\log q_{oo,k+1}}{q_{oo,k}}- \sum_{\frac{P_n}{Q_n}:1\text{-rational}}\frac{\log Q_{n+1}}{Q_n}\right|<C_\nu.$$
\end{prop}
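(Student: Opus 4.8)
The plan is to mirror the proof of the preceding proposition, with the Kraaikamp--Lopes description of the ECF convergents replaced by the description of the OOCF convergents recorded in Remark~\ref{rk:OO}, and the class of $\infty$-rationals replaced by that of $1$-rationals (the series being understood to carry the exponent $\nu$, as in the preceding proposition). First I would set up the dictionary between OOCF and RCF convergents. By Remark~\ref{rk:OO} every OOCF principal convergent $\poo{k}/\qoo{k}$ is either an RCF principal convergent $P_n/Q_n$ --- necessarily a $1$-rational, and, conversely, every $1$-rational RCF principal convergent occurs, being a best $1$-rational approximation \cite{KLL} --- or an RCF intermediate convergent $P_{m,i}/Q_{m,i}=(iP_{m-1}+P_{m-2})/(iQ_{m-1}+Q_{m-2})$ with $P_{m-1}/Q_{m-1}$ an $\infty$-rational and $P_{m,i}/Q_{m,i}$ a $1$-rational. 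Since $P_{n-1}Q_n-P_nQ_{n-1}=\pm1$, a $1$-rational $P_n/Q_n$ forces both neighbours $P_{n\pm1}/Q_{n\pm1}$ to be $\infty$-rationals; hence the $1$-rational principal convergents occur at indices $n_0<n_1<\cdots$ with $n_{j+1}\ge n_j+2$, on each block $n_j<m\le n_{j+1}$ the interior convergents are $\infty$-rationals, and for fixed $m$ the admissible sub-indices $i$ form a single residue class modulo $2$ in $\{1,\dots,a_m\}$, the residue being dictated by the parities of $P_{m-1},Q_{m-1},P_{m-2},Q_{m-2}$.

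Next I would compare the two series term by term, exactly as in the preceding proposition. For an OOCF convergent equal to a $1$-rational principal convergent $\poo{k}/\qoo{k}=P_{n}/Q_{n}$, the successor $\qoo{k+1}$ is a denominator of the form $Q_{n+1}$, $Q_{n+1}+Q_{n}$, or an intermediate denominator $iQ_{n+1}+Q_{n}$ with bounded $i$; thus $\log\qoo{k+1}=\log Q_{n+1}+O(1)$ and $\bigl|\tfrac{\log\qoo{k+1}}{(\qoo{k})^{\nu}}-\tfrac{\log Q_{n+1}}{Q_{n}^{\nu}}\bigr|\le C\,Q_{n}^{-\nu}$, so summing over such $k$ and using $\sum_n Q_n^{-\nu}\le C_{1,\nu}$ from \eqref{eq:sum.1/qR} bounds this contribution. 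For the OOCF convergents that are intermediate convergents $P_{m,i}/Q_{m,i}$ I would bound $\qoo{k}=iQ_{m-1}+Q_{m-2}\ge iQ_{m-1}$ from below and the successor from above by an intermediate denominator at level $m$ or $m+1$; then, treating these exactly as the non-RCF convergents are treated in the preceding proposition --- using $s_\nu:=\sup_{i\ge2}\tfrac{\log(i+1)}{(i-1)^\nu}<\infty$ and $\log t\lesssim_\nu t^{\nu/2}$ --- the total ``surplus'' of $\sum_k\tfrac{\log\qoo{k+1}}{(\qoo{k})^{\nu}}$ over $\sum_{\frac{P_n}{Q_n}:1\text{-rational}}\tfrac{\log Q_{n+1}}{Q_n^{\nu}}$ is $\lesssim_\nu\sum_n Q_n^{-\nu}(1+\log Q_n)\le C_{1,\nu}+C_{2,\nu}$. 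Since every $1$-rational RCF principal convergent is an OOCF convergent, no term of the $1$-rational RCF series is missing, and combining the two estimates gives the claim with an explicit $C_\nu$.

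I expect the main obstacle to be the combinatorial bookkeeping in the first step: identifying, for each run of $\infty$-rational RCF convergents between two consecutive $1$-rational principal convergents, precisely which intermediate convergents $P_{m,i}/Q_{m,i}$ become OOCF convergents (the admissible residue of $i$ changes from one run to the next), and, relatedly, pinning down the successor of each OOCF convergent so that the ratios $\qoo{k+1}/\qoo{k}$ admit the bounds needed to telescope the surplus against $\sum_n Q_n^{-\nu}\log Q_n$. Once this dictionary is in place, the remaining estimates are routine and parallel those of Proposition~\ref{pr:logq/qO2} and of the preceding proposition.
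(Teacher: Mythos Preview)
Your plan is essentially the paper's own argument: split the OOCF convergents into those equal to a $1$-rational RCF principal convergent and those equal to an intermediate convergent (using Remark~\ref{rk:OO}), bound the discrepancy in each case, and sum via the constants $C_{1,\nu},C_{2,\nu}$ of \eqref{eq:sum.1/qR}.

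The combinatorial bookkeeping you flag as the main obstacle turns out to be simpler than you anticipate. A short parity check shows that if $P_n/Q_n$ is a $1$-rational then $P_{n+1}/Q_{n+1}$ is an $\infty$-rational, and among the intermediates $iQ_{n+1}+Q_n$ the first $1$-rational occurs exactly at $i=2$; hence the paper records the successor precisely as $\qoo{k+1}=2Q_{n+1}+Q_n$ (your list of possibilities ``$Q_{n+1}$, $Q_{n+1}+Q_n$, or $iQ_{n+1}+Q_n$ with bounded $i$'' is looser than needed --- the first two never occur). Likewise, when $\poo{k}/\qoo{k}$ is an intermediate $iQ_n+Q_{n-1}$, the $1$-rationals at that level sit in a single residue class mod~$2$, so $\qoo{k+1}=(i+2)Q_n+Q_{n-1}$ exactly, with $1\le i\le a_{n+1}-2$. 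With these exact identifications the estimates become
\[
\frac{\log\qoo{k+1}}{(\qoo{k})^\nu}-\frac{\log Q_{n+1}}{Q_n^\nu}\le\frac{\log 3}{Q_n^\nu}
\qquad\text{and}\qquad
\frac{\log\qoo{k+1}}{(\qoo{k})^\nu}\le\frac{\log Q_n+\log s_\nu}{Q_n^\nu},
\]
with $s_\nu=\sup_{i\ge1}\log(i+3)/i^\nu$, and the sum is bounded by $C_{1,\nu}\log(3s_\nu)+C_{2,\nu}$ directly. So your strategy is right; you can simply sharpen the successor identification and the proof becomes short.
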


\begin{proof}
Since if $P_n/Q_n$ is a $1$-rational, then $P_{n+1}/Q_{n+1}$ is an $\infty$-rational, from the facts in Remark~\ref{rk:OO}, 
if $p_{oo,k}/q_{oo,k}$ is a RCF principal convergent $P_n/Q_n$, 
then $q_{oo,k+1} = 2Q_{n+1}+Q_n$.
Then, 
$$\frac{\log q_{oo,k+1}}{(q_{oo,k})^\nu} - \frac{\log Q_{n+1}}{Q_n^\nu} \le \frac{\log 3}{Q_n^\nu}.$$
Moreover, if $p_{oo,k}/q_{oo,k}$ is not a RCF principal convergent, then we have $q_{oo,k} = iQ_n + Q_{n-1}$ and ${q_{oo,k+1}} = {(i+2)Q_{n}+Q_{n-1}}$ for some $n$ and $1\le i\le a_n-2$.
Then, 
$$\frac{\log q_{oo,k+1}}{(q_{oo,k})^\nu}  = \frac{\log((i+2)Q_n+Q_{n-1})}{(iQ_n+Q_{n-1})^\nu} \le \frac{\log Q_n}{Q_n^\nu} + \frac{\log(i+3)}{(iQ_n)^\nu} \le \frac{\log Q_n + \log  {s_\nu}}{Q_n^\nu},$$
 {where $s_\nu := \sup_{i\ge 1} \frac{\log(i+3)}{i^\nu}<\infty.$}
 {In summary,} we have
$$\left|\sum_{k=1}^\infty \frac{\log q_{oo,k+1}}{(q_{oo,k})^\nu}- \sum_{\frac{P_n}{Q_n}:1\text{-rational}}\frac{\log Q_{n+1}}{Q_n^\nu}\right|
< \sum_{n=1}^\infty \frac{\log  {3s_\nu} + \log Q_n}{Q_n^\nu}\le C_{1,\nu}\log {(3s_\nu)}  + C_{2,\nu},$$
 {where $C_{1,\nu}$ and $C_{2,\nu}$ are constants in \eqref{eq:sum.1/qR}.}
\end{proof}

\begin{prop}\label{pr:est.BE}
There exists $C_\nu>0$ such that, for all $x\in\mathbb{R}\setminus\mathbb{Q}$, one has
$$\left| B_{even,\nu}(x)-\sum_{n=0}^\infty \frac{\log q_{e,n+1}}{(q_{e,n})^\nu}\right|\le C_\nu.$$
\end{prop}

\begin{prop}\label{pr:est.BOO}
There exists $C_\nu>0$ such that, for all $x\in\mathbb{R}\setminus\mathbb{Q}$, one has
$$\left|\frac 12 B_{oo,\nu}(x)-\sum_{n=0}^\infty\frac{\log q_{oo,n+1}}{(q_{oo,n})^\nu}\right|\le C_\nu.$$
\end{prop}

We can show the above propositions in a similar way to \cite[Theorem 13]{LMNN10}.
For completeness, we give their proofs in Appendix~\ref{ap:C}.
Combining the above propositions with \eqref{eq:B-logq/q}, we have Theorem~\ref{main3}.

\begin{rmk}
The operator $T_{1,\nu}$ in \eqref{eq:op.alpha} associated to the classical Brjuno function has the same spectral property to Proposition~\ref{pr:spec.O}, see \cite[Theorem 2.6]{MMY01}, which implies $B_{1,\nu}\in L^p(dx)$.
Since $B_{even,\nu}$ and $B_{oo,\nu}$ are positive functions, $\|B_{even,\nu}\|_p \le \|B_{even,\nu}+\frac{1}{2}B_{oo,\nu}\|_p = \|B_{1,\nu}+\varphi\|_p<\infty$ with an $L^\infty$ function $\varphi$.

However, $L^p(dx)$ is not equivalent to $L^p(dm_{even})$. The space $L^p(dm_{even})$ is a proper subspace of $L^p(dx)$.
It is caused from the infinite total mass of $dm_{even} = \frac{dx}{1-x^2}$.
The operator associated to the ECF-Brjuno function on $L^p(dm_{even})$ has spectral radius exactly $1$ which does not guarantee the existence of the solution of the functional equation \eqref{eq:fun.e} in $L^p(dm_{even})$.

If $\nu\le 1$, then $B_{even,\nu}$ is not $L^p(dm_{even})$ for all $p\ge 1$.
If $\nu=1$, then the value $B_{even,\nu}$ in the neighborhood of $1$ has positive constant lower bound, and if $\nu<1$, then $B_{even,\nu}(x)$ goes to infinity when $x$ goes to $1$. If $\veps \ll \frac{1}{2}$, then $A_{even}^{k}(x)\in [\frac12,1]$ for $i\le \lceil \frac{1}{\veps}-2\rceil$.
Then, 
$$B_{even,\nu}(1-\veps) = \sum_{k=0}^n (1-k\veps)^\nu \log \frac{1-k\veps}{1-(k+1)\veps} + (1-(n+1)\veps)^\nu B_{even,\nu}\left(\frac{1-n\veps}{1-(n+1)\veps}\right),$$ where
$n= \lceil\frac{1}{\veps}-2\rceil$.
Since $-\log x \gtrsim 1-x$ near $1$, each term of the second summand $(1-k\veps)^\nu \log \frac{1-k\veps}{1-(k+1)\veps}\gtrsim \veps(1-k\veps)^{\nu-1}$. Combining with $(1-k\veps)<2\veps$ and $\nu\le 1$, we have $\sum_{k=0}^n(1-k\veps)^{\nu-1}\ge \frac{\frac1{\veps}-1}{(2\veps)^{1-\nu}}$.
Thus we have
$$\int_0^1 \frac{B_{even,\nu}(x)^p dx}{1-x^2}\ge \int_0^{1/2} \frac{(B_{even,\nu}(1-\veps))^p d\veps}{\veps(2-\veps)} \ge \int_0^{1/2}\left(\frac{1-\veps}{\veps^{1-\nu}}\right)^p\frac{d\veps}{\veps}\gtrsim \int_0^{1/2}\frac{d\veps}{\veps^{(1-\nu)p+1}}=\infty.$$
\end{rmk}

\appendix

\section{Proof of Lemma~\ref{le:MMYlemO}, Theorem~\ref{th:op1} and \ref{th:op2}}\label{ap:B}

\begin{proof}[Proof of Lemma~\ref{le:MMYlemO}]
Recall that we denote by $n=n(x,x')$, $\delta=\delta(x,x')$  and $g=\frac{\sqrt{5}-1}{2}$.
We follow the notations in \eqref{eq:drop.o}. 

(1) For $\ell<n-\delta$, by \eqref{eq:beq} and \eqref{eq:l<n-del1},
we have $g(1-g)\le \frac{\beta_{o,\ell}(x)}{\beta_{o,\ell}(x')} \le g^{-1}(1-g)^{-1}.$
For $n-\delta\le i \le n-1$, by \eqref{eq:it:B}, \eqref{eq:it:C} and \eqref{eq:it:D}, we have $\frac{1}{3} \le x_i/x_i' \le 3$.
Thus, we have the conclusion by taking $\kappa_1 =  {9g^{-1}(1-g)^{-1}}$.

(2) Since $\beta_{o,\ell} \le \beta_{o,n}$ for $\ell\ge n$, it is enough to show that \eqref{eq:it:lemO2}  {holds} for $\ell=n$.
For each case \ref{it:B}, \ref{it:C} and \ref{it:D}, we have $|x-x'| = |x-x''|+|x''-x|$.

In the case \ref{it:D},  without loss of generality, let us assume that $q_n>q_n'$.
From \eqref{eq:D:x-x''}, \eqref{eq:D:max} and \eqref{eq:beq}, we have
$$|x-x'| \ge \frac{\beta_{o,n-1}(x)+\beta_{o,n-1}(x')}{q_n+q_n'} \ge \frac{1-g}{2q_n^2}\ge \frac{(1-g)g^2}{2}\beta_{o,n-1}^2(x).$$
By Lemma~\ref{le:MMYlemO}-\eqref{it:lemO1}, we have $|x-x'|\ge \frac{(1-g)g^2}{2}\kappa_1^{-2}\beta_{o,n-1}^2(x')$, then 
\begin{equation}\label{eq:le3.1}
\max\{\beta_{o,n}(x),\beta_{o,n}(x')\}\le \sqrt{2} \kappa_1 g^{-1}(1-g)^{-1/2}|x-x'|^{1/2}.
\end{equation}

In the case \ref{it:B} and \ref{it:C}, from \eqref{eq:B} and \eqref{eq:C}, we have
$$|x-x'|=q_n^{-1}(\beta_{o,n}(x)+\beta_{o,n}(x'))\ge q_n^{-1}\max\{\beta_{o,n}(x),\beta_{o,n}(x')\}.$$
From \eqref{eq:beq} and Lemma~\ref{le:MMYlemO}-\eqref{it:lemO1}, we have $q_n^{-1}\ge g \beta_{o,n-1}(x)\ge \kappa_1^{-1} g \beta_{o,n-1}(x')$.
Since $\beta_{o,n-1}(x)\ge \beta_{o,n}(x)$, we have $|x-x'|\ge g \max\{\kappa_1^{-2} \beta_{o,n}^2(x), \kappa_1^{-1}\beta_{o,n}^2(x')\}$, which implies that
\begin{equation}\label{eq:le3.2}
\max\{\beta_{o,n}(x),\beta_{o,n}(x')\}\le g^{-1/2}\kappa_1|x-x'|^{1/2}.
\end{equation}

In the case \ref{it:A}, 
we have $|x_n^{-1}-{x'}_n^{-1}|\ge 1$
 since $x$ and $x'$ do not belong to two adjacent branches of 
$A_{odd}^{n+1}$. Thus we have $|x_n-x_n'|\ge x_n x_n'$.
Suppose that $x_n>x_n'$.
If $x_n'\ge x_n/2$, then we have $|x_n-x_n'|\ge x_n^2/2$.
If $x_n' < x_n/2$, then we also have $|x_n-x_n'|=x_n\left|1-\frac{x_n'}{x_n}\right|\ge x_n/2 >x_n^2/2$.
Symmetrically, we can show that if $x_n\le x_n'$, then we have $|x_n-x_n'| \ge {x'}_n^2/2$.
From \eqref{eq:l<n-del3}  {and Lemma~\ref{le:MMYlemO}-\eqref{it:lemO1}}, we have
\begin{align*}
|x-x'| 
& \ge \frac{1}{2}\beta_{o,n-1}(x)\beta_{o,n-1}(x')\max\{x_n^2,{x'}_n^2\} \\
& \ge \frac{\max\{\beta_{o,n-1}(x),\beta_{o,n-1}(x')\}^2\max\{x_n^2,{x'}_n^2\}}{2\kappa_1^{  2}} \ge \frac{1}{2\kappa_1}\max\{\beta_{o,n}(x),\beta_{o,n}(x')\}^2,
\end{align*}
which implies that
\begin{equation}\label{eq:le3.3}
\max\{\beta_{o,n}(x),\beta_{o,n}(x')\}\le \sqrt{2\kappa_1}|x-x'|^{1/2}.
\end{equation}

Combining \eqref{eq:le3.1}, \eqref{eq:le3.2} and \eqref{eq:le3.3}, we have $\max\{\beta_{o,n}(x),\beta_{o,n}(x)\}\le \kappa_2 |x-x'|^{1/2}$ by letting $\kappa_2 =\sqrt{2} \kappa_1 g^{-1}(1-g)^{-1/2}$.

(3)
By \eqref{eq:x_n}, the end-points of $J$ are attained by putting $x_m=0$ or $1$ in $\frac{p_m+p_{m-1}\veps_{m}x_m}{q_m+q_{m-1}\veps_{m}x_m}$, which are $\frac{p_m}{q_m}$ and $\frac{p_m + \veps_m p_{m-1}}{q_m+ \veps_{m} q_{m-1}}$.
Thus $|J|^{-1} = \left|\frac{p_m}{q_m}-\frac{p_m + \veps_{m} p_{m-1}}{q_m+ \veps_{m} q_{m-1}}\right|^{-1} = q_m(q_m + \veps_{m} q_{m-1})$.
By \eqref{eq:x_n}, \eqref{eq:det.pq} and \eqref{eq:qx-p}, we have $\left|\frac{dA_{odd}^m(x)}{dx}\right| = |q_m+q_{m-1}\veps_{m}x_m|^2$ for $x\in J$.
From \eqref{eq:q.O3},  we have $q_{m}^2(g^{-1}-1)<  |J|^{-1} < q_m^2(g^{-1}+1)$  {and} $q_m^2(g^{-1}-1)^2 < \left|\frac{dA_{odd}^m(x)}{dx}\right|  < q_m^2(g^{-1}+1)^2$.

(4) 
Let $x$ be  {the} common end-point of $J$ and $J'$.

If $A_{odd}^m(x)=0$, then we have $x= p_m/q_m$ and the other end-points are $\frac{p_m\pm p_{m-1}}{q_m \pm q_{m-1}}$ by \eqref{eq:x_n}.
By Lemma~\ref{le:MMYlemO}-\eqref{it:lemO3}, $ {\frac{g^{-1}-1}{g^{-1}+1}}\le |J|/|J'|\le  {\frac{g^{-1}+1}{g^{-1}-1}} = 2g^{-1}+1$.

If $A_{odd}^{m}(x)=1$, then we have $x= \frac{p_m+\veps_m p_{m-1}}{q_m+\veps_m q_{m-1}}$ and one of other end-points is $\frac{p_m}{q_m}$.
Let $\frac{p_m'}{q_m'}$ be the other end-point.
By \eqref{eq:D}, we have $p_m = p_m' + 2 p_{m-1}$ and $q_m = q_m' + 2 q_{m-1}$.
Since 
$\left|\frac{p_m+\veps_m p_{m-1}}{q_m+\veps_m q_{m-1}}-\frac{p_m}{q_m}\right| 
= \frac{1}{q_m|q_m+\veps_mq_{m-1}|}$ and 
$\left|\frac{p_m+\veps_m p_{m-1}}{q_m+\veps_m q_{m-1}}-\frac{p_m'}{q_m'}\right|
 = \frac{2+\veps_m}{q_m'|q_m+\veps_{m} q_{m-1}|}$,
we have $|J|/|J'| =  \frac{q_m'}{q_m(2+\veps_m)}$ or $\frac{(2+\veps_m)q_m}{q_m'}$.
By \eqref{eq:l<n-del1}, we have $p_{m-1}=p_{m-1}'$ and $q_{m-1}=q_{m-1}'$.
By \eqref{eq:q.O2}, $q_m/q_m' = 1+2q_{m-1}'/q_m'\le 1+ 2g^{-1}$.
Thus $1\le \frac{(2+\veps_m)q_m}{q_m'}\le 3(1+2g^{-1})$.

We have a conclusion by letting $\kappa_3 = 3(1+2g^{-1})$.
\end{proof}

\begin{proof}[Proof of Theorem~\ref{th:op1}]
(1) Let $x,~x' \in [0,1]$, $m\ge 0$, $0<\eta\le 1$ and $\nu>0$.
We need to estimate $|T_{odd,\nu}^m f(x) - T_{odd,\nu}^m f(x')|$ for $f\in \mathcal{C}^\eta_{[0,1]}$.
Let $n=n(x,x')$.
We consider the following three cases: \\
(a) $m>n$; From \eqref{eq:it.TO} and Lemma \ref{le:MMYlemO}-\eqref{it:lemO2}, we have
\begin{align}
|T_{odd,\nu}^m f(x) - T_{odd,\nu}^m f(x')| 
& \nonumber= |\beta_{o,m-1}^\nu(x) f(x_m) - \beta_{o,m-1}^\nu(x') f(x_m')| \\
& \nonumber \le \|f\|_{\mathcal{C}^0}\left[\beta_{o,n}^\nu(x) \beta_{o,m-n-2}^\nu(x_{n+1}) + \beta_{o,n}^\nu(x') \beta_{o,m-n-2}^\nu(x_{n+1}')\right] \\
& \label{eq:T^m(a)} \le 2\kappa_2^\nu \|f\|_{\mathcal{C}^0}\|{\beta_{e,m-n-2}}^\nu\|_{\mathcal{C}^0} |x-x'|^{\nu/2}.
\end{align}
(b) $m\le n-\delta$; We have
\begin{align*}
|T_{odd,\nu}^m f(x) - T_{odd,\nu}^m f(x')| 
& \le \beta_{o,m-1}^\nu(x) |f(x_m)-f(x_m')| + |f(x_m')| |\beta_{o,m-1}^\nu(x) - \beta_{o,m-1}^\nu(x')|.
\end{align*}
Since $x$ and $x'$ are contained in the same branch of $A_{odd}^m$,  say $J$, from Lemma~\ref{le:MMYlemO}-\eqref{it:lemO3} and \eqref{eq:beq}, we have
$$|x_m-x_m'|\le\sup_{x_\ast\in J}\left|\frac{dA_{odd}^m(x_\ast)}{dx}\right||x-x'|\le \frac{q_m^2}{(1-g)^2}|x-x'|\le \frac{(\beta_{o,m-1}(x))^{-2}}{g^2(1-g)^2} |x-x'|.$$
From Lemma~\ref{le:MMYlemO}-\eqref{it:lemO1}, \eqref{eq:l<n-del2} and \eqref{eq:beq}, we have
\begin{align*}
|\beta_{o,m-1}^\nu(x) - \beta_{o,m-1}^\nu(x')| 
& \le \nu \max\{\beta_{o,m-1}(x),\beta_{o,m-1}(x')\}^{\nu-1}|\beta_{o,m-1}(x)-\beta_{o,m-1}(x')| \\
& \le \nu \kappa_1^{\nu-1} \beta_{o,m-1}^{\nu-1}(x)q_{m-1}|x-x'| \le \nu \kappa_1^{\nu-1}g^{-1} \beta_{o,m-1}^{\nu-2}(x)|x-x'|.
\end{align*}
Combining the above three equations, we have
\begin{align*}
&|T_{odd,\nu}^m f(x) - T_{odd,\nu}^m f(x')| \\
& \le \left\{\beta_{o,m-1}^\nu(x) \left(\frac{(\beta_{o,m-1}(x))^{-2}}{g^2(1-g)^2}|x-x'|\right)^{\eta} |f|_\eta + \|f\|_{\mathcal{C}^0}\nu \kappa_1^{\nu-1} g^{-1} (\beta_{o,m-1}(x))^{\nu-2}|x-x'| \right\} \\
& = |x-x'|^{\eta}\beta_{o,m-1}^{\nu-2\eta}(x)\left\{\left(\frac{1}{g(1-g)}\right)^{2\eta}|f|_\eta + \frac{\|f\|_{\mathcal{C}^0}\nu \kappa_1^{\nu-1}}{g} \left(\frac{|x-x'|}{\beta_{o,m-1}^{2}(x)}\right)^{1-\eta} \right\}.
\end{align*}
From \eqref{eq:beq} and Lemma~\ref{le:MMYlemO}-\eqref{it:lemO3}, we have $|x-x'|(\beta_{o,m-1}(x))^{-2} \le  {(1-g)^{-2}}q_{m}^2|J|\le  {(1-g)^{-2}}g^{-1}$.
By letting $K = \left(g^{-1}(1-g)^{-1}\right)^{2\eta} + \nu \kappa_1^{\nu-1} g^{-1} (g^{-1}(1-g)^{-2})^{1-\eta}$, we have
\begin{equation}\label{eq:T^m(b)}
 |x-x'|^{-\eta}|T_{odd,\nu}^m f(x) - T_{odd,\nu}^m f(x')| \le  {K \|f\|_{\eta}} \beta_{o,m-1}^{\nu-2\eta}(x).
 \end{equation}\\
(c) $n-\delta<m\le n$; It is enough to consider the case of \ref{it:B}, \ref{it:C} and \ref{it:D}.
Then, $x$ and $x'$ belong to the adjacent branches of $A_o^{m}$.
In each case, we defined $x''$ as the common end-point of the adjacent branches.
From \eqref{eq:T^m(b)} and Lemma~\ref{le:MMYlemO}-\eqref{it:lemO1}, we have
\begin{align}
\left|T_{odd,\nu}^m f(x) - T_{odd,\nu}^m f(x')\right| 
&\nonumber \le |T_{odd,\nu}^m f(x) - T_{odd,\nu}^m f(x'')| + |T_{odd,\nu}^m f(x') - T_{odd,\nu}^m f(x'')| \\
&\nonumber \le 2 K \|f\|_\eta \max \left\{\beta_{o,m-1}^{\nu-2\eta}(x)|x-x''|^\eta, \beta_{o,m-1}^{\nu-2\eta}(x')|x'-x''|^\eta\right\} \\
&\label{eq:T^m(c)} \le 2 K \|f\|_\eta \kappa_1^{\nu-2\eta}\beta_{o,m-1}^{\nu-2\eta}(x)|x-x'|^\eta.
\end{align}

Thus we have the first assertion.
\medskip

\noindent (2) and (3):  {Let $\nu\ge 2\eta$, $f\in \mathcal{C}^\eta_{ {[0,1]}}$ and $x,~x'\in [0,1]$.
By \eqref{eq:beO}  {and \eqref{eq:it.TO}}, we have 
$$\|T_{odd,\nu}^m f\|_{\mathcal{C}^0} \le g^{ {(m-1)}\nu}\|f\|_{\mathcal{C}^0}.$$
If $m\le n$, by \eqref{eq:T^m(b)} and \eqref{eq:T^m(c)}, we have
$$|T_{odd,\nu}^mf|_\eta \le 2K \|f\|_\eta \kappa_1^{\nu-2\eta} g^{(m-1)(\nu-2\eta)}.$$
By the above arguments in (a) and \eqref{eq:beO}}, for all $m > n$, we have
$$|T_{odd,\nu}^m f(x) - T_{odd,\nu}^m f(x')| 
 \le 2\kappa_2^\nu \|f\|_{\mathcal{C}^0} |x-x'|^{\eta}|x-x'|^{\nu/2-\eta} g^{(m-n-  2)\nu}.
$$
From \eqref{eq:l<n-del3} and Lemma~\ref{le:MMYlemO}-\eqref{it:lemO1}, we have 
$|x-x'| = |x_{n-2}-x_{n-2}'|\beta_{e,n-2}(x)\beta_{e,n-2}(x') \le g^{2(n-  2)}$.
Since $(n-2)(\nu-2\eta)+(m-n-2)\nu > (m-2)(\nu-2\eta)-2\nu,$ we have
$$|T_{odd,\nu}^m f|_{\eta} \le 2\kappa_2^\nu\|f\|_\eta g^{ {(m-2)}(\nu-2\eta) {-2\nu}}.$$
Combining the above equations, for all $m\ge 0$, we have
$$\|T_{odd,\nu}^m f\|_\eta \le  \max\{2\kappa_2^\nu g^{-2\nu}, 2K\kappa_1^{\nu-2\eta}\} g^{ {(m-2)}(\nu-2\eta)}\|f\|_\eta,$$
 {which implies that} $$\rho(T_{odd,\nu})\le g^{\nu-2\eta}.$$
If $\nu>2\eta$, then $g^{\nu-2\eta}$ is less than $1$.
Thus the operator ${\bf B_{odd,\nu}}$ is well-defined on $\mathcal{C}^\eta_{[0,1]}$ and it can be represented  {by the} Neumann series as in the statement.
If $\nu=2\eta$, then we have the statement by taking $c_{3,\nu}=\max\{2\kappa_2^\nu g^{-2\nu}, 2K\}$.
\end{proof}

\begin{proof}[Proof of Theorem~\ref{th:op2}]
Assume that $\nu\le 2\eta$. Let $x,x'\in[0,1]$ and $n=n(x,x')$.
From \eqref{eq:T^m(a)}, \eqref{eq:T^m(b)} and \eqref{eq:T^m(c)}, we have
\begin{align*}
&|{\bf B_{odd,\nu}}f(x) -  {\bf B_{odd,\nu}}f(x')| 
 \le \sum_{m=0}^n |T_{odd,\nu}^m f(x)-T_{odd,\nu}^m f(x')| + \sum_{m=n+1}^\infty |T_{odd,\nu}^m f(x)-T_{odd,\nu}^m f(x')| \\
&\qquad\qquad\qquad\qquad \le c \|f\|_\eta \bigg(|x-x'|^{\eta-\nu/2}\sum_{m=0}^n  \beta_{o,m-1}(x)^{\nu-2\eta} + \sum_{m=n+1}^\infty g^{(m-n-  2)\nu} \bigg)|x-x'|^{\nu/2},
\end{align*}
where $c = \max\{2\kappa_2^\nu, 2K \kappa_1^{\nu-2\eta}g^{-\nu}\}$.
By \eqref{eq:beq} and \eqref{eq:beO}, $|x-x'|^{\eta-\nu/2}\sum_{m=0}^n (\beta_{o,m-1}(x))^{\nu-2\eta}$ is bounded above by
\begin{equation}\label{eq:sumbeo}
 \frac{ {|x-x'|^{\eta-\nu/2}}}{\beta_{o,n-1}(x)^{2\eta-\nu}} \sum_{m=0}^n(x_mx_{m+1}\cdots x_{n-1})^{2 \eta - \nu} 
 \le \frac{ {(|J|q_n^{2})^{\eta - \nu/2}}\sum_{m=0}^n g^{(n-m)(2\eta-\nu)}}{ {(1-g)^{2\eta-\nu}}}.
\end{equation}
If $x,~x'$ are in the same branch, then let $J$ be the branch of $A^n$, otherwise, i.e. $x,~x'$ are in the adjacent branches of $A^n$, then let $J$ be the union of the branches.
From Lemma~\ref{le:MMYlemO}-\eqref{it:lemO3} and \eqref{it:lemO4}, $|x-x'|q_n^2\le |J|q_n^2<2\kappa_3/g$.
If $2\eta>\nu$, then \eqref{eq:sumbeo} is uniformly bounded and 
$$|{\bf B_{odd,\nu}}f(x) - {\bf B_{odd,\nu}}f(x')|
\le c' \|f\|_\eta|x-x'|^{\nu/2},$$
where $c' = c\left(\frac{(2\kappa_3/g)^{\eta-\nu/2}}{(1-g)^{2\eta-\nu}(1-g^{2\eta-\nu})}+\frac{g^{\nu}}{1-g^\nu}\right)$,
which proves the first assertion.

If $2\eta = \nu$, then  {\eqref{eq:sumbeo} is bounded above by $n(x,x')$.}
Thus we have
$$|{\bf B_{odd,\nu}}f(x) - {\bf B_{odd,\nu}}f(x')|
\le c\|f\|_\eta (n(x,x')+1+ g^\nu(1-g^\nu)^{-1}) |x-x'|^{\nu/2}.$$
Since $x,~x'$ are in the same branch of $A_{odd}^{n-2}$, say $J'$, we have $|x-x'|\le |J'|\le  {g^{-1}}q_{n-2}^{-2}$ by Lemma~\ref{le:MMYlemO}-\eqref{it:lemO3}.
By \eqref{eq:beq}, we have $q_{n-2}^{-1} \le  {(1-g)^{-1}}\beta_{o,n-3}(x)\le  {(1-g)^{-1}g^{n- 3}}$.
Thus, we have $|x-x'|\le  {g^{2n-7}(1-g)^{-2}}$,  {which means that 
$n \le \frac{\log {|x-x'|^{-1}} + \log(g^{-1}(1-g)^{-2})}{2\log(1/g)}$.
There are constants $c_{1}''>0$ and $\kappa_2''>0$ such that}
$$|{\bf B_{odd,\nu}}f(x) - {\bf B_{odd,\nu}}f(x')| \le \|f\|_\eta  {(\kappa_1'' \log|x-x'|^{-1}+\kappa_2'')} |x-x'|^{\nu/2}.$$
 {If $\eta'<\nu/2$, then $|{\bf B_{odd,\nu}}f(x) - {\bf B_{odd,\nu}}f(x')| \le \|f\|_\eta (\kappa_1''\log|x-x'|^{-1}+\kappa_2'')|x-x'|^{\nu/2-\eta'} |x-x'|^{\eta'}$, and $(\kappa_1''\log|x-x'|^{-1}+\kappa_2'')|x-x'|^{\nu/2-\eta'}$ is uniformly bounded above.}
\end{proof}

\section{Proof of Proposition \ref{pr:est.BE} and \ref{pr:est.BOO}}\label{ap:C}

\begin{proof}[Proof of Proposition~\ref{pr:est.BE}]
For $x\in \mathbb{R}\setminus \mathbb{Q}$, let 
\begin{align*}
I_{\ast}(x) & :=\left\{n\in \mathbb{N} :x_{e,n}\in\left(1/3,1\right]\right\}=\left\{n:a_{e,n+1} = 2\right\}, \\
I_{\ast\ast}(x) & :=\left\{n\in \mathbb{N} :x_{e,n}\in\left(0,1/3\right]\right\}=\left\{n:a_{e,n+1}\ge 4\right\}.
\end{align*}
Let $x_{e,m+1},\cdots, x_{e,m+\ell}\in (1/2,1]$ and $x_{e,m},~x_{e,m+\ell+1}\not\in(1/2,1]$.
By the same argument in \cite[Proof of Thm 13]{LMNN10}, 
for $0 < t \le \ell-1$, we have
\begin{equation}\label{eq:estE1}
(x_{e,-1}x_{e,0}\cdots x^e_m x_{e,m+1} \cdots x_{e,m+t})^\nu \log \frac{1}{x_{e,m+t+1}}
<(x_{e,1-}x_{e,0}\cdots x^e_m)^\nu \frac{3}{\ell+2},
\end{equation}
and for $t=0$, we have
$(x_{e,-1}x_{e,0}\cdots x^e_m)^\nu \log{(1/{x_{e,m+1}})} < (x_{e,-1}x_{e,0}\cdots x^e_m)^\nu \ell^{-1}.$
Then, we have
$$\sum_{t=0}^{\ell-1}(x_{e,-1}x_{e,0}\cdots x_{e,m+t})^\nu \log \frac{1}{x_{e,m+t+1}}<3(x_{e,-1}x_{e,0}\cdots x^e_m)^\nu.$$
If $x_{e,m+1},\cdots,x_{e,m+\ell} \in (1/3,1/2]$ and $x_{e,m},x_{e,m+\ell+1}\not\in (1/3,1/2]$, then
\begin{equation}\label{eq:estE2}
\sum_{t=0}^{\ell-1} (x_{e,-1}x_{e,0}\cdots x_{e,m+t})^\nu \log \frac 1{x_{e,m+t+1}} 
\le \sum_{t=0}^{\ell-1} (x_{e,0}\cdots x_{e,m-1}x^e_m)^\nu \frac {\log 3}{3^t}
<(\log 3)(x_{e,0}\cdots x_{e,n-1}x_{e,n})^\nu.
\end{equation}
With a similar argument of \cite{LMNN10}, we have
$$\sum_{n\in I_{\ast}(x)} (x_{e,-1}x_{e,0}x_{e,1}\cdots x_{e,n-1})^\nu \log{\frac{1}{x_{e,n}}} < 3.$$
Then, it is sufficient to show that there exists $C_\nu>0$ such that
\begin{equation}\label{Eq1}
\left|\sum_{n\in I_{\ast\ast}(x)}\beta_{e,n-1}^\nu \log{\frac{1}{x_{e,n}}}-\sum_{n\in I_{\ast\ast}(x)}\frac{\log{q_{e,n+1}}}{(q_{e,n})^\nu}\right|<C_\nu.
\end{equation}

The left hand side of \eqref{Eq1} is bounded above by
\begin{equation}\begin{split}\label{eq:apC}
\sum_{n\in I_{\ast\ast(x)}}\left| \beta_{o,n-1}^\nu \log \beta_{o,n} q_{o,n+1} \right|
&+  \sum_{n\in I_{\ast\ast(x)}}\left| \beta_{e,n-1}^\nu \log \beta_{e,n-1} \right| \\
&+\sum_{n\in I_{\ast\ast(x)}} \left|\left\{\frac{1}{(q_n^e)^\nu} - (\beta_{e,n-1})^\nu\right\} \log q_{e,n+1}\right|.
\end{split}\end{equation}

From now on, let $n\in I_{\ast\ast}(x)$.
Since $x_{e,n}\le 1/3$ and $a_{n+1}^e\ge 4$, we have $q_{e,n-1}/q_{e,n} \le  {1/3}$ and $1+\veps_{e,n} q_{e,n-1}{(q_{e,n})^{-1}}x_{e,n} \ge 2/3$.
By the fact that \eqref{eq:qx-p} holds for the ECF, we have
\begin{equation}\label{eq:Eq2E}
\beta_{e,n-1}^\nu = \frac{1}{(q_{e,n} + \veps_{e,n}q_{e,n-1}x_{e,n})^\nu}\le \frac{3^\nu}{2^\nu (q_{e,n})^\nu},
\end{equation}
\begin{equation}\label{eq:Eq3E}
\beta_{e,n} =  \frac{1}{q_{e,n+1} + \veps_{e,n+1}q_{e,n}x_{e,n+1}}\le \frac{1}{q_{e,n+1}-q_{e,n}}\le \frac{3}{2 q_{e,n+1}}.
\end{equation}
Thus  {the first summand of \eqref{eq:apC} is bounded above by}
$$\sum_{n\in I_{\ast\ast}(x)}\beta_{e,n-1}^\nu \log\beta_{e,n}q_{e,n+1}\le \sum_{n\in I_{\ast\ast}(x)}\frac{3^\nu \log(3/2)}{2^\nu q_{e,n}^\nu}<\frac{3^\nu \log(3/2)}{4^\nu}.$$
Since $\{q_{e,n}\}$ is increasing, $1+\veps_{e,n} q_{e,n-1}(q_{e,n})^{-1}x_{e,n} \le 2$.
Thus we have 
$$(\beta_{e,n-1})^{-1} = q_{e,n}(  1+\veps_{e,n}q_{e,n-1}(q_{e,n})^{-1}x_{e,n}) \le 2q_{e,n}.$$
Note that if $n$ is the $i$th number in $I_{\ast\ast}(x)$, then we have $q_{n} > 3^{i-1}$, since $q_{n}\ge 3q_{n-1}$ if $n\in I_{\ast\ast}(x)$, and $\{q_n\}_{n\in\mathbb{N}}$ is increasing.
By \eqref{eq:Eq2E} and the fact that $\log N \le N^{\nu/2}$ for large enough $N$,  {the second summand of \eqref{eq:apC} is bounded above by}
\begin{equation}\label{eq:Eq4E}
\sum_{n\in I_{\ast\ast}(x)} \frac{3^\nu \log (2 q_{e,n})}{(2q_{e,n})^\nu} 
\le S_\nu +  \sum_{n\in I_{\ast\ast}(x)}\frac{3^\nu}{(2q_{e,n})^{\nu/2}} \le S_\nu + \frac{3^{3/2}}{6^{\nu/2}-2^{\nu/2}},
\end{equation}
 {where $S_\nu = \sum_{n\le N}\frac{3^\nu \log (2 q_{e,n})}{(2q_{e,n})^\nu}$.}
 {Since \eqref{eq:qx-p} holds for the ECF}, we have $\veps_{e,n}\beta_{e,n} q_{e,n-1}+\beta_{e,n-1}q_{e,n} = 1$.
By \eqref{eq:Eq3E}, the last summand of \eqref{eq:apC} is bounded above by
\begin{align}
&\label{eq:logq/qE} c_\nu \sum_{n\in I_{\ast\ast}(x)} \max\left\{\frac{1}{(q_{e,n})^{\nu-1}}, \beta_{e,n-1}^{\nu-1}\right\} \left|\veps_{e,n} \frac{q_{e,n-1}}{q_{e,n}} \beta_{e,n} \right|\log q_{e,n+1},
\end{align}
for some $c_\nu>0$.
If $\nu\ge 1$, then \eqref{eq:logq/qE} is bounded above by 
$$\sum_{n\in I_{\ast\ast}(x)}  \beta_{e,n}\log q_{e,n+1} 
\le \frac{3}{2} \sum_{n\in I_{\ast\ast}(x)}  \dfrac{\log q_{e,n+1}}{q_{e,n+1}} 
\lesssim\sum_{i=1}^\infty \frac{1}{\sqrt{3}^i} .$$
If $0<\nu<1$, then 
\eqref{eq:logq/qE} bounded above by
\begin{align*}
&\sum_{n\in I_{\ast\ast}(x)} \max\left\{\frac{1}{\beta_{e,n-1}^{1-\nu}}, q_{e,n}^{1-\nu}\right\} \dfrac{q_{e,n-1}}{q_{e,n}}\beta_{e,n}\log q_{e,n+1} \\
& \le \sum_{n\in I_{\ast\ast}(x)} \max \left\{\frac{\beta_{e,n}}{\beta_{e,n-1}^{1-\nu}},\frac{3q_{e,n-1}}{2q_{e,n}^\nu q_{e,n+1}}\right\}\log q_{e,n+1} 
 \le \sum_{n\in I_{\ast\ast}(x)} \max \left\{\beta_{e,n}^{\nu},\frac{3}{2 q_{e,n+1}^\nu}\right\}\log q_{e,n+1} \\
& \le \sum_{n\in I_{\ast\ast}(x)} \frac{3}{2 q_{e,n+1}^\nu}\log q_{e,n+1} \lesssim_\nu \sum_{i=1}^\infty \frac{1}{(3^{\nu/2})^i}.
\end{align*}
\end{proof}

\begin{proof}[Proof of Proposition~\ref{pr:est.BOO}]
For $x\in \mathbb{R}\setminus \mathbb{Q}$, let 
$$J_\ast(x) = \left\{ n: x_{o,n}\in\left[0,1/2\right)\right\} 
\quad \text{and} \quad J_{\ast\ast}(x)=\left\{n:x_{o,n}\in\left[1/2,1 \right]\right\}.$$
Since $\iota A_{oo} = A_{even} \iota$, if $y = \iota(x)$, then $\iota(x_{oo,n}) = y_{e,n}$.
By the arguments in the proof of Proposition~\ref{pr:est.BE}, we have
$$\sum_{n\in J_\ast (x)} \left[(x+1) \iota(x_{oo,-1})\iota(x_{oo,0})\cdots \iota(x_{oo,n-1})\right]^\nu \log\frac{1}{\iota(x_{oo,n})} < 6.$$
Thus, similarly, It is sufficient to show that there exists $C_\nu>0$ such that
\begin{equation}\label{Eq2}
\left| \frac 12 \sum_{n\in I_{\ast\ast}(x)} \beta_{oo,n-1}^\nu \log{\frac{1}{\iota(x_{oo,n})}}-\sum_{n\in I_{\ast\ast}(x)}\frac{\log{q_{oo,n+1}}}{q_{oo,n}^\nu}\right|<C_\nu.
\end{equation}
By \eqref{eq:beOO}, we have 
$\beta_{oo,n-1}q_{oo,n}+\veps_{oo,n}\beta_{oo,n} q_{oo,n-1} = 2.$
Also, we have $\iota(x_{oo,n})\le 1/3$, $q_{oo,n-1}/q_{oo,n}\le 1$ and $q_{oo,n+1}\ge 3q_{oo,n}$,
with the same argument in the proof of Proposition~\ref{pr:est.BE}, we have the conclusion.
\end{proof}

\section{The Brjuno functions as cocycles}\label{ap:cocycle}
In this section, we show how to interpret $B_{0,\nu}$, $B_{odd,\nu}$ and $B_{even,\nu}$ as cocycles under the actions on the real line respectively of the modular group $\mathrm{PSL}_2(\mathbb{Z})$, $\Gamma$ and $\Theta$. This extends to these functions the results obtained for the Brjuno function in Appendix 5 of \cite{MMY01}.
Let $G$ be a group and $G$ act on the left on a set $X$.
Let $A$ be an abelian ring, $A^*$ be the multiplicative group of invertible elements of $A$, and $M$ be a $A$-module.
A natural $G$-action on $M^X$ equipped with \emph{an automorphic factor} $\chi:G\times X \to A$ is given by
$$g.\varphi (x) = \chi(g^{-1},x)\varphi(g^{-1}.x) \text{ for all } g\in G, \varphi \in M^X, x\in X.$$
The automorphic factor should satisfies 
\begin{equation}\label{eq:auto}\chi(g_1g_0,x) = \chi(g_1,g_0x)\chi(g_0,x).\end{equation}
Then $M^X$ has a $\mathbb{Z}^{[G]}$-module structure.

Let $C^n:= C^n(G,M^X) = \mathrm{Map}(G,M^X)$. We consider the cochain complex 
$$\cdots \to 0\to C^0 ~\text{\raisebox{1ex}{$\underrightarrow{~~d^0~~}$}}~  C^1 ~\text{\raisebox{1ex}{$\underrightarrow{~~d^1~~}$}}~ C^2 ~\text{\raisebox{1ex}{$\underrightarrow{~~d^2~~}$}}~ \cdots  ~\text{\raisebox{1ex}{$\underrightarrow{~~d^{n-1}~~}$}}~C^n ~\text{\raisebox{1ex}{$\underrightarrow{~~d^n~~}$}}~ \cdots $$ defined by
\begin{align*}
(d^n \varphi) (g_0,\dots g_n) & = g_0.\varphi(g_1,\dots,g_n)+\sum_{i=0}^{n-1}(-1)^{i+1}\varphi(g_0,\dots,g_ig_{i+1},\dots , g_n)\\
& + (-1)^{n+1}\varphi(g_0,\dots,g_{n-1}).
\end{align*}
An $n$-coboundary is an element of the image of $d^{n-1}$.
An $n$-cocycle is an element of the kernel of $d^n$.
We identify $C^0$ with $M^X$.
If $f$ is a $1$-coboundary of $\varphi \in M^X$, then
\begin{equation}\label{eq:cob}
f(g)(x) = (d^0 \varphi)(g)(x) = g.\varphi(x) - \varphi(x) = \chi(g^{-1},x)\varphi(g^{-1}.x)-\varphi(x).
\end{equation}
A $1$-cocycle $c$ satisfies
$$(d^1 c)(g_0,g_1) = g_0.c(g_1)-c(g_0g_1)+c(g_0) = 0.$$
Thus, $c(g_0g_1) = c(g_0) + g_0.c(g_1)$.
Let $\check{c}(g) = c(g^{-1})$.
Then, we have
\begin{equation}\label{eq:coc}
\check{c}(g_0g_1;x)
 = \check{c}(g_1;x) + \chi(g_1,x)\check{c}(g_0;g_1.x).
 \end{equation}

\subsection{Semi-Brjuno function and the modular group action}
The modular group 
$\mathrm{PSL}_2(\mathbb{Z})$ is generated by $T(x)=x+1$ and $\sigma(x)=-1/x$.

\begin{prop}\label{pr:uni.b}
For $g\in \mathrm{PSL}_2(\mathbb{Z})\setminus\{I\}$ and $x_0\in\mathbb{R}\setminus\mathbb{Q}$,
there exist unique integer $r\ge 1$ and $g_1,\cdots,g_r \in\{T,T^{-1},\sigma\}$ such that $g= g_r\cdots g_1$ with $g_ig_{i-1}\not= I$, and 
\begin{align}
& \label{eq:rep.b}  x_{i-1}\in (-\infty,-1)\cup (0,1) \text{ if } g_i = \sigma, 
\end{align}
where $x_{i}=g_i g_{i-1} \cdots g_1x_0$.
\end{prop}

\begin{proof}
We have $\sigma = T\sigma T\sigma T$.
For $g=\sigma$, $r$ and $g_i$'s are determined as follows.
\begin{enumerate}
\item If $x_0\in (-1,-\frac{1}{2})$, then $r=5$ and $g_5\cdots g_1 = T\sigma T\sigma T$
since $T(x_0)\in (0,\frac{1}{2})$ and $T\sigma T(x_0)\in (-\infty,-1)$.
\item If $x_0\in (-\frac{1}{2},-\frac{1}{3})$, then $r = 9$ and $g_9(g_8\cdots g_4)g_3g_2g_1 = T(T\sigma T\sigma T )T\sigma T$
since $T\sigma T(x_0)\in (-1-\frac{1}{2})$.
\item Inductively, if $x_0\in (-\frac{1}{n},-\frac{1}{n+1})$, then $r = 4n+1$ and
$g_{r}(\prod_{i=4}^{r-1} g_i)g_3g_2g_1 = T(h_{r'}\cdots h_1)T\sigma T,$
where $h_{r'}\cdots h_1$ is the representation of $\sigma$ for $x\in(-\frac{1}{n-1},-\frac{1}{n})$.
\item For $x_0\in (n,n+1)$, by using $\sigma=\sigma^{-1}$, we have $r= 4n+1$ and 
$g_r\cdots g_1 = T^{-1}\sigma T^{-1}(h_1^{-1}\cdots h_{r'}^{-1})T^{-1}.$
\end{enumerate}

To show the uniqueness, assume that 
$g  = g_r\cdots g_1 = I.$
Let $i_1<i_2<\cdots<i_k$ be indices such that $g_{i_\ell}=\sigma$.
Since $1 = g'(x_0) = \prod_{i=1}^r g'_i(x_{i-1}) = \prod_{\ell=1}^k \frac{1}{x_{i_\ell-1}^2},$ we have
\begin{equation}\label{eq:rep.b3}
|x_{i_1-1}x_{i_2-1}\cdots x_{i_k-1}| = 1.
\end{equation}
If $|x_{i_\ell-1}|>1$, then $x_{i_\ell-1}<-1$ by Condition~\eqref{eq:rep.b}.
Then, $x_{i_\ell}\in (0,1)$ and $x_{i_{\ell+1}-1} = x_{i_\ell}\pm(i_{\ell+1}-i_\ell-1)\in \mathbb{R}\setminus(0,1)$.
By Condition~\eqref{eq:rep.b}, $x_{i_{\ell+1}-1}<-1$.
Since $g_{i-1}\cdots g_1 g_r \cdots g_i = I$ for all $i$, if $|x_{i_\ell-1}|>1$ for some $\ell$, then $|x_{i_\ell-1}|>1$ for all $\ell$.
By \eqref{eq:rep.b3}, it cannot happen, thus we have $|x_{i_\ell-1}|\le 1$ for all $\ell$. If $|x_{i_\ell-1}|<1$ for some $\ell$, then it contradicts to \eqref{eq:rep.b3}
Therefore $x_{i_\ell-1}=1$ for all $\ell$. It contradicts to $x_0$ is irrational.
Thus, there are no $g_i$'s such that $g_r\cdots g_1=I$.
\end{proof}

\begin{coro}\label{co:auto.b}
Let $A$ be an abelian ring. For two maps
$t:\mathbb{R}\setminus\mathbb{Q}\to A^\ast\text{ and }s:(0,1)\setminus\mathbb{Q}\to A^\ast,$
there exists a unique automorphic factor $\chi$ such that
\begin{equation*}\begin{matrix*}[l]
\chi(T,x) = t(x) & \text{for }x\in\mathbb{R}\setminus\mathbb{Q},  \vspace{0.3ex} \\
\chi(\sigma,x) = s(x) & \text{for }x\in(0,1)\setminus\mathbb{Q}.
\end{matrix*}\end{equation*}
\end{coro}

\begin{proof}
We define by 
\begin{equation}\begin{matrix*}[l] \label{eq:auto.b}
\chi(T^{-1},x) = (t(x-1))^{-1} & \text{for }x\in\mathbb{R}\setminus\mathbb{Q}, \text{ and } \vspace{0.3ex} \\
\chi(\sigma,x) = (s(-1/x))^{-1} & \text{for }x\in(-\infty,-1)\setminus\mathbb{Q}.
\end{matrix*}\end{equation}
Then it follows \eqref{eq:auto}.
From Proposition~\ref{pr:uni.b}, for any $g\in \mathrm{PSL}_2(\mathbb{Z})$ and $x_0\in \mathbb{R}\setminus\mathbb{Q}$, there exists $g_i$'s such that $g= g_r\cdots g_1$ following \eqref{eq:rep.b}, by \eqref{eq:auto}, if there exists such an automorphic factor $\chi$, then 
$$\chi(g,x_0) = \prod_{i=1}^r \chi(g_i,g_{i-1}\cdots g_1 x_0),$$
which means that $\chi$ is uniquely determined.

To show the existence, it is enough to show that 
$\chi(gh,x_0) = \chi(g,hx_0)\chi(h,x_0).$
Let 
\begin{equation}\label{eq:set.gh}
g=g_M\cdots g_1 \text{ and }h = h_N\cdots h_1
\end{equation}
be the representations of  $g$ for $hx_0$ and $h$ for $x_0$ following \eqref{eq:rep.b}, respectively.
From the uniqueness of the representations, the representation of $gh$ for $x_0$ is 
$gh = g_M\cdots g_{i+1} h_{N-i} \cdots h_1,$
where $g_\ell = h_{N-\ell}^{-1}$ for $1\le\ell\le i$.
From \eqref{eq:auto.b},  we have $\chi(g_{\ell},h_{N-\ell}y)\chi(h_{N-\ell},y)=1$ for $1\le \ell \le i$, where $y=h_{N-\ell-1}\cdots h_1x_0$.
\end{proof}

\begin{coro}\label{co:coc.b}
Let $A$ be an abelian ring, $\chi$ be an automorphic factor, $M$ be an $A$-module such that having the $\mathbb{Z}^{[\mathrm{PSL}_2(\mathbb{Z})]}$-module structure associated with $\chi$. For two maps
$\check{c}_T: \mathbb{R}\setminus\mathbb{Q} \to M$ and $\check{c}_\sigma:(0,1) \setminus\mathbb{Q}\to M,$
there exists a unique cocycle $\check{c}:\mathrm{PSL}_2(\mathbb{Z})\times (\mathbb{R}\setminus\mathbb{Q}) \to M$ such that
\begin{equation*}\begin{matrix*}[l]
 \check{c}(T;x)=\check{c}_T(x)& \text{for }x\in \mathbb{R}\setminus\mathbb{Q}, \text{ and } \vspace{0.4ex} \\
 \check{c}(\sigma;x)=\check{c}_\sigma(x) & \text{for }x\in (0,1)\setminus \mathbb{Q}.
\end{matrix*}
\end{equation*}
\end{coro}

\begin{proof}
We define by 
\begin{equation}\label{eq:coc.b2}\begin{matrix*}[l] 
\check{c}(T^{-1};x) = -\chi(T^{-1},x)\check{c}(T;x-1) & \text{for }x\in\mathbb{R}\setminus\mathbb{Q}, \text{ and } \vspace{0.4ex}\\
\check{c}(\sigma;x) = -\chi(\sigma,x)\check{c}(\sigma;-1/x) & \text{for }x\in (-\infty,-1)\setminus\mathbb{Q}.
\end{matrix*}\end{equation}
It follows \eqref{eq:coc}.
For any $g\in \mathrm{PSL}_2(\mathbb{Z})$ and $x_0\in\mathbb{R}$, there exist $g_i$'s in $\{T,T^{-1},\sigma\}$  such that  $g = g_r\cdots g_1$ following \eqref{eq:rep.b}, 
we have
$$\check{c}(g;x_0) = \sum_{i=1}^r \chi(g_{i-1}\cdots g_1, x_0) \check{c}(g_i;g_{i-1}\cdots g_1 x_0),$$
which means that $\check{c}$ is uniquely determined.

To show the existence, it is enough to show that $\check{c}$ satisfies
$\check{c}(gh;x_0) = \check{c}(h;x_0)+\chi(h,x_0)\check{c}(g;hx_0).$
With the same set up as \eqref{eq:set.gh}, from \eqref{eq:coc.b2} and \eqref{eq:coc}, we have 
$$\check{c}(g_{\ell};h_{N-\ell}y)+\check{c}(h_{N-\ell};y)=0\text{, where }y=h_{N-\ell-1}\cdots h_1x_0$$
then we are done.
\end{proof}

Let $A=\mathbb{R}$, $t(x)=1$, $s(x) = x^\nu$ on $(0,1)$ for $\nu\in\mathbb{R}$, $\check{c}_T(x) = 0$ and $\check{c}_\sigma(x) = -\log{x}$ on $(0,1)$.
There exist an automorphic factor $\chi$ and a cocycle $\check{c}$ such that
\begin{equation*}\begin{cases}
\chi(T^n,x) = 1 &\text{for }n\in \mathbb{Z},~x\in\mathbb{R}\setminus\mathbb{Q},\\
\chi(\sigma,x) = x^\nu &\text{for } x\in (0,1)\setminus\mathbb{Q},
\end{cases} \text{ and }
\begin{cases}
\check{c}(T;x) = 0 & \text{for }x\in\mathbb{R}\setminus\mathbb{Q},\\
\check{c}(\sigma;x) = -\log{x} &\text{for }x\in (0,1)\setminus\mathbb{Q}.
\end{cases}
\end{equation*}
From \eqref{eq:ft.eq.B}, $B_{0,\nu}(x) = B_{0,\nu}(x+1)$ for $\mathbb{R}\setminus\mathbb{Q}$ and \eqref{eq:cob}, we have
\begin{equation*}\begin{cases}
(d^0B_{0,\nu})(\sigma;x) = \chi(\sigma,x)B_{0,\nu}(\sigma x) - B_{0,\nu}(x) =  \log x & \text{for }x\in (0,1)\setminus\mathbb{Q},  \\
(d^0B_{0,\nu})(T;x) = \chi(T^{-1},x)B_{0,\nu}(T^{-1}(x)) - B_{0,\nu}(x)=0 & \text{for } x\in \mathbb{R}\setminus\mathbb{Q}.\\
\end{cases}\end{equation*}
By the uniqueness of the cocycle $\check{c}$, we have $\check{c} = d^0(-B_{0,\nu})$, i.e. the $1$-coboundary of $-B_{0,\nu}$ is a $1$-cocycle under the $\mathrm{PSL}_2(\mathbb{Z})$-action.

\subsection{OCF-Brjuno function and the $\Gamma$-action}
From now on, let us denote by $U(x) = -x$.
Let $\widetilde{\Gamma} = \Gamma\sqcup U \Gamma$, i.e.
$$\widetilde{\Gamma} =\left\{\begin{pmatrix}a&b\\c&d\end{pmatrix}\in\mathrm{PGL}_2(\mathbb{Z}):\begin{pmatrix}a&b\\c&d\end{pmatrix}\equiv 
\begin{pmatrix}1&0\\0&1\end{pmatrix}, \begin{pmatrix}1&1\\1&0\end{pmatrix}\text{ or } 
\begin{pmatrix}0&1\\1&1\end{pmatrix} \text{ (mod 2)}\right\}.$$
The group $\widetilde\Gamma$ is generated by 
$\tau(x) = x+2$, $\Delta(x) = 1-1/x$ and $U(x)=-x$.

\begin{prop}\label{pr:dec.o}
For all $g\in \widetilde\Gamma\setminus\{I\}$ and $x_0\in\mathbb{R}\setminus \mathbb{Q}$, there exist unique integer $r\ge 1$ and $g_1,\dots,g_r\in\{\tau, \tau^{-1},\Delta, \Delta^{-1},U\}$ such that $g=g_r\dots g_1$ with $g_ig_{i-1}\not =I$
\begin{equation}\label{eq:uni.o}\begin{matrix*}[l]
x_{i-1} \in (-1,1)=:J_U              & \text{if }g_i=U,                         \vspace{0.4ex} \\
x_{i-1} \in (1,\infty)=:J_\Delta   & \text{if }g_i=\Delta \text{ and } \vspace{0.4ex} \\
x_{i-1}\in(0,1)=:J_{\Delta^{-1}} & \text{if }g_i=\Delta^{-1}, 
\end{matrix*}\end{equation}
where $x_i:=g_ig_{i-1}\cdots g_1 x_0$.
\end{prop}

\begin{proof}
Since $\Delta^3=I$, $U = \tau^{\pm1} U\tau^{\pm1}$,  $\tau^{-1}\Delta(x) = -1-\frac{1}{x} = U \Delta U(x)$, and $(\tau^{-1}\Delta)^3=I$, 
we have the following relations:
\begin{align}
& \label{eq:dec.o0} \Delta = \Delta^{-2} \text{ and } \Delta^{-1} =\Delta^2, \\
& \label{eq:dec.o2} \Delta = \tau U \Delta U \text{ and } \Delta^{-1} = U \Delta^{-1} U \tau^{-1}, \\
& \label{eq:dec.o4} \Delta = \tau \Delta^{-1} \tau \Delta^{-1} \tau \text{ and }\Delta^{-1} = \tau^{-1}\Delta \tau^{-1} \Delta \tau^{-1}.
\end{align}

\begin{enumerate}
\item\label{eq:dec.o.U} 
If $g = U$ and $x_0\in (2n-1,2n+1)$ for $n\in \mathbb Z$, then $r=2n+1$ and 
$g_{2n+1}\cdots g_1 = \tau^{-n}U\tau^{-n}.$
\item\label{eq:dec.o6}
For $g = \Delta$ and $x_0\in (-\infty,-1)$, we will use \eqref{eq:dec.o2}.
Since $U x_0 \in (1,\infty) =J_\Delta$ and $\Delta U x_0 \in (\frac{1}{2},1)\subset J_U$, by \eqref{eq:dec.o.U}, for $x_0\in (-2n-1,-2n+1)$,  $n\in\mathbb{N}$, we have $r = 2n+4$ and $g_r\cdots g_1 = \tau U \Delta \tau^n U \tau^n.$
\item\label{eq:dec.o6-1}
For $g=\Delta^{-1}$ and $x_0 \in (1,2)$, we will use \eqref{eq:dec.o2}.
We have $\tau^{-1}x_0 \in (-1,0)=J_U$, $U\tau^{-1}x_0 \in (0,1)=J_{\Delta^{-1}}$ and $\Delta^{-1}U\tau^{-1}x_0 \in (1,\infty)$. 
Since $\Delta^{-1}U\tau^{-1}x_0 \in (2n-1,2n+1)$ for some $n\in\mathbb{N}$ if and only if $x_0\in \left(\frac{2n+2}{2n+1},\frac{2n}{2n-1}\right)$, 
by \eqref{eq:dec.o.U}, for $x_0\in \left(\frac{2n+2}{2n+1},\frac{2n}{2n-1}\right)$, we have $r=2n+4$ and 
$g_r\cdots g_1 = \tau^{-n}U\tau^{-n}\Delta^{-1}U\tau^{-1}.$
\item\label{eq:dec.o7} 
For $g=\Delta^{-1}$ and $x_0\in (-\infty,-1)$, we will use \eqref{eq:dec.o0} and \eqref{eq:dec.o6}.
By \eqref{eq:dec.o6}, we have a decomposition of $\Delta$ on $(-2n-1,-2n+1)$ as $\tau U \Delta \tau^n U \tau^n$.
Since $\Delta x_0 \in (1,2)\subset J_\Delta$, for $x\in (-2n-1,-2n+1)$, $n\in \mathbb{N}$, we have $r = 2n+5$ and 
$g_r\cdots g_1= \Delta \tau U \Delta \tau^n U \tau^n.$
\item\label{eq:dec.o8}
For $g= \Delta^{-1}$ and $x_0\in (3,\infty)$, we will use \eqref{eq:dec.o2}. 
By \eqref{eq:dec.o0} and \eqref{eq:dec.o2}, we have
$\Delta^{-1} U = \Delta(\Delta U) =\Delta \tau U \Delta$.
Thus, 
\begin{equation}\label{eq:dec.o8-1}
\Delta^{-1} = U(\Delta^{-1}U)\tau^{-1} = U \Delta \tau U \Delta\tau^{-1}.
\end{equation}
Since $\tau^{-1}x_0 \in (1,\infty)=J_\Delta$, $\Delta\tau^{-1}x_0 \in (0,1)\subset J_U$, $\tau U \Delta \tau^{-1} x_0 \in (1,2)\subset J_\Delta$, $\Delta \tau U \Delta \tau^{-1}x_0 \in (0,\frac{1}{2})\subset J_U$, we have $r = 6$ and 
$g_r\cdots g_1 = U \Delta \tau U \Delta \tau^{-1}.$
\item\label{eq:dec.o9}
For $g = \Delta$ and $x_0 \in (0,\frac{1}{2})$, we will use \eqref{eq:dec.o0} and \eqref{eq:dec.o6-1}.
Since $(0,\frac{1}{2})\subset J_{\Delta^{-1}}$ and $\Delta^{-1}(0,\frac{1}{2}) = (1,2)$, 
by using $\Delta = \Delta^{-1}\Delta^{-1} = (\tau^{-n}U\tau^{-n}\Delta^{-1}U\tau^{-1})\Delta^{-1}$, 
for $x\in \left(\frac{1}{2n+2},\frac{1}{2n}\right)$, we have $r=2n+5$ and
$g_r\cdots g_1 =  \tau^{-n}U\tau^{-n}\Delta^{-1}U\tau^{-1}\Delta^{-1}.$
\item\label{eq:dec.o10}
For $g=\Delta$ and $x_0\in (-\frac{1}{2},0)$, we will use \eqref{eq:dec.o8-1}.
By \eqref{eq:dec.o8-1}, we have $\Delta = \tau \Delta^{-1} U \tau^{-1} \Delta^{-1} U$.
Since $(-\frac{1}{2},0)\subset J_U$, $Ux_0 \in (0,\frac{1}{2})\subset J_{\Delta^{-1}}$, $\tau^{-1}\Delta^{-1}Ux_0 \in (-1,0)\subset J_U$ and $U\tau^{-1}\Delta^{-1}Ux_0\in (0,1)=J_{\Delta^{-1}}$, 
we have $r= 6$ and
$g_r\cdots g_1 = \tau \Delta^{-1}U\tau^{-1}\Delta^{-1} U.$
\end{enumerate}
In summary, we have a representation of $\Delta$ on $D_1:=(-\infty,-1)\cup (-\frac{1}{2},\frac{1}{2}) \cup (1,\infty)$ and
we have a representation of $\Delta^{-1}$ on $D_2:=(-\infty,-1)\cup (0,2)\cup (3,\infty)$ following \eqref{eq:uni.o}.

We will show that $\Delta$ has a representation on $(-1,-\frac{1}{2})\cup (\frac{1}{2},1)$ following \eqref{eq:uni.o} inductively.
If $\Delta$ has a representation on an interval $(a,b)\subset (-1,-\frac{1}{2})$, then $\Delta^{-1}$ has a representation on $(a,b)$ with
$\Delta^{-1} = \Delta^2$ since $\Delta(a,b)\subset (2,3)\subset D_1$.
Then, $\Delta$ has a representation on $J = \tau^{-1} \Delta \tau^{-1} (a,b)$ since $\Delta = \tau \Delta^{-1}\tau \Delta^{-1} \tau$ and $\tau J = \Delta \tau^{-1}(a,b) \subset (\frac{4}{3},\frac{7}{5})\subset D_2$.
Let us define a sequence $a_i$ and $b_i$ by
\begin{equation*}\begin{cases}
a_{i+1} = \tau^{-1}\Delta \tau^{-1} a_i = -\frac{1-a_i}{2-a_i}, \text{ where } a_0=-1,\\
b_{i+1} = - \frac{1-b_i}{2-b_i}, \text{ where }b_0 = -\frac{1}{2}.
\end{cases}\end{equation*}
Then, if $\Delta$ has a representation on $(a_i,a_{i+1})$ and $(b_{i+1},b_i)$, then $\Delta$ has a representation on $(a_{i+1},a_{i+2})$ and $(b_{i+2},b_{i+1})$.

For $g = \Delta$ and $x_0\in (-1,-\frac{2}{3})$, we have $\tau x_0 \in (1,\frac{4}{3})\subset D_1$ and 
$\tau \Delta^{-1} \tau x_0 \in (-\infty, -1)\subset D_1$. Thus, $\Delta$ on $(a_0,a_1)=(-1,-\frac{2}{3})$ has a representation by using \eqref{eq:dec.o4}.
For $g = \Delta^{-1}$ and $x_0\in (2,\frac{5}{2})$, we have $\tau^{-1}x_0\in (0,\frac{1}{2})\subset D_1$, $\Delta\tau^{-1}x_0 \in (-\infty, -1)$, $\tau^{-1}\Delta \tau^{-1}x_0 \in (-\infty, -3)\subset D_1$.
Thus, $\Delta^{-1}$ has a representation on $(2,\frac{5}{2})$ by \eqref{eq:dec.o4}.
By \eqref{eq:dec.o0}, $\Delta$ has a representation on $\Delta(2,\frac{5}{2}) = (\frac{1}{2},\frac{3}{5})$.
By \eqref{eq:dec.o2}, $\Delta$ has a representation on $(b_1,b_0)=(-\frac{3}{5},-\frac{1}{2})$.
Since $a_i\to - \frac{\sqrt{5}-1}{2}$ and $b_i\to -\frac{\sqrt{5}-1}{2}$ as $i\to \infty$, $\Delta$ has a representation on $(-1,-\frac{1}{2})$.
By using \eqref{eq:dec.o2}, we have a representation of $\Delta$ on $(\frac{1}{2},1)$.

Finally, by using \eqref{eq:dec.o0}, we can show that $\Delta^{-1}$ has a representation on $(-1,0)\cup (2,3)$.

To show the uniqueness,
we assume that there exist $g_i\in \{\tau,\tau^{-1},\Delta,\Delta^{-1},U\}$ for $1\le i\le r$ following \eqref{eq:uni.o} 
such that $g_r\cdots g_1 = I$.
Let $i_1<i_2<\cdots <i_k$ be the indices such that $g_{i_\ell}=\Delta$ or $\Delta^{-1}$.
Then we have 
\begin{equation}\label{eq:prod.o}
\prod_{i=1}^r g_i'(x_{i-1}) = 1.
\end{equation}
If $x_{i_\ell-1}>1$, then $x_{i_\ell} = 1-\frac{1}{x_{i_\ell-1}}\in (0,1)$.
Then, $g_{i_\ell+1} = U$, $\tau$ or $\tau^{-1}$.
If $g_{i_\ell+1} = \tau^{-1}$, then $g_j=\tau^{-1}$ for all $j> i_\ell$. Then $\ell=k$.
If $g_{i_\ell +1} = \tau$, then $g_{i_\ell+j} = \tau$ for all $j< i_{\ell+1}-i_\ell$. Then $x_{i_{\ell+1}-1}>1$ and $g_{i_{\ell+1}} = \Delta$.
If $g_{i_\ell+1} = U$ ,then $x_{i_\ell+1} \in (-1,0)$. Then $g_j=\tau^{-1}$ for all $j>i_\ell+1$.
Since $g_{i-1}\cdots g_1 g_r\cdots g_i=I$ for all $i$ is also following \eqref{eq:uni.o}, we conclude that if $x_{i_\ell-1}>1$ for some $\ell$, then $x_{i_\ell-1}>1$ and $g_{i_\ell}=\Delta$ for all $\ell$.
Then $\prod_{i=1}^r g_{i}'(x_{i-1}) = \prod_{\ell=1}^k x_{i_{\ell}-1}^{-2}$. It contradicts to \eqref{eq:prod.o}.
Therefore, $x_{i_\ell-1}\in (0,1)$ and $g_{i_\ell}=\Delta^{-1}$ for all $\ell$.
Thus, $\prod_{i=1}^r g_i'(x_{i-1}) = \prod_{\ell=1}^k (1-x_{i_\ell-1})^{-2} = 1.$
It contradicts to $|1-x_{i_\ell-1}|<1$.
\end{proof}

\begin{coro}\label{co:auto.o}
Let $A$ be an abelian ring. For three maps
$t:\mathbb{R}\setminus\mathbb{Q}\to A^\ast$, $s:(0,1)\setminus\mathbb{Q}\to A^\ast$ and $u:(0,1)\setminus\mathbb{Q}\to A^\ast,$
there exists a unique automorphic factor $\chi$ such that
\begin{equation*}\begin{matrix*}[l]
\chi(\tau,x) = t(x) & \text{for }x\in\mathbb{R}\setminus\mathbb{Q},\vspace{0.4ex} \\
\chi(\Delta^{-1},x) = s(x) & \text{for }x\in(0,1)\setminus\mathbb{Q}, \vspace{0.4ex} \\ 
\chi(U,x) = u(x) & \text{for }x\in (0,1)\setminus \mathbb{Q}.
\end{matrix*}\end{equation*}
\end{coro}

\begin{proof}
We define by 
\begin{equation*}\label{eq:auto.o}\begin{matrix*}[l]
\chi(\tau^{-1},x) =  (t(x-2))^{-1}              & \text{for }x\in\mathbb{R}\setminus\mathbb{Q},            \vspace{0.4ex} \\
\chi(U,x) = (u(-x))^{-1},                          & \text{for }x\in (-1,0)\setminus \mathbb{Q}, \text{ and } \vspace{0.4ex} \\
\chi(\Delta,x) = (s\left(1-1/x\right))^{-1}, & \text{for }x\in (1,\infty)\setminus\mathbb{Q}. 
\end{matrix*}\end{equation*}
From Proposition~\ref{pr:dec.o}, with the same arguments as in the proof of Corollary~\ref{co:auto.b}, we have the existence and uniqueness of such an automorphic factor.
\end{proof}

\begin{coro}\label{co:coc.o}
Let $A$ be an abelian ring, $\chi$ be an automorphic factor and $M$ be an $A$-module such that having the $\mathbb{Z}^{[\widetilde{\Gamma}]}$-module structure associated with $\chi$. For three maps
$\check{c}_\tau: \mathbb{R}\setminus\mathbb{Q} \to M$, $\check{c}_{\Delta^{-1}}:(0,1) \setminus\mathbb{Q}\to M$
 and $\check{c}_U: (0,1)\setminus \mathbb{Q} \to M,$
there exists a unique cocycle $\check{c}:\widetilde\Gamma\times (\mathbb{R}\setminus\mathbb{Q}) \to M$ such that
\begin{equation*}\begin{matrix*}[l]
\check{c}(\tau;x)=\check{c}_\tau(x)                         & \text{for }x\in \mathbb{R}\setminus\mathbb{Q}, \vspace{0.4ex} \\
\check{c}(\Delta^{-1};x)=\check{c}_{\Delta^{-1}}(x)  & \text{for }x\in (0,1)\setminus \mathbb{Q},         \vspace{0.4ex} \\
\check{c}(U;x)=\check{c}_U(x)                                & \text{for }x\in (0,1)\setminus \mathbb{Q}.
\end{matrix*}\end{equation*}
\end{coro}

\begin{proof}
We define by 
\begin{equation*}\label{eq:coc.b2}\begin{matrix*}[l]
\check{c}(\tau^{-1};x) =  -(t(x-2))^{-1}\check{c}_\tau(x-2) & \text{for }x\in\mathbb{R}\!\setminus\!\mathbb{Q},\vspace{0.4ex}\\
\check{c}(U;x) = -(u(-x))^{-1}\check{c}_U(-x)                   & \text{for }x\in(-1,0)\!\setminus\!\mathbb{Q}, \text{ and } \vspace{0.4ex} \\
\check{c}(\Delta;x) = -(s\left(1-1/x\right))^{-1}\check{c}_{\Delta^{-1}}\!\left(1-1/x\right) & \text{for }x\in (1,\infty)\!\setminus\!\mathbb{Q}. \\
\end{matrix*}\end{equation*}
From Proposition~\ref{pr:dec.o}, with the same arguments as in the proof of Corollary~\ref{co:coc.b}, we have the existence and the uniqueness of such a cocycle.
\end{proof}

Let $A = \mathbb{R}$, $t(x) = 1$, $s(x) = x^\nu$ on $(0,1)$ for $\nu \in \mathbb{R}$, $u(x) = 1$, $\check{c}_{\tau}(x)=0$, $\check{c}_{\Delta^{-1}}(x) = -\log x$ on $(0,1)$ and $\check{c}_U(x)=0$ on $(0,1)$.
There exist a unique automorphic factor $\chi$ and a unique cocycle $\check{c}$ such that
\begin{align*}
\begin{cases}
\chi(\tau^n,x) = 1 & \text{for } n\in \mathbb{Z},~ x\in \mathbb{R}\setminus\mathbb{Q}, \\
\chi(\Delta^{-1}, x) = x^\nu & \text{for } x\in (0,1)\setminus\mathbb{Q}, \\
\chi(U,x) = 1 & \text{for }x\in (0,1)\setminus\mathbb{Q}, 
\end{cases}
\text{ and }
\begin{cases}
\check{c}(\tau;x) = 0 & \text{for } x\in\mathbb{R}\setminus\mathbb{Q},\\
\check{c}(\Delta^{-1};x) = -\log x & \text{for }x\in (0,1)\setminus\mathbb{Q},\\
\check{c}(U;x) = 0 & \text{for } x\in (0,1)\setminus \mathbb{Q}.
\end{cases}
\end{align*}
By \eqref{eq:even2Z}, \eqref{eq:fun.o} and \eqref{eq:cob}, the $1$-coboundary of $B_{odd,\nu}$ satisfies
\begin{align*}\begin{cases}
(d^0 B_{odd,\nu})(\Delta^{-1};x) = \chi(\Delta,x) B_{odd,\nu}(\Delta x) - B_{odd,\nu}(x) = \log x & \text{for }x\in (0,1)\setminus\mathbb Q,\\
(d^0 B_{odd,\nu})(U;x) = \chi(U,x) B_{odd,\nu}(U x) - B_{odd,\nu}(x) = 0 & \text{for }x\in (0,1)\setminus\mathbb Q,\\
(d^0 B_{odd,\nu})(\tau;x) = \chi(\tau^{-1},x)B_{odd,\nu}(\tau^{-1}x)-B_{odd,\nu}(x) = 0 & \text{for }x\in\mathbb R\setminus \mathbb Q.
\end{cases}
\end{align*}
Thus $\check{c} = d^0(-B_{odd,\nu})$, i.e. the $1$-coboundary of $-B_{odd,\nu}$ is a $1$-cocycle under the $\widetilde\Gamma$-action.

\subsection{ECF-Brjuno function and the $\Theta$-action}
Let $\widetilde{\Theta} = \Theta \sqcup U\Theta$ and $\Theta$, i.e.
$$\widetilde{\Theta} = \left\{\begin{pmatrix}a&b\\c&d\end{pmatrix}\in \mathrm{PGL}_2(\mathbb{Z}):\begin{pmatrix}a&b\\c&d\end{pmatrix}\equiv \begin{pmatrix}1&0\\0&1\end{pmatrix}\text{ or }\begin{pmatrix}0&1\\1&0\end{pmatrix}\text{ mod }2\right\}.$$
The group $\widetilde\Theta$ is generated by $\tau(x)=x+2$, $\sigma(x)=-1/x$ and $U(x)=-x$.

\begin{prop}
For $g\in\Theta\setminus\{I\}$ and $x_0\in \mathbb{R}\setminus\mathbb{Q}$, there exist unique integer $r\ge 1$ and $g_1,\cdots,g_r\in\{\tau,\tau^{-1},\sigma,U\}$ such that $g= g_r\cdots g_1$ with $g_ig_{i-1}\not =1$, and
\begin{equation}\label{eq:dec.e}\begin{matrix*}[l]
x_{i-1}\in(-\infty,-1)\cup (0,1) & \text{if } g_i=\sigma, \text{ and } \vspace{0.4ex} \\
x_{i-1}\in (-1,1)                      & \text{if } g_i=U,
\end{matrix*}\end{equation}
where $x_i=g_ig_{i-1}\cdots g_1 x_{0}$.
\end{prop}
\begin{proof}
We have $U = \tau^{\pm 1} U \tau^{\pm 1}$ and $\sigma U = U \sigma$. Thus,
\begin{enumerate}
\item If $g=U$ and $x_0\in (2n-1,2n+1)$ for $n\in\mathbb{Z}$, then we have $r=2n+1$ and
$g_r\cdots g_1 = \tau^{-n}U\tau^{-n}.$
\item For $g=\sigma$ and $x_0\in (1,\infty)$, if $x_0\in (2n-1,2n+1)$ for $n\in \mathbb{N}$, we have $r=2n+3$ and
$g_r\cdots g_1 = U\sigma \tau^{-n}U\tau^{-n}.$
\item For $g=\sigma$ and $x_0\in (-1,0)$, if $x_0 \in \left(-\frac{1}{2n-1},-\frac{1}{2n+1}\right)$ for $n\in \mathbb{N}$, we have $r=2n+3$ and $g_r\cdots g_1 = \tau^n U\tau^n\sigma U.$
\end{enumerate}

To show the uniqueness, let us assume that there are $g_i$, $i=1,\dots,r$, such that $g_r\cdots g_1=I$ and $r$ is minimal.
Let $i_1<i_2<\cdots <i_k$ be indices $i$ such that $g_i=\sigma$.
For given $x_0\in \mathbb{R}\setminus \mathbb{Q}$, let us denote by $x_i = g_i x_{i-1}$.
Note that $\prod_{i=1}^r g_i'(x_{i-1}) = g'(x_0) = 1$.
Since $g_j'(x) =1$ if $g_j = \tau^{\pm1}$ and $g_{i_\ell}'(x) = x^{-2}$, we have $\prod_{\ell=1}^k |x_{i_\ell-1}| = 1$.

If $x_{i_\ell-1}<-1$, then $x_{i_\ell} = -\frac{1}{x_{i_\ell-1}}\in (0,1)$.
Then, $g_{i_\ell} = \tau$, $\tau^{-1}$ or $U$.
If $g_{i_\ell}=\tau$, then $g_{i_\ell+j}=\tau$ for all $j\ge 1$, thus $\ell=k$.
If $g_{i_\ell}=U$, then $x_{i_\ell}=\frac{1}{x_{i_\ell-1}}\in (-1,0)$.
Then $g_{i_\ell+j} = \tau^{\pm}$ for $j\le i_{\ell+1}-i_\ell-1$ and $|x_{i_{\ell+1}-1}|>1$.
If $g_{i_\ell}=\tau^{-1}$, then $g_{i_{\ell}+j}=\tau^{-1}$ for $j\le i_{\ell+1}-i_\ell-1$ and $x_{i_{\ell+1}-1}<-1$.
Since $g_{i-1}\cdots g_1 g_r\cdots g_i=I$ for all $i$, if $|x_{i_\ell-1}|>1$ for some $\ell$, then $|x_{i_\ell-1}|>1$ for all $\ell$.
It contradicts to $\prod_{\ell=1}^k|x_{i_\ell-1}|=1$.

We have $|x_{i_\ell-1}|\le 1$ for all $\ell$. If $|x_{i_\ell-1}|<1$ for some $\ell$, then it contradicts to $\prod_{\ell=1}^k |x_{i_\ell-1}| = 1.$
Therefore $x_{i_\ell-1}=1$ for all $\ell$. It contradicts to $x_0$ is irrational.
Thus, there are no $g_i$'s such that $g_r\cdots g_1=I$.
\end{proof}

\begin{coro}\label{co:auto}
Let $A$ be an abelian ring.
For three maps $t:\mathbb{R}\setminus\mathbb{Q}\to A^*,~ s:(0,1)\setminus\mathbb{Q}\to A^* \text{ and } u: (0,1)\setminus\mathbb{Q} \to A^*,$ there exists a unique automorphic factor $\chi$ such that
\begin{equation*}\begin{matrix*}[l]
\chi(\tau,x) = t(x)       & \text{for }x\in  \mathbb{R}\setminus \mathbb{Q}, \vspace{0.4ex} \\
\chi(\sigma,x) = s(x)  & \text{for }x\in (0,1)\setminus\mathbb{Q},             \vspace{0.4ex} \\
\chi(U,x) = u(x)         & \text{for }x\in (0,1)\setminus \mathbb{Q}.
\end{matrix*}\end{equation*}
\end{coro}

\begin{proof}
We define by
\begin{equation*}\label{eq:auto.e} \begin{matrix*}[l]
\chi(\tau^{-1},x) = (t(x-2))^{-1}          & \text{for }x\in \mathbb{R}\setminus\mathbb{Q},                 \vspace{0.4ex} \\
\chi(\sigma,x) = (\sigma(-1/x))^{-1}  & \text{for } x\in (-\infty,-1)\setminus\mathbb{Q}, \text{ and } \vspace{0.4ex} \\
\chi(U,x) = (u(-x))^{-1}                      & \text{for }x\in (-1,0)\setminus \mathbb{Q}.
\end{matrix*}\end{equation*}
By the same arguments as in the proof of Corollary~\ref{co:auto.b}, we have the existence and the uniqueness of such an automorphic factor.
\end{proof}

\begin{coro}
Let $A$ be an abeilan ring, $\chi$ be an automorphic factor, $M$ be an $A$-module such that $M^X$ having the $\mathbb Z^{[\widetilde\Theta]}$-module structure associated with $\chi$.
For three maps 
$\check{c}_\tau : \mathbb{R}\setminus\mathbb{Q}\to M,$ $\check{c}_\sigma: (0,1)\setminus\mathbb{Q} \to M$ and 
$\check{c}_U:(0,1)\setminus\mathbb{Q}\to M,$
there exists a unique cocycle $\check{c}: \widetilde\Theta \times (\mathbb{R}\setminus\mathbb{Q}) \to M$ such that 
\begin{equation*}\begin{matrix*}[l]
\check{c}(\tau;x) = \check c_\tau(x)          & \text{for } x\in \mathbb{R}\setminus\mathbb{Q}, \vspace{0.4ex} \\
\check{c}(\sigma;x) = \check c_\sigma(x) & \text{for } x\in (0,1)\setminus\mathbb{Q},           \vspace{0.4ex} \\
\check{c}(U;x) = \check c_U(x)                 & \text{for }x\in (0,1)\setminus \mathbb{Q}.
\end{matrix*}\end{equation*}
\end{coro}

\begin{proof}
We define by
\begin{equation*}\begin{matrix*}[l]
\check{c}(\tau^{-1};x) = -\chi(\tau^{-1},x)\check{c}_{\tau}(x-2) & \text{for }x\in\mathbb{R}\setminus\mathbb{Q}, \vspace{0.4ex} \\
\check{c}(\sigma;x) = -\chi(\sigma;x) \check{c}_{\sigma}(-1/x)   & \text{for }x\in (-\infty,-1)\setminus \mathbb{Q}, \vspace{0.4ex} \\
\check{c}(U;x) = -\chi(U,x) \check{c}_{U}(-x)           & \text{for }x\in (-1,0)\setminus\mathbb{Q}.
\end{matrix*}\end{equation*}
With the same arguments of the proof of Corollary~\ref{co:coc.b}, we have the existence and the uniqueness of such a cocycle.
\end{proof}

Let $A = \mathbb{R}$, $t(x) = 1$, $s(x) = x^\nu$ on $(0,1)$ for $\nu \in \mathbb{R}$, $u(x) = 1$ on $(0,1)$, $\check{c}_\tau(x)=0$, $\check{c}_\sigma(x) = -\log x $ on $(0,1)$ and $\check{c}_U(x) = 0$ on $(0,1)$.
There exist a unique automorphic factor $\chi$ and a unique cocycle $\check{c}$ such that
\begin{align*}
\begin{cases}
\chi(\tau^n,x) = 1 & \text{for } n\in \mathbb{Z},~ x\in \mathbb{R}\setminus\mathbb{Q}, \\
\chi(\sigma, x) = x^\nu & \text{for } x\in (0,1)\setminus\mathbb{Q}, \\
\chi(U,x) = 1 & \text{for }x\in (0,1)\setminus\mathbb{Q}, 
\end{cases}
\text{ and }
\begin{cases}
\check{c}(\tau;x) = 0 & \text{for } x\in\mathbb{R}\setminus\mathbb{Q},\\
\check{c}(\sigma;x) = -\log x & \text{for }x\in (0,1)\setminus\mathbb{Q},\\
\check{c}(U;x) = 0 & \text{for } x\in (0,1)\setminus \mathbb{Q}.
\end{cases}
\end{align*}
By \eqref{eq:even2Z}, \eqref{eq:fun.e} and \eqref{eq:cob}, the $1$-coboundary of $B_{even,\nu}$ satisfies
\begin{align*}\begin{cases}
(d^0 B_{even,\nu})(\sigma;x) = \chi(\sigma,x) B_{even,\nu}(\sigma x) - B_{even,\nu}(x) = \log x & \text{for }x\in (0,1)\setminus\mathbb Q,\\
(d^0 B_{even,\nu})(U;x) = \chi(U,x) B_{even,\nu}(U x) - B_{even,\nu}(x) = 0 & \text{for }x\in (0,1)\setminus\mathbb Q,\\
(d^0 B_{even,\nu})(\tau;x) = \chi(\tau^{-1},x)B_{even,\nu}(\tau^{-1}x)-B_{even,\nu}(x) = 0 & \text{for }x\in\mathbb R\setminus \mathbb Q.
\end{cases}
\end{align*}
Thus $\check{c} = d^0(-B_{even,\nu})$, i.e. the $1$-coboundary of $-B_{even,\nu}$ is a $1$-cocycle under the $\widetilde\Theta$-action.


\begin{thebibliography}{60}

\bibitem[BG01]{BG01}{
A. Berretti and G. Gentile,
{\it Bryuno function and the standard map},
Comm. Math. Phys.
{\bf 220}(3) (2001) 623--656.
}

\bibitem[BM18]{BM18}{
  {F. Boca and C. Merriman},
  {\it Coding of geodesics on some modular surfaces and applications to odd and even continued fractions},
  {Indag. Math. (N.S.)},
  {\bf 29}(5) {(2018)} {1214--1234}.
}

\bibitem[B\"{o}t1904]{Bot1904}{
L.E. B\"{o}ttcher,
{\it Main laws of convergence of iterations and their analytic applications (Russian)}, 
Kasan Ges. (2) 
{\bf 13}, Nr. 1, 1--37; {\bf 14}, Nr. 2, 155--200; Nr. 3, 201--234 (1904).
}

\bibitem[Brj71]{Brj71}{
  {A.D. Brjuno}, 
  {\it Analytic form of differential equations. I (Russian)}, 
  {Tr. Mosk. Mat. Obs.}, 
  {\bf 25} (1971) {119--262}. 
}

\bibitem[Brj72]{Brj72}{
  {A.D. Brjuno}, 
  {\it Analytic form of differential equations. II (Russian)}, 
  {Tr. Mosk. Mat. Obs.}, 
  {\bf 26} (1972) {199--239}.
}

\bibitem[BC04]{BC04} 
X. Buff and A. Chéritat, 
{\it Upper bound for the size of quadratic Siegel disks}, 
Invent. Math. {\bf 156}(1) (2004) 1--24.

\bibitem[BC06]{BC06}{
  {X. Buff and A. Ch\'{e}ritat}, 
  {\it The Brjuno function continuously estimates the size of quadratic Siegel disks},
  {Ann. Math.} 
  {\bf 164}(1) (2006) 265--312.
}


 {
\bibitem[CT13]{CT13}{
C. Carminati and G. Tiozzo, 
{\it Tuning and plateaux for the entropy of $\alpha$-continued fractions},
Nonlinearity {\bf 26}(4) (2013) 1049--1070.
}

\bibitem[CMPT10]{CMPT10}{
C. Carminati, S. Marmi, A. Profeti and G. Tiozzo, 
{\it The entropy of $\alpha$-continued fractions: numerical results},
Nonlinearity {\bf 23}(10) 2429--2456.
}
}

\bibitem[CM22]{CM22}{
C. Chavaudret and S. Marmi
{\it Analytic linearization of a generalization of the semi-standard map: Radius of convergence and Brjuno sum},
Discrete and Continuous Dynamical Systems, {\bf 42}(7) (2022) 3077-3101.
}

\bibitem[Cel09]{Cel09}{
  {F. Cellarosi},
  {\it Renewal-type limit theorem for continued fractions with even partial quotients},
  {Ergod. Th. \& Dynam. Sys.},
  {\bf 29}(5) (2009) 1451--1478.
}

\bibitem[Ch\'{e}08]{Che08}{
  {A. Chéritat}, 
  {\it Sur l'implosion parabolique, la taille des disques de Siegel et une conjecture de Marmi, Moussa et Yoccoz}, 
  {PhD Thesis. Université Paul Sabatier-Toulouse III} (2008).
}

\bibitem[CC15]{CC15}{
D. Cheraghi and A. Chéritat,
{\it A proof of the {M}armi--{M}oussa--{Y}occoz conjecture for rotation numbers of high type},
Invent. Math.
{\bf 202}(2) (2015) 677--742.
}

\bibitem[Dav94]{Dav94}{
A.M. Davie,
{\it The critical function for the semistandard map},
Nonlinearity
{\bf 7}(1) (1994) 219.
}

\bibitem[Gen15]{Gen15}{
G. Gentile,
{\it Invariant curves for exact symplectic twist maps of the cylinder with {B}ryuno rotation numbers},
Nonlinearity
{\bf 28}(7) (2015) 2555.
}

\bibitem[Ger71]{Ger71}{
J. Gerver, {\it More on the differentiability of the Riemann function},
American Journal of Mathematics,
{\bf 93} (1) (1971)
33--41
}

\bibitem[Har03]{Har03}{
  {Y. Hartono}, 
  {\it Ergodic Properties of Continued Fraction Algorithms}, 
  Ph.D thesis, Delft University Press, 2003.
}

\bibitem[HK02]{HK02}{
  {Y. Hartono and C. Kraaikamp}, 
  {\it On continued fractions with odd partial quotients}, 
  {Rev. Roumaine Math. Pures Appl.},
  {\bf 47}(1) (2002) 1--15.
}

\bibitem[Ita81]{Ita81}{
S. Itatsu, 
{\it Differentiability of Riemann's function},
{Proceedings of the Japan Academy, Series A, Mathematical Sciences},
{\bf 57}(10) (1981), 
{492--495}
}


\bibitem[Jaf96]{Jaf96}{
S. Jaffard, {\it The spectrum of singularities of Riemann's function},
{Revista matem{\'a}tica iberoamericana},
{\bf 12}(2) (1996),
{441--460}
}

\bibitem[Kei17]{Kei17}{
{A. Keita},
{\it Continued fractions and parametric geometry of numbers},
{J. Théor. Nombres Bordeaux},
{\bf 29}(1) (2017) {129--135}.
}

\bibitem[KMS16]{KMS16}{
{M. Kesseb{\"o}hmer, S. Munday and B.O. Stratmann},
{\it Infinite ergodic theory of numbers},
{Walter de Gruyter GmbH \& Co KG},
{2016}.
}

\bibitem[KLL22]{KLL22}{
{D. Kim, S. Lee and L. Liao},
{\it Odd-odd continued fraction algorithm}, 
Monatshefte für Mathematik 
{\bf 198}(2) (2022) 323-344.
}

\bibitem[K{\oe}84]{Koe84}{
{G. K{\oe}nigs}, 
{\it Recherches sur les intégrals de certaines équations fonctionnelles}, 
Ann. Sci. Ecole Norm. Sup. 
{\bf 1} (1884) 1–41.
}

\bibitem[Kra91]{Kra91}{
   {C. Kraaikamp}, 
   {\it A new class of continued fraction expansions}, 
   {Acta Arith}, 
   {\bf 57}(1) (1991) 1--39.
}

\bibitem[KL96]{KL96}{
{C. Kraaikamp and A. Lopes},
{\it The theta group and the continued fraction expansion with even partial quotients},
{Geom. Dedicata},
{\bf 59}(3) (1996) {293--333}.
}

\bibitem[KN01]{KN01}{
{C. Kraaikamp and H. Nakada},
{\it On a problem of Schweiger concerning normal numbers},
{J. Number Theory},
{\bf 86}(2) (2001) {330--340}
}

\bibitem[LM08]{LM08}{
L. Luzzi and S. Marmi,
{\it On the entropy of Japanese continued fractions},
Discrete Contin. Dyn. Syst, {\bf 20} (2008) 673--711.
}

\bibitem[LMPS]{LMPS}{
{S. Lee, S. Marmi, I. Petrykiewicz and T.I. Schindler}, 
{\it Regularity properties of $ k $-Brjuno and Wilton functions}, preprint (available at \href{https://arxiv.org/abs/2106.07298}{arXiv:2106.07298}).
}

\bibitem[LMNN10]{LMNN10}{
{L. Luzzi, S. Marmi, H. Nakada and R. Natsui},
{\it Generalized Brjuno functions associated to $\alpha$-continued fractions},
{J. Approx. Theory},
{\bf 162}(1) (2010) {24--41}.
}

\bibitem[Pet14]{Pet14}
I. Petrykiewicz, 
{\it Note on the differentiability of Fourier series arising from Eisenstein series}, 
C. R. Math. Acad. Sci. Paris 
{\bf 352}(4) (2014) 273--276.

\bibitem[Pet17]{Pet17}
I. Petrykiewicz,
{\it Differentiability of Fourier series arising from Eisenstein series},
Ramanujan J. 
{\bf 42}(3) (2017) 527--581.

\bibitem[Mac88]{Mac88}{
R.S. MacKay,
{\it Exact results for an approximate renormalisation scheme and some predictions for the breakup of invariant tori},
Phys. D,
{\bf 33}(1-3) (1988) 240--265.
}

\bibitem[Mac89]{Mac89}{
R.S. MacKay,
{\it Erratum: ``{E}xact results for an approximate renormalisation scheme and some predictions for the breakup of invariant tori''},
Phys. D,
{\bf 36}(3) (1989) 358.
}

\bibitem[Mar90]{Mar90}{
{S. Marmi},
{\it Critical functions for complex analytic maps},
{J. Phys. A: Math. Gen.}
{\bf 23}(15) (1990) 3447--3474.
}

\bibitem[MMY97]{MMY97}{
{S. Marmi, P. Moussa and J-C. Yoccoz},
{\it The Brjuno functions and their regularity properties},
{Comm. Math. Phys.},
{\bf 186}(2) (1997) {265--293}.
}

\bibitem[MMY01]{MMY01}{
{S. Marmi, P. Moussa and J-C. Yoccoz},
{\it Complex Brjuno functions},
{J. Amer. Math. Soc.},
{\bf 14}(4) (2001) {783--841}.
}

\bibitem[MMY06]{MMY06}{
{S. Marmi, P. Moussa, and J.-C. Yoccoz}, 
{\it Some properties of real and complex Brjuno functions},
In Frontiers in number theory, physics, and geometry. I, pages 601--623. Springer, Berlin, 2006.
}

\bibitem[MS92]{MS92}{
S. Marmi and J. Stark,
{\it On the standard map critical function},
Nonlinearity
{\bf 5}(3) (1992) 743.
}

\bibitem[Mil90]{Mil90}{
{J. Milnor},
{\it Dynamics in one complex variable: Introductory lectures},
(1990), {arXiv preprint math/9201272}.
}

\bibitem[Nak81]{Nak81}{
{H. Nakada}, 
{\it Metrical theory for a class of continued fraction transformations and their natural extensions}, 
{Tokyo J. Math.} 
{\bf 4}(2) (1981) {399-426}.
}

\bibitem[PW99]{PW99}{
{M. Pollicott and H. Weiss},
{\it Multifractal Analysis of {L}yapunov exponent for continued fraction and {M}anneville-{P}omeau transformations and applications to {D}iophantine approximation},
{Comm. Math. Phys.},
{\bf 207} (1999) {145--171}.
 }

\bibitem[RS15]{RS15}{
{T. Rivoal and S. Seuret},
{\it Hardy-Littlewood series and even continued fractions},
{J. Anal. Math.}
{\bf 125}(1) (2015) {175--225}
}

\bibitem[Rom08]{Rom08}{
{D. Romik},
{The dynamics of Pythagorean triples},
{\it Trans. Amer. Math. Soc.},
{\bf 360}(11) (2008) {6045--6064}.
}

\bibitem[Sch82]{Sch82}
  {F. Schweiger},
  {\it Continued fractions with odd and even partial quotients},
  {Arbeitsber. Math. Inst. Univ. Salzburg},
  {\bf 4} (1982) {59--70}.


\bibitem[Sch84]{Sch84}{
{F. Schweiger},
{\it On the approximation by continued fractions with odd and even partial quotients},
{Arbeitsber. Math. Inst. Univ. Salzburg},
{\bf 1}(2) (1984) {105--114}.
}

\bibitem[Ser85]{Ser85}{
C. Series, 
{\it The modular surface and continued fractions}, 
J. London Math. Soc.},
{\bf s2-31}(1) (1985) {69--80}.

\bibitem[Sie42]{Sie42}{
{C.L. Siegel}, 
{\it Iteration of analytic functions}, 
{Ann. Math.}, 
{\bf 43}(4) (1942) {607--612}.
}

\bibitem[SW14]{SW14}{
  I. Short and M. Walker,
  {\it Even-integer continued fractions and the Farey tree},
  in {Symmetries in Graphs, Maps, and Polytopes Workshop}
  {(Springer Proc. in Math. Stat. 159)},
  {pp. 287--300}, Springer, 2014.
}

\bibitem[Val03]{Val03}{
B. Vall\'{e}e, 
{\it Dynamical analysis of a class of Euclidean algorithms}, 
Theor. Comput. Sci.
{\bf 297} (2003) 447--486.
}

\bibitem[Val06]{Val06}{
B. Vall\'{e}e, 
{\it Euclidean dynamics}, 
Discrete Contin. Dyn. Syst.
{\bf 15} (2006) 281--352.
}

\bibitem[Yoc88]{Yoc88}{
{J.-C. Yoccoz}, 
{\it Linéarisation des germes de difféomorphismes holomorphes de $(\mathbb{C},0)$},
{C. R. Acad. Sci. Paris Sér. I Math.} 
{\bf 306}(1) (1988) 55--58.
}

\bibitem[Yoc95]{Yoc95}{
{J.-C. Yoccoz}, 
{\it Théorème de Siegel, nombres de Bruno et polynômes quadratiques (French)},
{Astérisque},
{\bf 231} (1995) 3--88.
}

\bibitem[Yoc02]{Yoc02}{
J.-C. Yoccoz,
{\it Analytic linearization of circle diffeomorphisms, Dynamical systems and small divisors},
125--173, 
Springer, 
2002
}

\end{thebibliography}
\end{document}